\pgfplotsset{compat=1.16} 
\tikzset{every picture/.style={thick,>=angle 60}}
\tikzset{MDPrand/.style={draw,circle,minimum size=11*1.5,inner sep=0}}
\tikzset{MDPcont/.style={draw,rectangle,minimum size=9*1.5,inner sep=0}}
\tikzset{MDPbad/.style={fill=red}}
\definecolor{myorange}{RGB}{255, 128, 0}
\newcommand{\+}[1]{\mathbb{#1}}
\newcommand{\N}{\+{N}}
\newcommand{\x}{\times}
\newcommand{\rsymbol}{\ocircle}
\newcommand{\zsymbol}{\Box}
\newcommand{\zstates}{\states_\zsymbol}
\newcommand{\rstates}{\states_\rsymbol}
\newcommand{\reachset}{T}
\newcommand{\eqby}[2][=]{\stackrel{\text{{\tiny{#2}}}}{#1}}
\newcommand{\eqdef}{\eqby{def}}
\newcommand{\eps}{\varepsilon}
\newcommand{\step}[2][]{\xrightarrow[#1]{#2}}
\newcommand{\problemx}[3]{
\par\noindent\underline{\sc#1}\par\nobreak\vskip.2\baselineskip
\begingroup\clubpenalty10000\widowpenalty10000
\setbox0\hbox{\bf INPUT:\ }\setbox1\hbox{\bf QUESTION:\ }
\dimen0=\wd0\ifnum\wd1>\dimen0\dimen0=\wd1\fi
\vskip-\parskip\noindent
\hbox to\dimen0{\box0\hfil}\hangindent\dimen0\hangafter1\ignorespaces#2\par
\vskip-\parskip\noindent
\hbox to\dimen0{\box1\hfil}\hangindent\dimen0\hangafter1\ignorespaces#3\par
\endgroup}
\newcommand{\dist}{\mathcal{D}}
\newcommand{\cParity}[1]{{#1}\text{-}\mathtt{Parity}}
\newcommand{\Parity}[1]{\mathtt{Parity}(#1)}
\newcommand{\reach}[1]{\mathtt{Reach}(#1)}
\newcommand{\transient}{\mathtt{Transience}}
\newcommand{\safety}[1]{\mathtt{Safety}(#1)}
\newcommand{\always}{{\sf G}}
\newcommand{\eventually}{{\sf F}}
\renewcommand{\next}{{\sf X}}
\newcommand{\hide}[1]{}
\newcommand{\lrc}[1]{(#1)}
\newcommand{\lrd}[1]{\{#1\}}
\newcommand{\ignore}[1]{}
\newcommand{\nat}{\mathbb N}
\newcommand{\setcomp}[2]{\lrd{{#1}|\;{#2}}}
\newcommand{\tuple}[1]{\lrc{#1}}
\newcommand{\denotationof}[2]{\llbracket #1\rrbracket^{#2}}
\newcommand{\mdp}{{\mathcal M}}
\newcommand{\mdptuple}{\tuple{\states,\zstates,\rstates,\transition,\probp}}
\newcommand{\states}{S}
\newcommand{\initstates}{I}
\newcommand{\stateset}{Q}
\newcommand{\state}{s}
\newcommand{\transition}{{\longrightarrow}}
\newcommand{\probp}{P}
\newcommand{\smallparg}[1]{{\bf #1}}
\newcommand{\complementof}[1]{\overline{#1}}
\newcommand{\play}{\rho}
\newcommand{\playset}{{\mathfrak R}}
\newcommand{\partialplay}{\rho}
\newcommand{\zstrat}{\sigma}
\newcommand{\zallstrats}[1]{\zstratset_{{#1}}}
\newcommand{\zstratset}{\Sigma}
\newcommand{\memory}{{\sf M}}
\newcommand{\updatefun}{u}
\newcommand{\memconf}{{\sf m}}
\newcommand{\expectval}{{\mathcal E}}
\newcommand{\probm}{{\mathcal P}}
\newcommand{\expectation}[1][]{ \expectval_{#1}}
\newcommand{\formula}{{\varphi}}
\newcommand{\colorset}[3]{[#1]^{\coloring#2#3}}
\newcommand{\coloring}{{\mathit{C}ol}}
\newcommand{\colorof}[1]{\coloring\lrc{{#1}}}
\newcommand{\cset}{{\mathcal C}}
\newcommand{\valueof}[2]{{\mathtt{val}_{#1}(#2)}}
\newcommand{\optval}{v^*}
\newcommand{\constraint}{\rhd}
\mathchardef\mhyphen="2D 
\newcommand{\bubble}[2]{{\sf bubble}_{#1}(#2)}
\newcommand{\firstinset}[1]{{\sf firstin}(#1)}
\newcommand{\reset}{\mbox{\upshape\texttt{B\"uchi}}}
\newcommand{\setf}{\eventually}
\newcommand{\setfb}[2]{\setf^{{#1}}(#2)}
\newcommand{\even}{{\mathit even}}
\newcommand{\cost}{{\mathtt{cost}}}
\newcommand{\costRV}{{\mathtt{Cost}}}
\newcommand{\cyl}{\mathfrak C}
\newcommand{\classcyl}{\mathcal{C}}
\newcommand{\classmon}{\mathcal{Q}}
\newcommand{\pmdp}{\mdp_{*}}
\newcommand{\pstates}{\states_{*}}
\newcommand{\pzstates}{\states_{*\zsymbol}}
\newcommand{\prstates}{\states_{*\rsymbol}}
\newcommand{\ptransition}{\transition_{*}}
\newcommand{\pprobp}{\probp_{*}}
\title{Transience in Countable MDPs}
\author{Stefan Kiefer}{Department of Computer Science, University of Oxford, UK}{}{}{}
\author{Richard Mayr}{School of Informatics, University of Edinburgh, UK}{}{}{}
\author{Mahsa Shirmohammadi}{Universit\'e de Paris, CNRS, IRIF, F-75013 Paris, France}{}{}{}
\author{Patrick Totzke}{Department of Computer Science, University of Liverpool, UK}{}{}{}
\authorrunning{S.~Kiefer, R.~Mayr, M.~Shirmohammadi, P.~Totzke}
\keywords{Markov decision processes, Parity, Transience}
\begin{document}

\maketitle

\begin{abstract}
The $\transient$ objective is not to visit any state infinitely often.
While this is not possible in any finite Markov Decision Process (MDP), it can be satisfied in
countably infinite ones, e.g., if the transition graph is acyclic.

We prove the following fundamental properties of $\transient$
in countably infinite MDPs.
\begin{enumerate}
\item
  There exist uniformly $\eps$-optimal MD strategies  (memoryless
  deterministic) for $\transient$, even in infinitely branching MDPs.
\item
  Optimal strategies for $\transient$ need not exist,
  even if the MDP is finitely branching.
  However, if an optimal strategy exists then there is also an optimal MD strategy.
\item
If an MDP is universally transient (i.e., almost surely transient under
all strategies) then many other objectives have a lower strategy complexity
than in general MDPs. E.g., $\eps$-optimal strategies for Safety and co-B\"uchi and
optimal strategies for $\cParity{\{0,1,2\}}$ (where they exist) can be chosen
MD, even if the MDP is infinitely branching.
\end{enumerate}
\end{abstract}

\section{Introduction}\label{sec:intro}
\epigraph{
Those who cannot remember the past are condemned to repeat it.
}{\textit{George Santayana (1905) \cite{Satayana:wiki}}}
The famous aphorism above has often been cited (with small variations), e.g.,
by Winston Churchill in a 1948 speech to the House of Commons, and carved into
several monuments all over the world \cite{Satayana:wiki}.

We prove that the aphorism is false.
In fact, even those who cannot remember anything at all are \emph{not}
condemned to repeat the past.
With the right strategy they can avoid repeating the past equally well
as everyone else.
More formally, playing for $\transient$ does not require any
memory.
We show that there always exist $\eps$-optimal
memoryless deterministic strategies for $\transient$,
and if optimal strategies exist then there also exist
optimal memoryless deterministic strategies.\footnote{Our result applies to MDPs (also called games against nature).
It is an open question whether it generalizes to countable stochastic 2-player games.
(However, it is easy to see that the adversary needs infinite memory in general, even if
the player is passive \cite{KMST:ICALP2019,KMSTW2020}.)
}

{\bf\noindent Background.}
We study Markov decision processes (MDPs), a standard model for dynamic systems that
exhibit both stochastic and controlled behavior \cite{Puterman:book}.
MDPs play a prominent role in many domains, e.g., artificial intelligence and machine learning~\cite{sutton2018reinforcement,sigaud2013markov}, control theory~\cite{blondel2000survey,NIPS2004_2569}, operations research and finance~\cite{Sudderth:2020,Hill-Pestien:1987,bauerle2011finance,schal2002markov}, and formal verification~\cite{Flesch:JOTA2020,Sudderth:2020,FPS:2018,ModCheckHB18,ModCheckPrinciples08,chatterjee2012survey}.

An MDP is a directed graph where states are either random or controlled.
Its observed behavior is described by runs, which are infinite paths that are, in part, determined by the choices of a controller.
If the current state is random then the next state is chosen according to a fixed probability distribution.
Otherwise, if the current state is controlled, the controller can choose a distribution over all possible successor states.
By fixing a strategy for the controller (and initial state), one obtains a probability space
of runs of the MDP. The goal of the controller is to optimize the expected value of
some objective function on the runs.

The \emph{strategy complexity} of a given objective characterizes
the type of strategy necessary to achieve an optimal (resp.\ $\eps$-optimal)
value for the objective.
General strategies can take
the whole history of the run into account (history-dependent; (H)),
while others use
only bounded information about it
(finite memory; (F))
or base decisions only on the current state (memoryless; (M)).
Moreover, the strategy type depends on whether the controller can
randomize (R) or is limited to deterministic choices~(D).
The simplest type, MD, refers to memoryless deterministic strategies.

\smallskip
{\bf\noindent Acyclicity and Transience.}
An MDP is called acyclic iff its transition graph is acyclic.
While finite MDPs cannot be acyclic (unless they have deadlocks),
countable MDPs can.
In acyclic countable MDPs, the
strategy complexity of B\"uchi/Parity objectives is lower 
than in the general case:
$\eps$-optimal strategies for B\"uchi/Parity objectives require only one bit
of memory in acyclic MDPs, while they require infinite memory
(an unbounded step-counter, plus one bit) in general countable
MDPs \cite{KMST:ICALP2019,KMST2020c}.

The concept of \emph{transience} can be seen as a generalization of
acyclicity. 
In a Markov chain, a state $\state$ is called \emph{transient} iff the probability of
returning from $\state$ to $\state$ is $<1$ (otherwise the state is called
recurrent).
This means that a transient state is almost surely visited only finitely
often.
The concept of transient/recurrent is naturally lifted from Markov chains to
MDPs, where they depend on the chosen strategy.

We define the $\transient$ objective as the set of runs that do not visit
any state infinitely often. We call an MDP \emph{universally transient} iff
it almost-surely satisfies $\transient$ under every strategy.
Thus every acyclic MDP is universally transient, but not vice-versa;
cf.~\cref{fig:gambler-ruin}. In particular, universal transience does not
just depend on the structure of the transition graph, but also on the
transition probabilities.
Universally transient MDPs have interesting properties. Many objectives
(e.g., Safety, B\"uchi, co-B\"uchi) have a lower strategy complexity than in
general MDPs; see below.

We also study the strategy complexity of the $\transient$ objective itself,
and how it interacts with other objectives, e.g., how to attain a B\"uchi objective
in a transient way.

\smallskip
{\bf\noindent Our contributions.}
\begin{enumerate}
\item
  We show that there exist uniformly $\eps$-optimal MD strategies (memoryless
  deterministic) for $\transient$, even in infinitely branching MDPs.
  This is unusual, since (apart from reachability objectives) most other
  objectives require infinite memory if the MDP is infinitely branching,
  e.g., all objectives generalizing Safety \cite{KMSW2017}.
 
  Our result is shown in several steps.
  First we show that there exist $\eps$-optimal deterministic 1-bit strategies
  for $\transient$.
  Then we show how to dispense with the 1-bit memory and obtain
  $\eps$-optimal MD strategies for $\transient$.
  Finally, we make these MD strategies uniform, i.e., independent of the
  start state.
\item
  We show that optimal strategies for $\transient$ need not exist,
  even if the MDP is finitely branching.
  If they do exist then there are also MD optimal strategies.
  More generally, there exists a single MD strategy that is optimal from
  every state that allows optimal strategies for $\transient$.
\item
If an MDP is universally transient (i.e., almost surely transient under
all strategies) then many other objectives have a lower strategy complexity
than in general MDPs, e.g., $\eps$-optimal strategies for Safety and co-B\"uchi and
optimal strategies for $\cParity{\{0,1,2\}}$ (where they exist) can be chosen
MD, even if the MDP is infinitely branching.
\end{enumerate}
For our proofs we develop some technical results that are of
independent interest.
We generalize Ornstein's plastering construction~\cite{Ornstein:AMS1969} from
reachability to tail objectives and thus obtain a general tool to
infer uniformly $\eps$-optimal MD strategies from non-uniform ones
(cf.~\cref{thm:Ornstein-plastering}).
Secondly, in \cref{sec:conditioned} we develop the notion of the
\emph{conditioned MDP} (cf.~\cite{KMSW2017}). For tail objectives, this allows to
obtain uniformly $\eps$-optimal MD strategies wrt.\ \emph{multiplicative errors}
from those with merely additive errors.

\section{Preliminaries}\label{sec:prelim}

A \emph{probability distribution} over a countable set $S$ is a function
$f:\states\to[0,1]$ with $\sum_{\state\in \states}f(\state)=1$.
We write 
$\dist(\states)$ for the set of all probability distributions over $\states$.

\smallskip
\noindent{\bf Markov Decision Processes.} We define Markov decision processes (MDPs for short) over  
countably infinite
state spaces as tuples $\mdp=\mdptuple$ where 
$S$ is the countable set of states  
 partitioned into a set~$\zstates$ of \emph{controlled states} 
and  a set~$\rstates$ of \emph{random states}.
The \emph{transition relation} is~$\transition\subseteq\states\x\states$,
and $\probp:\rstates \to \dist(\states)$ is  a  \emph{probability function}. 
We  write $\state\transition{}\state'$ if $\tuple{\state,\state'}\in \transition$,
and  refer to~$s'$ as a \emph{successor} of~$s$. 
We assume that every state has at least one successor.  
The probability function~$P$  assigns to each random state~$\state\in \rstates$
a probability distribution~$P(\state)$ over its set of successors.
A \emph{sink} is a subset $T \subseteq \states$ closed under the $\transition$ relation.

An MDP is \emph{acyclic} if the underlying  graph~$(S,\transition)$ is acyclic.
It is  \emph{finitely branching} 
if every state has finitely many successors
and \emph{infinitely branching} otherwise.
An MDP without controlled states
($\zstates=\emptyset$) is
a \emph{Markov chain}.

\smallskip
\noindent{\bf Strategies and Probability Measures.}
A \emph{run}~$\play$ is an  infinite sequence $\state_0\state_1\cdots$ of states
such that $\state_i\transition{}\state_{i+1}$ for all~$i\in \mathbb{N}$;
%
%
a \emph{partial run} is a finite prefix of a run.
We write $\play(i)=s_i$ and say that (partial) run $\state_0\state_1\cdots$ \emph{visits} $\state$ if
$\state=s_i$ for some $i$. It \emph{starts in} $s$ if $\state=s_0$. 

A \emph{strategy} 
is a function $\zstrat:\states^*\zstates \to \dist(S)$ that 
assigns to partial runs $\partialplay\state \in \states^*\zstates$ 
a distribution over the successors of $\state$. 
We write $\zallstrats{\mdp}$ for the set of all strategies in $\mdp$.
 A strategy~$\zstrat$ and an initial state $\state_0\in \states$
induce a standard probability measure on sets of infinite runs. We write $\probm_{\mdp,\state_0,\zstrat}({\playset})$ for the probability of a 
measurable set $\playset \subseteq \state_0 \states^\omega$ of runs starting from~$\state_0$.
It is   defined for the cylinders~$s_0 s_1 \ldots s_n \states^\omega\in \states^{\omega}$ as
 $\probm_{\mdp,\state_0,\zstrat}(s_0 s_1 \ldots s_n \states^\omega) \eqdef \prod_{i=0}^{n-1} \bar{\zstrat}(s_0 s_1 \ldots s_i)(s_{i+1})$, where $\bar{\zstrat}$ is the map that extends~$\zstrat$ by $\bar{\zstrat}(w s) = \probp(s)$ for all $w s \in \states^* \rstates$.
By Carath\'eodory's theorem~\cite{billingsley-1995-probability}, 
the measure for cylinders extends uniquely to a probability measure~$\probm_{\mdp,\state_0,\zstrat}$ on 
 all measurable subsets of $s_0S^{\omega}$.
We will write $\expectval_{\mdp,\state_0,\zstrat}$
for the expectation w.r.t.~$\probm_{\mdp,\state_0,\zstrat}$.

\smallskip
\noindent{\bf Strategy Classes.}
Strategies  $\zstrat:\states^*\zstates \to \dist(S)$ are in general  \emph{randomized} (R) in the sense that they take values in $\dist(\states)$. 
A strategy~$\zstrat$ is \emph{deterministic} (D) if $\zstrat(\rho)$ is a Dirac distribution 
for all partial runs~$\rho\in \states^{*} \zstates$.

We formalize the amount of \emph{memory} needed to implement strategies in Appendix~\ref{app-st-classes}.
The two classes of \emph{memoryless} and \emph{1-bit}  strategies are central to this paper.
A strategy $\zstrat$ is \emph{memoryless}~(M) 
if $\zstrat$ bases  its decision only on the last state of the run: $\zstrat(\rho s)=\zstrat(\rho' s)$
for all $\rho,\rho'\in \states^*$. 
We may view
M-strategies as functions $\zstrat: \zstates \to \dist(\states)$.
A {1-bit} strategy  $\zstrat$ may base  its decision
 also on a memory mode~$\memconf \in\{0,1\}$. Formally, a {1-bit} strategy $\zstrat$ is given as a tuple $(u,\memconf_0)$
where $\memconf_0\in \{0,1\}$ is the initial memory mode and 
$u:\{0,1\}\x \states \to \dist(\{0,1\} \x \states)$ is an update function such that
\begin{itemize}
	\item for all controlled states~$\state\in \zstates$, 
            the distribution
            $\updatefun((\memconf,\state))$ is over 
	$\{0,1\} \times \{\state'\mid \state \transition{} \state'\}$.
	\item for all random states $\state \in \rstates$, we have that
            $\sum_{\memconf'\in \{0,1\}} \updatefun((\memconf,\state))(\memconf',\state')=P(\state)(\state')$.
\end{itemize}
Note that this definition allows for updating the memory mode upon visiting
random states. 
We write $\zstrat[\memconf_0]$ for the strategy obtained from~$\zstrat$
by setting the initial memory mode to~$\memconf_0$.

\emph{MD strategies} are both memoryless and deterministic; and \emph{deterministic 1-bit} strategies   are both deterministic and 1-bit.  

\smallskip
\noindent {\bf Objectives.} 
The objective of the controller is determined by a predicate on infinite runs.
We assume familiarity with the syntax and semantics of the temporal
logic LTL \cite{CGP:book}.
Formulas are interpreted on the underlying structure $(\states,\transition)$ of the MDP~$\mdp$.
We use 
$\denotationof{\formula}{\mdp,\state} \subseteq \state \states^\omega$ to denote the set of runs starting from
$\state$ that satisfy the LTL formula $\formula$,
which is a measurable set \cite{Vardi:probabilistic}.
We also write $\denotationof{\formula}{\mdp}$ for $\bigcup_{s \in S} \denotationof{\formula}{\mdp,\state}$.
Where it does not cause confusion we will
identify $\varphi$ and $\denotationof{\formula}{}$
and just write
$\probm_{\mdp,\state,\zstrat}(\formula)$
instead of 
$\probm_{\mdp,\state,\zstrat}(\denotationof{\formula}{\mdp,\state})$.

Given a  set $\reachset \subseteq \states$   of  states, 
  the \emph{reachability} objective $\reach{\reachset}\eqdef\eventually \reachset$ is the set of  runs that visit  $\reachset$ at least once.
  The \emph{safety} objective~$\safety{\reachset} \eqdef \always \neg T$ is the set of  runs that never visit~$T$.

Let $\cset \subseteq \nat$ be a finite set of colors.
A \emph{color function} $\coloring:\states\to \cset$ assigns to each state~$\state$ its color~$\colorof\state$. The parity objective, written as
$\Parity{\coloring}$, is the set of infinite runs such that the largest
color that occurs infinitely often along the run is even.
To define this formally, let
$\even(\cset)=\{i\in \cset\mid i\equiv 0\mod{2}\}$.
For 
$\mathord{\constraint}\in\{\mathord{<},\mathord{\le},\mathord{=},\mathord{\geq},\mathord{>}\}$,
$n\in\nat$, 
and $\stateset\subseteq\states$, let
$\colorset \stateset\constraint n \eqdef \setcomp{\state\in \stateset}{\colorof\state\constraint n}$
be the set of states in $\stateset$ with color $\constraint n$.
Then 
\[
 \Parity{\coloring} \eqdef
     \bigvee_{i\in \even(\cset)}\left(\always\eventually \colorset{\states}{= i}{} \wedge
 \eventually\always \colorset{\states}{\leq  i}{}\right).
\]

We write $\cParity{\cset}$ for the parity objectives with the set of colors~$\cset \subseteq \nat$.
The classical B\"uchi and co\nobreakdash-B\"uchi  objectives correspond to
$\cParity{\{1,2\}}$ and $\cParity{\{0,1\}}$, respectively.

{\renewcommand{\denotationof}[1]{#1}
    An objective $\formula$ is called a \emph{tail objective} (in $\mdp$)
 iff for every run
$\rho'\rho$ with some finite prefix $\rho'$ we have
$\rho'\rho \in \denotationof{\formula}{} \Leftrightarrow \rho \in \denotationof{\formula}{}$.   
}
For every coloring $\coloring$, $\Parity{\coloring}$ is tail.
Reachability objectives 
are not always tail but in MDPs where the target set $\reachset$ is a sink
$\reach{\reachset}$  is tail.

\smallskip
\noindent{\bf Optimal and $\eps$-optimal Strategies.}
Given an objective~$\formula$, the \emph{value} of state~$s$ in an MDP~$\mdp$, denoted by 
$\valueof{\mdp,\formula}{s}$, is the supremum probability of achieving~$\formula$.
 Formally, we have  $\valueof{\mdp,\formula}{s} \eqdef\sup_{\sigma \in \Sigma} \probm_{\mdp,\state,\zstrat}(\formula)$ where $\Sigma$ is the set of all strategies.
For $\eps\ge 0$ and state~$s\in\states$, we say that a strategy is \emph{$\eps$-optimal} from $s$
iff $\probm_{\mdp,\state,\zstrat}(\formula) \geq \valueof{\mdp,\formula}{s} -\eps$.
A $0$-optimal strategy is called \emph{optimal}. 
An optimal strategy is \emph{almost-surely winning} iff $\valueof{\mdp,\formula}{s} = 1$. 
 
Considering an MD strategy as a function $\zstrat: \zstates \to \states$ and $\eps\ge 0$, $\zstrat$ is \emph{uniformly} $\eps$-optimal  (resp.~uniformly optimal) if it is $\eps$-optimal (resp.~optimal) from every $s\in S$.

\smallskip
Throughout the paper, we may drop the subscripts and superscripts from  notations, if it is understood from the context. The missing proofs can be found in the appendix.

\section{Transience and Universally Transient MDPs}\label{sec:transientPre}

In this section we define the transience property for MDPs, a natural generalization  of the well-understood concept of transient Markov chains. We enumerate crucial characteristics of this objective and define the notion of universally transient MDPs.

Fix a countable MDP $\mdp=\mdptuple$. Define the transience objective, denoted by $\transient$,
to be the set of runs that do not visit any state of~$\mdp$ infinitely
  often, i.e., 
  \[\transient \eqdef \bigwedge_{\state \in \states} \eventually\always \; \neg\state.\]
The~$\transient$ objective is tail, as it is closed under removing  finite prefixes of runs. Also  note that $\transient$ cannot be encoded in a parity objective.

We call $\mdp$ \emph{universally transient} iff for all states~$s_0$, for all strategies $\zstrat$, the   
  $\transient$ property holds almost-surely from~$s_0$, i.e.,
 \[
 \forall \state_0\in\states~~\forall \zstrat\in\zstratset~~\probm_{\mdp,\state_0,\zstrat}(\transient)=1.
 \]

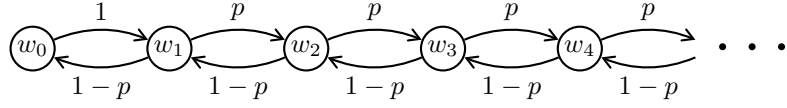
\begin{figure}[t]
\begin{center}
\begin{tikzpicture}[xscale=1.8,yscale=1.3]
\node[MDPrand] (0) at (0,0) {$w_0$};
\node[MDPrand] (1) at (1,0) {$w_1$};
\node[MDPrand] (2) at (2,0) {$w_2$};
\node[MDPrand] (3) at (3,0) {$w_3$};
\node[MDPrand] (4) at (4,0) {$w_4$};
\node[MDPrand,draw=none] (5) at (5,0) {};
\node (6) at (5.3,0) {\Huge $\cdots$};
\draw[->] (0) edge[bend left] node[above] {$1$} (1);
\draw[->] (1) edge[bend left] node[above] {$p$} (2);
\draw[->] (2) edge[bend left] node[above] {$p$} (3);
\draw[->] (3) edge[bend left] node[above] {$p$} (4);
\draw[->] (4) edge[bend left] node[above] {$p$} (5);
\draw[->] (5) edge[bend left] node[below] {$1-p$} (4);
\draw[->] (4) edge[bend left] node[below] {$1-p$} (3);
\draw[->] (3) edge[bend left] node[below] {$1-p$} (2);
\draw[->] (2) edge[bend left] node[below] {$1-p$} (1);
\draw[->] (1) edge[bend left] node[below] {$1-p$} (0);
\end{tikzpicture}
\end{center}
\vspace{-.4cm}
\caption{Gambler's Ruin with restart: The state~$w_i$ illustrates that the controller's wealth is~$i$, and 
the coin tosses are in  the controller's favor with probability~$p$. For all $i$,  $\probm_{w_i}(\transient)=0$ if $p\leq \frac{1}{2}$; and $\probm_{w_i}(\transient)=1$ otherwise.}
\label{fig:gambler-ruin}
\vspace{-.2cm}
\end{figure}

The MDP in Figure~\ref{fig:gambler-ruin} models the classical Gambler's Ruin
Problem with restart; see~\cite[Chapter~14]{Feller:book}.
It is well-known that 
if the controller starts with wealth~$i$
 and if $p\leq \frac{1}{2}$, the probability  of ruin (visiting the state~$w_0$)  
is $\probm_{w_i}(\eventually \, w_0)=1$.
Consequently, the probability of re-visiting~$w_0$ infinitely often is~$1$,  
implying that  $\probm_{w_i}(\transient)=0$.
In contrast, for the case with $p> \frac{1}{2}$, for all states~$w_i$,
the probability of re-visiting~$w_i$ is strictly below~$1$. 
Hence, the $\transient$ property holds almost-surely. 
This example indicates that the transience property depends on the probability
values of the transitions and not just on the underlying transition graph,
and thus may require arithmetic reasoning. 
In particular, the MDP in Figure~\ref{fig:gambler-ruin} 
is universally transient iff $p>\frac{1}{2}$.

In general, optimal strategies for $\transient$ need not exist:
\begin{lemma}\label{lem:no-optimal}
There exists a finitely branching countable MDP with  initial state~$\state_0$ such that
 \begin{itemize}
 \item $\valueof{\transient}{\state}=1$ for all controlled states~$\state$, 
 \item there does not exist any optimal strategy~$\sigma$ such that $\probm_{\state_0,\sigma}(\transient)=1$.
 \end{itemize}
\end{lemma}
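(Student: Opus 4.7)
The plan is to construct a finitely branching countable MDP whose structure is a commit-or-continue ladder with increasingly reliable gadgets. Let $\zstates\eqdef\{s_n:n\ge 0\}$ and $\rstates\eqdef\{q_n:n\ge 0\}\cup\{a_n:n\ge 1\}\cup\{t_k:k\ge 0\}\cup\{r\}$. Each controlled $s_n$ has two successors: \emph{commit} sends it to $a_{n+1}$ and \emph{continue} sends it to $q_n$. The random state $q_n$ transitions to $s_{n+1}$ or $s_0$ each with probability $1/2$; the random state $a_n$ transitions to $t_0$ with probability $1-2^{-n}$ and to $r$ with probability $2^{-n}$; each $t_k$ deterministically advances to $t_{k+1}$; and $r$ has a self-loop. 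Every state has at most two successors, so the MDP is finitely branching.

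For the first bullet I would show $\valueof{\transient}{s_m}=1$ for every $m$ by exhibiting, for each $k$, the MD strategy $\sigma_k$ that commits at $s_k$ and continues at every other controlled state. Starting from $s_0$, under $\sigma_k$ the process performs a random walk on $\{s_j,q_j\}_{j\le k}$: a single climb from $s_0$ to $s_k$ without bouncing back succeeds with probability $2^{-k}$, so $s_k$ is reached after a geometric number of attempts and every $s_j,q_j$ with $j<k$ is visited only finitely often almost surely. The commitment at $s_k$ then enters the transient chain $t_0t_1\cdots$ with probability $1-2^{-(k+1)}$. Letting $k\to\infty$ gives $\valueof{\transient}{s_0}=1$, and the same argument applied from any $s_m$ gives $\valueof{\transient}{s_m}=1$.

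For the second bullet, let $\sigma$ be any strategy from $s_0$. Let $C$ be the event of ever entering some $a_n$ and, on $C$, let $N$ denote the unique level entered; set $\pi_n\eqdef\probm_{s_0,\sigma}(N=n)$. Since $a_n$ transitions to $r$ with probability $2^{-n}$, we have $\probm_{s_0,\sigma}(\transient\cap C)\le\sum_{n\ge 1}\pi_n(1-2^{-n})$. To handle the complement $\neg C$, I would couple $\sigma$ with the memoryless deterministic strategy $\tilde\sigma$ that always plays continue. Realising $\sigma$'s randomisation through fresh uniform variables at each visit to a controlled state, and sharing the $q_n$-coins between the two strategies, one sees that on $\neg C$ the $\sigma$-run never commits and so coincides with the $\tilde\sigma$-run. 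The Markov chain induced by $\tilde\sigma$ on $\{s_j,q_j\}$ is recurrent (a direct computation shows the mean return time to $s_0$ equals $4$), so $s_0$ is $\tilde\sigma$-a.s.\ visited infinitely often; hence $\probm_{s_0,\sigma}(\transient\cap\neg C)=0$. Combining,
\[
  \probm_{s_0,\sigma}(\transient)\;\le\;\sum_{n\ge 1}\pi_n(1-2^{-n})\;<\;1,
\]
where the strict inequality follows because $2^{-n}>0$ for every $n\ge 1$, regardless of whether $\sum_n\pi_n<1$ or $\sum_n\pi_n=1$. Thus no strategy achieves value $1$ from $s_0$.

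The only delicate point is the coupling step: for an arbitrary history-dependent and randomised $\sigma$, one needs to work on a common probability space in which the controller's action choices and the MDP's transition coins are drawn from shared sources, so that the event ``$\sigma$ always happens to choose continue'' literally forces the two runs to be identical. Everything else reduces to routine random-walk computations on the always-continue chain.
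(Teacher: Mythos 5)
Your proposal is correct and follows essentially the same construction and argument as the paper: a recurrent ladder of controlled states from which the controller must eventually commit to an exit gadget that reaches an acyclic (hence transient) tail chain with probability $1-2^{-n}$ and an absorbing sink otherwise, so that staying forever yields probability $0$ while every commitment incurs a fixed positive loss. The only differences are cosmetic: your ladder uses a reset-to-$s_0$ chain (positive recurrent, with an elementary geometric return-time computation) in place of the paper's fair Gambler's Ruin walk, and your case split over the commitment level $N$ together with the coupling/domination argument on $\neg C$ makes explicit what the paper states more informally.
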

\begin{proof}
	Consider a countable MDP~$\mdp$ with set~$S=\{\ell_i,\ell'_i,r_i,x_i \mid i\geq 1\} \cup\{\ell_0,\bot\}$ of states; see Figure~\ref{fig:no-optimal}.
For all $i\geq 1$ the state $x_{i+1}$ is the unique successor of~$x_i$ so that $(x_i)_{i\geq 1}$ form an acyclic ladder;  the value of $\transient$ is $1$ for all $x_i$. 
The state $\bot$ is  sink, and  its value is~$0$. 
	The states~$(r_i)_{i\geq 1}$ are all random, and  $r_i \step{1-2^{-i}} x_i$ and $r_i \step{2^{-i}} \bot$. Observe that  
	 the value of $\transient$ is~$1-2^{-i}$ for the $r_i$.	
	
	The states $(\ell_i)_{i\in \mathbb{N}}$ are controlled whereas the states $(\ell'_i)_{i\geq 1}$ are random. 
	By interleaving of these states, we construct a ``recurrent ladder'' of decisions: $\ell_0 \to \ell_1$ and for all $i\geq 1$,  state~$\ell_i$ has two successors~$\ell'_{i}$ and $r_i$. 
	 In random states~$\ell'_i$, as in Gambler's Ruin with a fair coin, the successors are $\ell_{i-1}$ or $\ell_{i+1}$, each with equal probability. 
	In each state~$(\ell_i)_{i\geq 1}$, the controller decides to either stay on the ladder   by going to~$\ell'_{i}$ or leaves the ladder to~$r_i$. As in Figure~\ref{fig:gambler-ruin}, if the controller stays on the ladder forever, the probability of $\transient$ is~$0$.

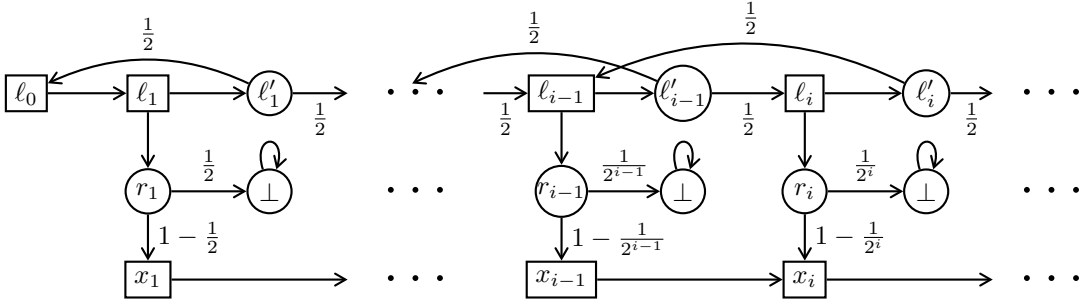
\begin{figure}
 \centering
 \begin{tikzpicture}[xscale=1.6,yscale=1.3]
 \node[MDPcont] (s0) at (0,2.6) {$~\ell_0~$};
\node[MDPcont] (s1) at (1,2.6) {$~\ell_{1}~$};
\node[MDPrand] (s2) at (2,2.6) {$\ell'_{1}$};
\node[MDPrand, draw=none] (s3) at (3.2,2.6) {\huge $~~\cdots~~$};
\node[MDPrand, draw=none] (ss3) at (3,2.6) {};
\node[MDPcont] (s4) at (4.4,2.6) {$~\ell_{i-1}~$};
\node[MDPrand] (s5) at (5.4,2.6) {$\ell'_{i-1}$};
\node[MDPcont] (s6) at (6.4,2.6) {$~\ell_i~$};
\node[MDPrand] (s7) at (7.4,2.6) {$~\ell'_{i}~$};
\node (s8) at (8.4,2.6) {\huge $~\cdots$};

\node[MDPrand] (r1) at (1,1.6) {$~r_{1}~$};
\node[MDPrand] (bot1) at (2,1.6) {$\bot$};
\node[MDPrand, draw=none] (r0) at (3.2,1.6) {\huge $~~\cdots~~$};
\node[MDPrand] (r3) at (4.4,1.6) {$r_{i-1}$};
\node[MDPrand] (bot3) at (5.4,1.6) {$\bot$};
\node[MDPrand] (r5) at (6.4,1.6) {$~r_i~$};
\node[MDPrand] (bot5) at (7.4,1.6) {$\bot$};
\node (r6) at (8.4,1.6) {\huge $~\cdots$};

\node[MDPcont] (x1) at (1,0.7) {$~x_{1}~$};
\node[MDPrand, draw=none] (x0) at (3.2,0.7) {\huge $~~\cdots~~$};
\node[MDPcont] (x3) at (4.4,0.7) {$~x_{i-1}~$};
\node[MDPcont] (x5) at (6.4,0.7) {$~x_i~$};
\node (x6) at (8.4,0.7) {\huge $~\cdots$};

\draw[->] (s0) edge  (s1);
\draw[->] (s1) edge  (s2);
\draw[->] (s2) edge node[below] {$\frac{1}{2}$}(s3);
\draw[->] (s3) edge node[below] {$\frac{1}{2}$} (s4);
\draw[->] (s4) edge  (s5);
\draw[->] (s5) edge node[below] {$\frac{1}{2}$}(s6);
\draw[->] (s6) edge  (s7);
\draw[->] (s7) edge node[below] {$\frac{1}{2}$}(s8);

\draw[->] (s2) edge[bend right] node[above] {$\frac{1}{2}$} (s0);
\draw[->] (s5) edge[bend right] node[above] {$\frac{1}{2}$} (ss3);
\draw[->] (s7) edge[bend right] node[above] {$\frac{1}{2}$} (s4);

\draw[->] (s1) edge  (r1);
\draw[->] (s4) edge  (r3);
\draw[->] (s6) edge  (r5);

\draw[->] (x1) edge  (x0);
\draw[->] (x3) edge  (x5);
\draw[->] (x5) edge  (x6);
\draw[->] (r1) edge node[right] {{$1-\frac{1}{2}$}} (x1);
\draw[->] (r1) edge node[above] {$\frac{1}{2}$} (bot1);
\draw[->] (r3) edge node[right] {$1-\frac{1}{2^{i-1}}$} (x3);
\draw[->] (r3) edge node[above] {$\frac{1}{2^{i-1}}$} (bot3);
\draw[->] (r5) edge node[right] {$1-\frac{1}{2^{i}}$} (x5);
\draw[->] (r5) edge node[above] {$\frac{1}{2^{i}}$} (bot5);

\draw[->] (bot1) edge[loop above, looseness=10]  (bot1);
\draw[->] (bot3) edge[loop above, looseness=10]  (bot3);
\draw[->] (bot5) edge[loop above, looseness=10]  (bot5);

\end{tikzpicture}
\vspace{-1cm}
\caption{A partial illustration of the MDP in Lemma~\ref{lem:no-optimal}, in which there is no optimal strategy for $\transient$, starting from states~$\ell_i$. 
For readability, we have three copies of the state~$\bot$. We call the ladder consisting of the interleaved controlled states~$\ell_i$ and random states~$\ell'_i$ a ``recurrent ladder'': if the controller stays on this ladder forever, it faithfully simulates a Gambler's Ruin with a fair coin, and the probability of $\transient$ will be~$0$. }
\label{fig:no-optimal}
\vspace{-.2cm}
 \end{figure}
	
Starting in~$\ell_0$, for all $i>0$, strategy~$\sigma_i$ that stays on the
ladder until visiting $\ell_i$ (which happens eventually almost surely)
and then leaves the ladder to~$r_i$ achieves $\transient$  with probability~$1-2^{i}$. Hence,  $\valueof{\transient}{\ell_0}=1$.

Recall that transience cannot be achieved with a positive probability by staying on the acyclic ladder forever.
But any strategy that leaves the ladder with a positive probability comes with
a positive probability of falling into~$\bot$, thus is not optimal either.
Thus there is no optimal strategy for $\transient$.
 \end{proof}

\noindent{\bf Reduction to Finitely Branching MDPs.}
In our main results, we will prove that for the $\transient$ property 
 there always exist $\varepsilon$-optimal MD strategies in finitely branching countable MDPs;
and if an optimal strategy exists, there will exist an optimal MD strategy. 
We generalize these results to infinitely branching countable MDPs
by the following reduction: 


\begin{restatable}{lemma}{lemreductionfinitebranch}\label{lem:reduction-finite-branch}
Given an infinitely branching countable MDP~$\mdp$ with an initial state~$s_0$, 
there exists a finitely branching countable~$\mdp'$ with a set~$S'$ of states such that $s_0\in S'$ 
and   
\begin{enumerate}
	\item each strategy~$\alpha_1$ in~$\mdp$ is mapped to a unique strategy~$\beta_1$ in~$\mdp'$ where
	\[\probm_{s_0,\alpha_1}(\transient)=\probm_{s_0,\beta_1}(\transient),\]
	 
	\item  and conversely, every MD strategy~$\beta_2$ in $\mdp'$  is mapped to an MD strategy~$\alpha_2$ in~$\mdp$ where
	\[\probm_{s_0,\alpha_2}(\transient)\geq \probm_{s_0,\beta_2}(\transient).\]
\end{enumerate}
\end{restatable}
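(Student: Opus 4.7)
The plan is to construct $\mdp'$ from $\mdp$ by replacing each infinitely-branching state with a finitely-branching ``ladder gadget'', and then to exhibit the two strategy mappings explicitly. For an infinitely-branching random state $s$ with $P(s)(s_i) = q_i$, I add a random ladder $t_0^s := s, t_1^s, t_2^s, \dots$ where $t_i^s$ goes to $s_i$ with probability $q_i / (1 - \sum_{j<i} q_j)$ and to $t_{i+1}^s$ otherwise, so that the eventual exit from $t_0^s$ matches $(q_i)_i$. For an infinitely-branching controlled state $s$ with successors $s_0, s_1, \dots$, I add controlled states $t_0^s := s, t_1^s, \dots$ together with random states $r_0^s, r_1^s, \dots$, where each $t_i^s$ has the two successors $s_i$ (``exit'') and $r_i^s$ (``continue''), and each $r_i^s$ goes to $t_{i+1}^s$ and back to $t_0^s$ each with probability $1/2$. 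The crucial feature is the backtrack to $t_0^s$: under the MD strategy that picks ``continue'' at every $t_i^s$, the induced walk on the gadget is positive recurrent at $t_0^s$ (its expected return time is finite), so $t_0^s$ is almost-surely revisited infinitely often once reached. Both gadget types are finitely branching, hence $\mdp'$ is too, with $s_0\in S'$ (identifying $s_0$ with $t_0^{s_0}$ if needed).

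For Part~1, given any strategy $\alpha_1$ in $\mdp$ I would build a history-dependent $\beta_1$ in $\mdp'$ that coincides with $\alpha_1$ on unreplaced states and, on each controlled $s$-gadget, uses exit probabilities $(a_i)$ chosen so that the eventual exit distribution from $t_0^s$ matches the distribution $\alpha_1$ specifies on $\{s_i\}_i$ for the corresponding $\mdp$-history (such $(a_i)$ exist by a routine recursive computation on the gadget's transition structure). Since each gadget traversal almost-surely terminates in finite expected time, the projection ``erase gadget states'' sends the $\mdp'$-run under $\beta_1$ to an $\mdp$-run distributed as the $\mdp$-run under $\alpha_1$; moreover, an original state $s$ is visited infinitely often in the projection iff any (equivalently, all) states of its gadget are visited infinitely often in the $\mdp'$-run. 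The two transience events therefore coincide and $\probm_{s_0,\alpha_1}(\transient) = \probm_{s_0,\beta_1}(\transient)$.

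For Part~2, given an MD strategy $\beta_2$ in $\mdp'$ I would let $\alpha_2 = \beta_2$ on unreplaced controlled states and, for each infinitely-branching controlled $s$, set $\alpha_2(s) := s_{k_s}$ where $k_s := \min\{i : \beta_2(t_i^s) = s_i\}$; if this set is empty (so $k_s = \infty$) pick $\alpha_2(s)$ arbitrarily. When $k_s < \infty$ the $\mdp'$-walk on the gadget exits almost surely at $s_{k_s}$ in finite expected time, so by the traversal argument from Part~1 the transience probabilities coincide. When $k_s = \infty$ the backtrack design makes $t_0^s$ recurrent under $\beta_2$, so any $\mdp'$-run that reaches the $s$-gadget is almost-surely \emph{not} transient; hence the conditional transience probability given reaching the gadget is $0$ under $\beta_2$ in $\mdp'$, while under $\alpha_2$ the corresponding conditional probability in $\mdp$ is nonnegative. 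Outside the gadget the two processes are isomorphic, so the ``gadget never reached'' contributions agree, and summing yields $\probm_{s_0,\alpha_2}(\transient) \geq \probm_{s_0,\beta_2}(\transient)$.

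The main obstacle is Part~2 for controlled gadgets: a na\"ive acyclic ladder would admit the ``continue forever'' MD strategy which traps in a transient infinite chain, achieving $\probm(\transient) = 1$ from $t_0^s$ in $\mdp'$ even when no strategy in $\mdp$ escapes $s$ transiently (e.g.~when every successor of $s$ almost surely returns to $s$), and the required inequality would fail. The backtrack-to-$t_0^s$ design is chosen precisely to neutralise this pathology: ``continue forever'' becomes recurrent and therefore non-transient in $\mdp'$, which restores the inequality. The remaining bookkeeping around finite-expected gadget traversals is then routine.
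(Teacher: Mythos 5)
Your proposal is correct and follows essentially the same route as the paper: replace infinitely branching states by finitely branching ladder gadgets whose ``continue forever'' option induces a recurrent (hence non-transient) walk, map arbitrary strategies forward by simulating the chosen successor distribution as an exit distribution of the gadget, and pull MD strategies back by reading off the exit rung (or choosing arbitrarily when the MD strategy never exits, in which case it attains transience probability $0$ anyway). The only difference is cosmetic: the paper's controlled gadget is a fair Gambler's-Ruin ladder (null recurrent), whereas yours resets to the bottom rung with probability $1/2$ (positive recurrent); both neutralise exactly the pathology you correctly identify for a na\"ive acyclic ladder.
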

\begin{proof}[Proof sketch]
See \cref{app-transientPre} for the complete construction.
In order to construct~$\mdp'$ from~$\mdp$, 
for each controlled state~$s\in S$ in~$\mdp$ that has infinitely many successors~$(s_i)_{i\geq 1}$,
 a ``recurrent ladder'' is introduced; see Figure~\ref{fig:reduction-inf}. 
Since the probability of $\transient$ is $0$ for 
all those runs that eventually stay forever on a recurrent ladder, the
controller should exit such ladders to play optimally for~$\transient$.
Infinitely branching random states can be dealt with in an easier way.
\end{proof}

\begin{figure}[t]
 \centering
 \begin{tikzpicture}[xscale=1.6,yscale=1.3]
 
 \draw [blue, thick, dashed] (-0.3,6.6) rectangle (8.8,5.4);

 \node[MDPcont] (s) at (0,6.3) {$~s~$}; 
\node[MDPcont] (s1) at (1,5.7) {$~s_{1}~$};
\node[MDPrand, draw=none] (r0) at (2.7,5.8) {\huge $~\cdots~$};
\node[MDPrand] (s3) at (4.4,5.7) {$s_{i-1}$};
\node[MDPcont] (s5) at (6.4,5.7) {$~s_i~$};
\node (r6) at (7.7,5.8) {\huge $~\cdots$};
\node[MDPrand, draw=none]  at (8,6.2) {\Large MDP $\mdp$};
 
 \draw[-](s) edge (7,6.3);
\draw[->] (1,6.3) --  (s1);
\draw[->] (4.4,6.3) --  (s3);
\draw[->] (6.4,6.3) --  (s5);

\node[MDPrand, draw=none]  at (4.5,5.1) {\textcolor{blue}{{\huge $\Downarrow$} reduction}};
 
 \draw [blue, thick, dotted] (-0.3,4.8) rectangle (8.8,2.6);
 \node[MDPrand, draw=none]  at (8,4.4) {\Large MDP $\mdp'$};
  \node[MDPcont] (s) at (0,4.5) {$~s~$}; 
  
 \node[MDPcont] (s0) at (0,3.7) {$~\ell_0~$};
\node[MDPcont] (s1) at (1,3.7) {$~\ell_{1}~$};
\node[MDPrand] (s2) at (2,3.7) {$\ell'_{1}$};
\node[MDPrand, draw=none] (s3) at (3.2,3.7) {\huge $~~\cdots~~$};
\node[MDPrand, draw=none] (ss3) at (3,3.7) {};
\node[MDPcont] (s4) at (4.4,3.7) {$~\ell_{i-1}~$};
\node[MDPrand] (s5) at (5.4,3.7) {$\ell'_{i-1}$};
\node[MDPcont] (s6) at (6.4,3.7) {$~\ell_i~$};
\node[MDPrand] (s7) at (7.4,3.7) {$~\ell'_{i}~$};
\node (s8) at (8.4,3.7) {\huge $~\cdots$};

\node[MDPcont] (r1) at (1,2.9) {$~s_{1}~$};
\node[MDPrand, draw=none] (r0) at (3.2,2.9) {\huge $~\cdots~$};
\node[MDPrand] (r3) at (4.4,2.9) {$s_{i-1}$};
\node[MDPcont] (r5) at (6.4,2.9) {$~s_i~$};
\node (r6) at (8.4,2.9) {\huge $~\cdots$};

\draw[->] (s) edge  (s0);
\draw[->] (s0) edge  (s1);
\draw[->] (s1) edge  (s2);
\draw[->] (s2) edge node[below] {$\frac{1}{2}$}(s3);
\draw[->] (s3) edge node[below] {$\frac{1}{2}$} (s4);
\draw[->] (s4) edge  (s5);
\draw[->] (s5) edge node[below] {$\frac{1}{2}$}(s6);
\draw[->] (s6) edge  (s7);
\draw[->] (s7) edge node[below] {$\frac{1}{2}$}(s8);

\draw[->] (s2) edge[bend right] node[above] {$\frac{1}{2}$} (s0);
\draw[->] (s5) edge[bend right] node[above] {$\frac{1}{2}$} (ss3);
\draw[->] (s7) edge[bend right] node[above] {$\frac{1}{2}$} (s4);

\draw[->] (s1) edge  (r1);
\draw[->] (s4) edge  (r3);
\draw[->] (s6) edge  (r5);

\end{tikzpicture}
\vspace{-.7cm}
\caption{A partial illustration of the reduction in Lemma~\ref{lem:reduction-finite-branch}. }
\label{fig:reduction-inf}
\vspace{-.2cm}
 \end{figure}
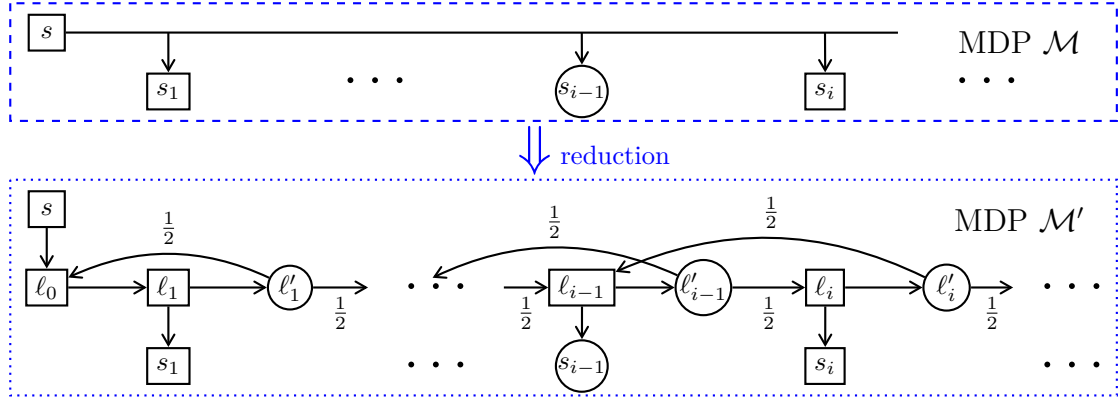

\medskip
\noindent{\bf Properties of Universally Transient MDPs.}

Notice that acyclicity implies universal transience, but not vice-versa.

\begin{restatable}{lemma}{lemstructuraltransience}\label{lem:structural-transience}
  For every countable MDP $\mdp=\mdptuple$, the following conditions are equivalent.
  \begin{enumerate}
  \item
    $\mdp$ is universally transient, i.e., $\forall \state_0, \forall\zstrat.\, \probm_{\mdp,\state_0,\zstrat}(\transient)=1$.
  \item
    For every initial state $\state_0$ and state $\state$,
    the objective of re-visiting $\state$ infinitely often has value zero, i.e.,
$\forall \state_0,\state\,\sup_{\zstrat}\probm_{\mdp,\state_0,\zstrat}(\always\eventually(\state))=0$.
  \item
    For every state $\state$ the value of the objective to re-visit $\state$
    is strictly below $1$, i.e.,\\
    ${\it Re}(s) \eqdef \sup_{\zstrat}\probm_{\mdp,\state,\zstrat}(\next\eventually(\state)) < 1$.
  \item
    For every state $\state$ there exists a finite bound $B(\state)$ such that
    for every state $\state_0$ and strategy $\zstrat$ from $\state_0$
    the expected number of visits to $\state$ is $\le B(\state)$.
  \item
    For all states $\state_0, \state$, under every strategy $\zstrat$ from $\state_0$
    the expected number of visits to $\state$ is finite.
\end{enumerate}
\end{restatable}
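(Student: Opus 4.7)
My plan is to prove the five equivalences cyclically via $1 \Rightarrow 2 \Rightarrow 3 \Rightarrow 4 \Rightarrow 5 \Rightarrow 1$. Three of these implications are immediate. For $1 \Rightarrow 2$, using that $\transient = \bigwedge_{s\in\states}\eventually\always\neg s$, if $\probm_{s_0,\zstrat}(\transient)=1$ under every $\zstrat$ then for each single $s$ we get $\probm_{s_0,\zstrat}(\always\eventually s)=0$ under every $\zstrat$, so $\sup_\zstrat \probm_{s_0,\zstrat}(\always\eventually s)=0$. The implication $4 \Rightarrow 5$ is trivial, since a uniform bound yields finiteness. For $5 \Rightarrow 1$, fix any $s_0$ and $\zstrat$: by condition~5 each random variable $V_s$ counting visits to $s$ has finite expectation, hence is almost surely finite; a countable union over $s \in \states$ yields that almost surely every state is visited only finitely often, i.e.\ $\transient$ holds with probability~$1$.

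The main work is the contrapositive of $2 \Rightarrow 3$. Suppose ${\it Re}(s) = 1$ for some state $s$; I will exhibit a strategy from $s$ under which $s$ is visited infinitely often with positive probability, contradicting condition~2 applied with $s_0 = s$. For each $k \ge 1$, pick a strategy $\zstrat_k$ from $s$ with $\probm_{s,\zstrat_k}(\next\eventually s) \ge 1 - 2^{-k}$. Define a composite history-dependent strategy $\zstrat^*$ from $s$ that, on a history whose suffix since the last visit to $s$ is $\rho$ and which has already returned to $s$ a total of $k$ times, plays $\zstrat_{k+1}(\rho)$. By the Markov property, the probability of returning to $s$ at least $n$ times under $\zstrat^*$ is at least $\prod_{k=1}^n (1-2^{-k})$. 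Since this product has a strictly positive limit as $n \to \infty$, $\probm_{s,\zstrat^*}(\always\eventually s) > 0$, giving the contradiction.

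For $3 \Rightarrow 4$, set $B(s) \eqdef 1/(1-{\it Re}(s))$, finite by hypothesis, and fix any $s_0, \zstrat$. Let $p_k$ denote the probability that $s$ is visited at least $k$ times. Applying the strong Markov property at the $k$-th hit of $s$, the continuation is governed by some history-dependent strategy whose return probability to $s$ is at most ${\it Re}(s)$; hence $p_{k+1} \le {\it Re}(s)\cdot p_k$ and inductively $p_k \le {\it Re}(s)^{k-1}$. Summing, $\expectval_{s_0,\zstrat}[V_s] = \sum_{k \ge 1} p_k \le B(s)$, as required. The main obstacle across the proof is the strategy-stitching in $2 \Rightarrow 3$: one must verify both that $\zstrat^*$ is a legitimate history-dependent strategy and that the consecutive return probabilities compose multiplicatively, which ultimately rests on the fact that each return under $\zstrat^*$ launches a fresh execution of some chosen $\zstrat_k$ from $s$.
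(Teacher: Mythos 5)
Your proof is correct and follows essentially the same route as the paper: the heart of both arguments is the strategy-stitching construction for the contrapositive of $2\Rightarrow 3$ (composing $\eps_k$-good return strategies and using $\prod_k(1-2^{-k})>0$) and the geometric-series bound for $3\Rightarrow 4$. The only cosmetic differences are that you close a single five-step cycle where the paper proves $1\Leftrightarrow 2$ separately and cycles $2\Rightarrow 3\Rightarrow 4\Rightarrow 5\Rightarrow 2$, and your bound $B(\state)=1/(1-{\it Re}(\state))$ via $\expectval[V_\state]=\sum_{k\ge 1}p_k$ is in fact slightly sharper than the paper's $\sum_n (n+1){\it Re}(\state)^n$.
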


\begin{proof}
Towards $(1)\Rightarrow (2)$, consider an arbitrary strategy $\zstrat$
from the initial state $\state_0$ and some state $\state$.
By (1) we have
$\forall \zstrat . \probm_{\mdp,\state_0,\zstrat}(\transient)=1$
and thus
$0 = \probm_{\mdp,\state_0,\zstrat}(\neg\transient) =
\probm_{\mdp,\state_0,\zstrat}(\bigcup_{\state' \in \states} \always\eventually(\state'))
\ge
\probm_{\mdp,\state_0,\zstrat}(\always\eventually(\state))$
which implies (2).

Towards $(2)\Rightarrow (1)$, consider an arbitrary strategy $\zstrat$
from the initial state $\state_0$.
By (2) we have
$0 = \sum_{\state \in \states} \probm_{\mdp,\state_0,\zstrat}(\always\eventually(\state))
\ge
\probm_{\mdp,\state_0,\zstrat}(\bigcup_{\state \in \states}\always\eventually(\state))
=
\probm_{\mdp,\state_0,\zstrat}(\neg\transient)
$ and thus
$\probm_{\mdp,\state_0,\zstrat}(\transient)=1$.

We now show the implications $(2) \Rightarrow (3) \Rightarrow (4) \Rightarrow
(5) \Rightarrow (2)$.

Towards $\neg(3)\Rightarrow\neg(2)$,
$\neg(3)$ implies $\exists s. {\it Re}(s)=1$ and thus
$\forall\eps>0.\exists \zstrat_\eps\, \probm_{\mdp,\state,\zstrat_\eps}(\next\eventually(\state)) \ge 1-\eps$.
Let $\eps_i \eqdef 2^{-(i+1)}$. We define the strategy $\zstrat$ to play like
$\zstrat_{\eps_i}$ between the $i$-th and $(i+1)$th visit to $\state$.
Since $\sum_{i=1}^\infty \eps_i < \infty$,
we have $\prod_{i=1}^\infty (1-\eps_i) >0$. 
Therefore $\probm_{\mdp,\state,\zstrat}(\always\eventually(\state))
\ge \prod_{i=1}^\infty (1-\eps_i) >0$,
which implies $\neg(2)$, where $s_0 = s$.


Towards $(3)\Rightarrow (4)$, regardless of $\state_0$ and the chosen strategy,
the expected number of visits to $\state$ is upper-bounded by
$B(\state) \eqdef \sum_{n=0}^\infty (n+1) \cdot ({\it Re}(\state))^n < \infty$.

The implication $(4)\Rightarrow (5)$ holds trivially.

Towards $\neg(2)\Rightarrow \neg(5)$,
by $\neg(2)$ there exist states $\state_0,\state$
and a strategy $\zstrat$ such that 
$\probm_{\mdp,\state_0,\zstrat}(\always\eventually(\state)) >0$.
Thus the expected number of visits to $\state$ is infinite, which implies~$\neg(5)$.
\end{proof}

We remark that
if an MDP is \emph{not} universally transient 
(unlike in \cref{lem:structural-transience}(5)), for a strategy~$\sigma$, 
the expected number of visits to some state can be infinite,
even if $\sigma$ attains $\transient$ almost surely.
  
Consider the MDP $\mdp$ with controlled states
$\{\state_0, \state_1, \dots\}$, initial state $\state_0$ 
and transitions $\state_0 \to \state_0$ and $\state_k \to \state_{k+1}$ for
every $k \ge 0$. 
We define a strategy $\zstrat$ that, while in state $\state_0$,
proceeds in rounds $i=1,2,\dots$.
In the $i$-th round it tosses a fair coin.
If Heads then it goes to $\state_1$.
If Tails then it loops around $\state_0$ exactly $2^i$ times and
then goes to round $i+1$.
In every round the probability of going to $\state_1$ is $1/2$
and therefore the probability of staying in $\state_0$ forever is $(1/2)^\infty = 0$.
Thus $\probm_{\mdp,\state_0,\zstrat}(\transient)=1$.
However, the expected number of visits to $\state_0$
is $\ge \sum_{i=1}^\infty \left(\frac{1}{2}\right)^i \cdot 2^i = \infty$.

\section{MD Strategies for Transience}\label{transientMD}
We show that there exist uniformly $\eps$-optimal MD strategies for $\transient$
and that optimal strategies, where they exist, can also be chosen MD.

First we show that there exist $\eps$-optimal
deterministic 1-bit strategies for $\transient$
(in \cref{cor:transient-buchi})
and then we show how to dispense with the 1-bit memory
(in \cref{lem:TransientMD_nonuniform}).

It was shown in \cite{KMST:ICALP2019} that there exist $\eps$-optimal
deterministic 1-bit strategies for B\"uchi objectives in 
\emph{acyclic} countable MDPs (though not in general MDPs).
These 1-bit strategies will be similar to the 1-bit strategies for 
$\transient$ that we aim for in (not necessarily acyclic) countable 
MDPs. In \cref{thm:MDP-one-bit-Buchi} below we first strengthen the 
result from \cite{KMST:ICALP2019} and construct $\eps$-optimal 
deterministic 1-bit strategies for objectives
$\reset(F) \cap \transient$.
From this we obtain deterministic
1-bit strategies for $\transient$ (\cref{cor:transient-buchi}). 

\begin{restatable}{lemma}{thmMDPonebitBuchi}\label{thm:MDP-one-bit-Buchi}
Let $\mdp$ be a countable MDP, $\initstates$ a finite set of initial states,
$F$ a set of states and $\eps>0$.
Then there exists a deterministic 1-bit
strategy for $\reset(F) \cap \transient$ that is $\eps$-optimal from every $\state \in \initstates$.
\end{restatable}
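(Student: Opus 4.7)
The approach is to adapt the construction of $\eps$-optimal deterministic 1-bit strategies for $\reset(F)$ in \emph{acyclic} countable MDPs from \cite{KMST:ICALP2019}. The guiding intuition is that almost every run satisfying $\transient$ is indistinguishable, from the strategy's point of view, from a run in an acyclic MDP: no state is ever revisited infinitely often. So when the objective itself \emph{forces} transience, one should be able to drop the structural acyclicity hypothesis of \cite{KMST:ICALP2019}.

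First I would reduce to a single initial state: since $I$ is finite, producing an $\eps/|I|$-optimal 1-bit strategy from each $s \in I$ individually and merging them (e.g.\ via the tail-objective plastering generalisation invoked at \cref{thm:Ornstein-plastering}) yields a uniform strategy at only a constant-factor cost in $\eps$. Writing $v_s := \valueof{\mdp,\reset(F) \cap \transient}{s}$ for the value of the combined tail objective, I would then pick a summable sequence $\eps_1, \eps_2, \ldots$ with $\sum_n \eps_n \leq \eps$, and for each $n$ an MD strategy $\tau_n$ that, from any state $t$ reachable from $I$, reaches $F$ transiently with probability at least $v_t - \eps_n$. Such $\tau_n$ are obtained by combining the local Bellman structure at each state with the tail-objective plastering, so that the strategies themselves respect $\transient$ en route to the next $F$-visit.

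The 1-bit strategy $\sigma$ is then built by alternating two MD sub-strategies $\pi^{(0)}, \pi^{(1)}$ that implement the two memory modes: the bit is flipped precisely whenever the play enters $F$, so that successive $F$-visits are handled by successive phases. The analysis is a telescoping induction over the sequence of $F$-visits: if the play currently sits at a state with combined value at least $v_{s_0} - \sum_{k<n} \eps_k$, the $n$-th $F$-visit (carried out transiently) succeeds with probability at least $v_{\text{cur}} - \eps_n$, so the probability of infinitely many $F$-visits occurring in a transient manner is bounded below by $v_{s_0} - \sum_n \eps_n \geq v_{s_0} - \eps$. Because $\reset(F) \cap \transient$ is tail, both conjuncts are propagated through the value function simultaneously, so no separate transience argument is needed on the success event.

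The main obstacle I anticipate is justifying that a \emph{single} pair $\pi^{(0)}, \pi^{(1)}$ of MD sub-strategies can serve \emph{all} successive $F$-visits. In the acyclic case of \cite{KMST:ICALP2019} each $F$-visit lives at a fresh state, so different phases inhabit disjoint state-regions and one MD rule suffices automatically. Without acyclicity, the play could in principle return to previously visited states and face a stale choice. I would neutralise this by exploiting the $\transient$ conjunct in the objective itself: revisits to a fixed state infinitely often fall outside the success event, hence contribute nothing to the probability we are tracking, while revisits with \emph{finite} multiplicity only degrade each $\eps_n$-bound by a controlled amount that is absorbed into the summable tail $\sum_n \eps_n$. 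This is where the construction leverages transience as a substitute for acyclicity, and is the technical heart of the proof.
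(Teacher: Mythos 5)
Your high-level intuition matches the paper's: adapt the acyclic construction of \cite{KMST:ICALP2019}, let the $\transient$ conjunct substitute for structural acyclicity, and flip the memory bit on visits to $F$. But the step you yourself flag as ``the technical heart'' --- why a \emph{single} pair of MD sub-strategies, indexed by one bit, can serve all successive $F$-visits when states may be revisited --- is exactly where your argument does not go through, and your proposed resolution is not the right mechanism. The difficulty is not quantitative (an error ``absorbed into the summable tail'') but structural: a memoryless rule at a state $\state$ in memory mode $x$ must prescribe one fixed successor, yet different phases $n$ of your telescoping induction would in general demand different $\eps_n$-optimal choices at that same pair $(\state,x)$, and nothing in your argument prevents phase $n$ and phase $n+2$ (same bit value) from passing through the same state. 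The paper resolves this by first replacing $\formula=\reset(F)\cap\transient$ with a \emph{stronger} objective $\formula'$ built from an increasing sequence of finite ``bubbles'' $K_1\subseteq L_1\subseteq K_2\subseteq\cdots$ around $\initstates$ (via \cref{lem-bubble-extension}): $\formula'$ requires visiting pairwise disjoint sets $F_i\subseteq F\cap K_i\setminus L_{i-1}$ in order and \emph{never returning to $K_{i-1}$ after reaching $F_{i+1}$}. This simultaneously forces transience and makes the phases spatially almost disjoint, so that within each annulus $K_i\setminus K_{i-2}$ a single optimal MD strategy for a bounded total-reward objective suffices, and the bit (interpreted relative to the current annulus as $i$ or $i+1$ modulo $2$) disambiguates which of the two overlapping phases is active. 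One then shows each original $\eps$-optimal $\zstrat_\state$ already attains $\valueof{\mdp,\formula}{\state}-2\eps$ for $\formula'$, and constructs a 1-bit strategy that is \emph{optimal} for $\formula'$. Without some version of this spatial decomposition, your claim that stale choices at revisited states ``only degrade each $\eps_n$-bound by a controlled amount'' has no justification.

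Two further gaps: your opening reduction to a single initial state does not work as stated --- \cref{thm:Ornstein-plastering} merges \emph{MD} strategies for tail objectives, not deterministic 1-bit strategies, and 1-bit strategies built separately for different $\state\in\initstates$ can conflict at shared $(\state,\mathrm{bit})$ pairs; the paper instead runs the whole bubble construction simultaneously for the finitely many $\zstrat_\state$, taking maxima of the bubble radii over $\state\in\initstates$. And you do not address infinite branching at all; the paper handles it with a dedicated gadget at the end of the proof (distinct from the recurrent-ladder reduction of \cref{lem:reduction-finite-branch}, which preserves MD but not deterministic 1-bit strategies).
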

\begin{proof}[Proof sketch]
The full proof can be found in \cref{app:buchi-transient}.
It follows the proof of \cite[Theorem 5]{KMST:ICALP2019}, 
which considers $\reset(F)$ conditions for \emph{acyclic} (and hence universally transient) MDPs.
The only part of that proof that
requires modification is \cite[Lemma 10]{KMST:ICALP2019},
which is replaced here by \cref{lem-bubble-extension}
to deal with general MDPs.

In short, from every $\state \in \initstates$ there exists an
$\eps$-optimal strategy $\zstrat_\state$ for $\formula \eqdef \reset(F) \cap \transient$.
We observe the behavior of the
finitely many $\zstrat_\state$ for $\state \in \initstates$
on an infinite, increasing sequence of finite subsets of $\states$.
Based on \cref{lem-bubble-extension}, we can define a second stronger
objective $\formula' \subseteq \formula$ and show
$\forall_{\state \in \initstates}\,\probm_{\mdp,\state,\zstrat_\state}(\formula') \ge \valueof{\mdp,\formula}{\state}-2\eps$.
We then construct a deterministic 1-bit
strategy $\zstrat'$ that is optimal for $\formula'$ from all
$\state \in \initstates$ and thus $2\eps$-optimal for $\formula$.
Since $\eps$ can be chosen arbitrarily small, the result follows.
\end{proof}

Unlike for the $\transient$ objective alone (see below),
the 1-bit memory is strictly necessary for the
$\reset(F) \cap \transient$ objective in \cref{thm:MDP-one-bit-Buchi}.
The 1-bit lower bound for $\reset(F)$ objectives in \cite{KMST:ICALP2019}
holds even for acyclic MDPs where $\transient$ is trivially true.

\begin{corollary}\label{cor:transient-buchi}
Let $\mdp$ be a countable MDP, $\initstates$ a finite set of initial states,
$F$ a set of states and $\eps>0$.
\begin{enumerate}
\item
If
$\forall \state \in \initstates\ \valueof{\mdp,\reset(F)}{\state} = \valueof{\mdp,\reset(F) \cap \transient}{\state}$
then there exists a deterministic 1-bit
strategy for $\reset(F)$ that is $\eps$-optimal from every
$\state \in \initstates$.
\item
If $\mdp$ is universally transient then there exists a deterministic 1-bit
strategy for $\reset(F)$ that is $\eps$-optimal from every
$\state \in \initstates$.
\item
There exists a deterministic 1-bit
strategy for $\transient$ that is $\eps$-optimal from every
$\state \in \initstates$.
\end{enumerate}
\end{corollary}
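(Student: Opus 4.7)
The plan is to derive each of the three parts of the corollary directly from \cref{thm:MDP-one-bit-Buchi}, which already provides deterministic 1-bit $\eps$-optimal strategies for the \emph{combined} objective $\reset(F)\cap\transient$. The common theme is that, under the hypothesis of each part, the extra $\transient$ conjunct is ``free,'' so that a strategy that is $\eps$-optimal for the conjunction is also $\eps$-optimal for the weaker objective we really care about.

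For part (1), apply \cref{thm:MDP-one-bit-Buchi} to the given $\initstates$, $F$, and $\eps$ to obtain a deterministic 1-bit strategy $\zstrat$ satisfying $\probm_{\mdp,\state,\zstrat}(\reset(F)\cap\transient)\ge \valueof{\mdp,\reset(F)\cap\transient}{\state}-\eps$ for every $\state\in\initstates$. Since $\reset(F)\cap\transient\subseteq\reset(F)$, monotonicity of the measure gives $\probm_{\mdp,\state,\zstrat}(\reset(F))\ge\probm_{\mdp,\state,\zstrat}(\reset(F)\cap\transient)$, and using the hypothesis that $\valueof{\mdp,\reset(F)}{\state}=\valueof{\mdp,\reset(F)\cap\transient}{\state}$ we conclude $\probm_{\mdp,\state,\zstrat}(\reset(F))\ge \valueof{\mdp,\reset(F)}{\state}-\eps$, i.e.\ $\zstrat$ is $\eps$-optimal for $\reset(F)$ from every $\state\in\initstates$.

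For part (2), I would show that the hypothesis of (1) is automatic under universal transience. Indeed, if $\mdp$ is universally transient, then for every strategy $\zstrat$ and every initial $\state\in\initstates$ we have $\probm_{\mdp,\state,\zstrat}(\transient)=1$, whence $\probm_{\mdp,\state,\zstrat}(\reset(F))=\probm_{\mdp,\state,\zstrat}(\reset(F)\cap\transient)$. Taking suprema over $\zstrat$ yields $\valueof{\mdp,\reset(F)}{\state}=\valueof{\mdp,\reset(F)\cap\transient}{\state}$, and part (1) applies verbatim.

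For part (3), the idea is to instantiate \cref{thm:MDP-one-bit-Buchi} with $F\eqdef\states$ (any $F$ with the property that every infinite run visits $F$ infinitely often will do; taking $F$ to be all of $\states$ is simplest). Then $\reset(F)$ holds on every run in $\states^\omega$, so $\reset(F)\cap\transient=\transient$, and the 1-bit strategy furnished by \cref{thm:MDP-one-bit-Buchi} is $\eps$-optimal for $\transient$ from every $\state\in\initstates$. There is no real obstacle in any of the three parts; the content lies entirely in \cref{thm:MDP-one-bit-Buchi}, and the corollary is a packaging step that extracts the three advertised consequences.
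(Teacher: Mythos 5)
Your proposal is correct and follows essentially the same route as the paper: part (1) by monotonicity of the measure plus the value-equality hypothesis, part (2) by observing that universal transience makes that hypothesis automatic, and part (3) by instantiating \cref{thm:MDP-one-bit-Buchi} with $F=\states$ so that $\reset(F)\cap\transient=\transient$. The extra detail you supply (the explicit monotonicity step and the supremum argument in part (2)) is just a fuller write-up of what the paper states more tersely.
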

\begin{proof}
Towards (1), since $\forall \state \in \initstates\ \valueof{\mdp,\reset(F)}{\state} =
\valueof{\mdp,\reset(F) \cap \transient}{\state}$,
strategies that are $\eps$-optimal for $\reset(F) \cap \transient$ are also
$\eps$-optimal for $\reset(F)$.
Thus the result follows from \cref{thm:MDP-one-bit-Buchi}.

Item (2) follows directly from (1), since the precondition always holds in
universally transient MDPs.

Towards (3), let $F \eqdef \states$.
Then we have $\reset(F) \cap \transient = \transient$
and we obtain from \cref{thm:MDP-one-bit-Buchi}
that there exists a deterministic 1-bit
strategy for $\transient$ that is 
$\eps$-optimal from every $\state \in I$.
\end{proof}

Note that every acyclic MDP is universally transient and thus
\cref{cor:transient-buchi}(2)
implies the upper bound
on the strategy complexity of $\reset(F)$ from \cite{KMST:ICALP2019} (but not
vice-versa).

In the next step we show how to dispense with the 1-bit memory and obtain
non-uniform $\eps$-optimal MD strategies for $\transient$.

\begin{lemma}\label{lem:TransientMD_nonuniform}
Let $\mdp=\mdptuple$ be a countable MDP with initial state $\state_0$, and $\eps >0$.
There exists an MD strategy $\zstrat$ that is $\eps$-optimal for $\transient$
from $\state_0$, i.e.,
$\probm_{\mdp,\state_0,\zstrat}(\transient)
\ge
\valueof{\mdp,\transient}{\state_0} - \eps
$.
\end{lemma}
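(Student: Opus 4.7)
The plan is to invoke \cref{cor:transient-buchi}(3) to obtain a deterministic $1$-bit strategy $\tau$ that is $(\eps/2)$-optimal for $\transient$ from $\state_0$, and then to remove the memory bit with at most $\eps/2$ additional loss.

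To handle the latter, I would pass to the product MDP $\mdp' \eqdef \mdp \times \{0,1\}$ where the memory bit is part of the state, with controlled/random classification and transitions inherited from $\mdp$. The $1$-bit strategy $\tau$ corresponds naturally to an MD strategy $\tau'$ on $\mdp'$ starting from $(\state_0, \memconf_0)$. A crucial observation is that transience is preserved under the projection $\mdp' \to \mdp$: by the pigeonhole principle on the two bit values, a state $s \in \states$ is visited infinitely often in an $\mdp$-run iff at least one of $(s, 0), (s, 1)$ is visited infinitely often in the corresponding $\mdp'$-run. Consequently, $\tau'$ is an MD strategy on $\mdp'$ with $\probm_{\mdp', (\state_0, \memconf_0), \tau'}(\transient) = \probm_{\mdp, \state_0, \tau}(\transient) \geq \valueof{\mdp, \transient}{\state_0} - \eps/2$.

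The main step is to derive an MD strategy $\zstrat$ on $\mdp$ from $\tau'$. For each $s \in \zstates$, one must choose between the successors $\tau(s, 0)$ and $\tau(s, 1)$. I would set $\zstrat(s) = \tau(s, m^*(s))$ with $m^*(s) \in \{0, 1\}$ picked so as to preserve transience; a first natural candidate is the bit value maximizing $\valueof{\mdp', \transient}{(s, m)}$, breaking ties via visit probability from $(\state_0, \memconf_0)$ under $\tau'$. The aim is for the Markov chain induced on $\mdp$ by $\zstrat$ to inherit the absence of recurrent classes exhibited by the MC on $\mdp'$ under $\tau'$.

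I expect the hard part to be that a naive ``dominant bit'' choice may reintroduce cycles in the $\mdp$-MC that are absent in the $\mdp'$-MC under $\tau'$, because $\zstrat$ uses the same action at $s$ regardless of context, while $\tau'$ alternates. To overcome this, the selection of $m^*(s)$ must be coordinated across states in a plastering-style iteration: enumerate the reachable controlled states and at each step pick $m^*(s)$ so as to be $\delta_s$-suboptimal with summable $\sum_s \delta_s \leq \eps/2$, relative to the reference strategy $\tau'$. To prove $\eps$-optimality of the resulting $\zstrat$, I would then invoke \cref{lem:structural-transience}(5): since transience is equivalent to finite expected visits to every state, it suffices to bound the expected visit counts under $\zstrat$ in terms of those under $\tau'$ through a summation over states, absorbing the cumulative loss into the $\eps/2$ budget.
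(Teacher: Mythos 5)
Your opening move matches the paper: both start from \cref{cor:transient-buchi}(3) to obtain a deterministic $1$-bit strategy that is $(\eps/2)$-optimal for $\transient$ from $\state_0$ (the paper additionally first reduces to finitely branching MDPs via \cref{lem:reduction-finite-branch}, which it needs later). The product-MDP reformulation is harmless. But the step you yourself flag as ``the hard part'' --- collapsing the two memory modes into a single choice per state --- is exactly where the proposal has a genuine gap, and the plastering-style fix you sketch does not close it. Plastering (as in \cref{thm:Ornstein-plastering}) works because at each round one has, by hypothesis, an $\eps_i^2$-optimal MD strategy available to fix on the set where it performs well; here the choice at each state $s$ is between two discrete options $\tau(s,0)$ and $\tau(s,1)$, and there is no reason either of them is ``$\delta_s$-suboptimal'' for a prescribed summable $\delta_s$: the $1$-bit strategy's performance may depend essentially on using different bits at the same state in different contexts, and you exhibit no per-state selection rule that provably preserves transience. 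Moreover, the appeal to \cref{lem:structural-transience}(5) is misapplied twice over. That lemma characterizes \emph{universally transient} MDPs; the paper explicitly remarks right after it that in a general MDP a strategy can attain $\transient$ almost surely while having \emph{infinite} expected number of visits to some state, so the expected visit counts under the reference strategy $\tau'$ need not be finite and cannot serve as the yardstick you propose (they are indeed infinite wherever $\tau'$ attains value $0$ and loops). In the other direction, if you did bound the expected visits to \emph{every} state you would conclude $\probm_{\mdp,\state_0,\zstrat}(\transient)=1$, which is impossible whenever $\valueof{\mdp,\transient}{\state_0}<1$; some good/bad decomposition of the state space is unavoidable.

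The paper's route around this obstacle is quite different and never selects bits at all. It makes the states where the $1$-bit strategy attains $0$ in both modes into losing sinks, arranges (by flipping the bit where only one mode is winning) that from every remaining state the expected number of visits $R(\state)$ is finite, and then defines a transition cost function: a large one-off cost $K/(1-\valueof{\mdp,\transient}{\state_0}+\eps')$ for entering the bad region and cost $2^{-i(\state)}/R(\state)$ for entering a good state $\state$. The modified $1$-bit strategy witnesses that expected total cost at most $K+1$ is attainable; a cost-optimal MD strategy exists for finitely branching MDPs by \cite[Theorem~7.3.6]{Puterman:book}, and any strategy of finite expected cost must both rarely enter $\states_{\it bad}$ and almost surely avoid visiting any state of $\states_{\it good}$ infinitely often. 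To salvage your outline you would need to replace the bit-selection step by an argument of this kind, i.e., one that produces the MD strategy from a separate optimization problem rather than by projecting the $1$-bit strategy.
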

\begin{proof}
By \cref{lem:reduction-finite-branch} it suffices to prove
the property for finitely branching MDPs. Thus without restriction
in the rest of the proof
we assume that $\mdp$ is finitely branching.

Let $\eps' \eqdef \eps/2$.
We instantiate \cref{cor:transient-buchi}(3) with $I \eqdef \{\state_0\}$
and obtain that there exists an $\eps'$-optimal deterministic 1-bit
strategy $\hat{\zstrat}$ for $\transient$ from $\state_0$.

We now construct a slightly modified MDP $\mdp'$ as follows.
Let $\states_{\it bad} \subseteq \states$ be the subset of states where
$\hat{\zstrat}$ attains zero for $\transient$ in \emph{both} memory modes,
i.e.,
$\states_{\it bad} \eqdef \{\state \in \states \mid
\probm_{\mdp,\state,\zstrat[0]}(\transient)=\probm_{\mdp,\state,\zstrat[1]}(\transient)=0\}$.
Let $\states_{\it good} \eqdef \states\setminus\states_{\it bad}$.
We obtain $\mdp'$ from $\mdp$ by making all states in $\states_{\it bad}$
losing sinks (for $\transient$), by deleting all outgoing edges and adding a
self-loop instead.
It follows that
\begin{equation}\label{eq:hat-no-change}
\probm_{\mdp,\state_0,\hat{\zstrat}}(\transient)
= \probm_{\mdp',\state_0,\hat{\zstrat}}(\transient)
\end{equation}
\vspace{-9mm}
\begin{equation}\label{eq:M-better-Mprime}
\forall \zstrat.\ \probm_{\mdp,\state_0,\zstrat}(\transient)
\ge \probm_{\mdp',\state_0,\zstrat}(\transient)
\end{equation}

In the following we show that it is possible to play in such a way that,
for every $\state \in \states_{\it good}$, the expected number of visits to
$\state$ is \emph{finite}.
We obtain the deterministic 1-bit strategy $\zstrat'$ in $\mdp'$
by modifying $\hat{\zstrat}$ as follows.
In every state $\state$ and memory mode $x \in \{0,1\}$ where
$\hat{\zstrat}[x]$ attains $0$ for $\transient$
and $\hat{\zstrat}[1-x]$ attains $>0$
the strategy $\zstrat'$ sets the memory bit to $1-x$.
(Note that only states $\state \in \states_{\it good}$ can be affected by this change.)
It follows that
\begin{equation}\label{eq:prime-better-hat}
\forall \state \in \states.\ \probm_{\mdp',\state,\zstrat'}(\transient)
\ge \probm_{\mdp',\state,\hat{\zstrat}}(\transient)
\end{equation}

Moreover, from all states in $\states_{\it good}$ in $\mdp'$ the strategy $\zstrat'$
attains a strictly positive probability of $\transient$ in \emph{both} memory
modes,
i.e., for all $\state \in \states_{\it good}$ we have 
\[
t(\state,\zstrat') \eqdef \min_{x \in \{0,1\}}\probm_{\mdp',\state,\zstrat'[i]}(\transient)
>0.
\]
Let $r(\state,\zstrat',x)$ be the probability, when playing $\zstrat'[x]$ from
state $\state$, of reaching $\state$ again in the \emph{same} memory mode $x$.
For every $\state \in \states_{\it good}$ we have
$r(\state,\zstrat',x) < 1$, since $t(\state,\zstrat') >0$.

Let $R(\state)$ be the expected number of visits to state $\state$
when playing $\zstrat'$ from $\state_0$ in $\mdp'$,
and $R_x(\state)$ the expected number of visits to $\state$
in memory mode $x \in \{0,1\}$.
For all $\state \in \states_{\it good}$ we have that
\begin{equation}\label{eq:Rfinite}
R(\state)
=
R_0(\state) + R_1(\state)
\le
\sum_{n=1}^\infty n \cdot r(\state,\zstrat',0)^{n-1}
+
\sum_{n=1}^\infty n \cdot r(\state,\zstrat',1)^{n-1}
<
\infty
\end{equation}
where the first equality holds by linearity of expectations.
Thus the expected number of visits to $\state$ is \emph{finite}.

Now we upper-bound the probability of visiting $\states_{\it bad}$.
We have
$
\probm_{\mdp',\state_0,\zstrat'}(\transient)
\ge
\probm_{\mdp',\state_0,\hat{\zstrat}}(\transient)
=
\probm_{\mdp,\state_0,\hat{\zstrat}}(\transient)
\ge
\valueof{\mdp,\transient}{\state_0} - \eps'
$
by \eqref{eq:prime-better-hat}, \eqref{eq:hat-no-change} and the $\eps'$-optimality of $\hat{\zstrat}$.
Since states in $\states_{\it bad}$ are losing sinks in $\mdp'$,
it follows that
\begin{equation}\label{eq:bound-on-sbad}
\probm_{\mdp',\state_0,\zstrat'}(\eventually \states_{\it bad}) \le
1-\probm_{\mdp',\state_0,\zstrat'}(\transient)
\le
1-\valueof{\mdp,\transient}{\state_0} + \eps'
\end{equation}

We now augment the MDP $\mdp'$ by assigning costs to transitions as follows.
Let $i: \states \to \N$ be an enumeration of the state space, i.e., a
bijection.
Let $\states_{\it good}' \eqdef \{\state \in \states_{\it good} \mid
R(\state)>0\}$ be the subset of states in $\states_{\it good}$
that are visited with non-zero probability when playing $\zstrat'$ from
$\state_0$.
Each transition $\state' \to \state$ is assigned a cost:
\begin{itemize}
\item
If $\state' \in \states_{\it bad}$ then $\state \in \states_{\it bad}$ by
def.\ of $\mdp'$. We assign cost $0$.
\item
If $\state' \in \states_{\it good}$ and $\state \in \states_{\it bad}$
we assign cost $K/(1-\valueof{\mdp,\transient}{\state_0} + \eps')$
for $K \eqdef (1+\eps')/\eps'$.
\item
If $\state' \in \states_{\it good}$ and $\state \in \states_{\it good}'$
we assign cost $2^{-i(\state)}/R(\state)$.
This is well defined, since $R(\state)>0$.
\item
$\state' \in \states_{\it good}$ and $\state \in \states_{\it good}\setminus\states_{\it good}'$
we assign cost $1$.
\end{itemize}
Note that all transitions leading to states in $\states_{\it good}$ are
assigned a non-zero cost, since $R(\state)$ is finite by \eqref{eq:Rfinite}.

When playing $\zstrat'$ from $\state_0$ in $\mdp'$, the expected total cost
is upper-bounded by
\[
\probm_{\mdp',\state_0,\zstrat'}(\eventually \states_{\it bad}) \cdot
K/(1-\valueof{\mdp,\transient}{\state_0} + \eps')
+
\sum_{\state \in \states_{\it good}'} R(\state) \cdot 2^{-i(\state)}/R(\state)
\]
The first part is $\le K$ by \eqref{eq:bound-on-sbad}
and the second part is $\le 1$, since $R(\state) < \infty$ by
\eqref{eq:Rfinite}.
Therefore the expected total cost is $\le K + 1$, i.e., 
$\zstrat'$ witnesses that it is possible to attain a finite expected cost
that is upper-bounded by $K+1$.

Now we define our MD strategy $\zstrat$.
Let $\zstrat$ be an optimal MD strategy on $\mdp'$ (from $\state_0$) that minimizes the
expected cost. It exists, as a finite
expected cost is attainable and 
$\mdp'$ is finitely branching; see \cite[Theorem~7.3.6]{Puterman:book}. 

We now show that $\zstrat$ attains $\transient$ with high probability in $\mdp'$
(and in $\mdp$).
Since $\zstrat$ is cost-optimal, its attained cost from $\state_0$ is
upper-bounded by that of $\zstrat'$, i.e., $\le K+1$.
Since the cost of entering $\states_{\it bad}$ is
$K/(1-\valueof{\mdp,\transient}{\state_0} + \eps')$,
we have
$\probm_{\mdp',\state_0,\zstrat}(\eventually \states_{\it bad})
\cdot K/(1-\valueof{\mdp,\transient}{\state_0} + \eps')
\le K+1$
and thus
\begin{equation}\label{eq:bound-on-sbad2}
\probm_{\mdp',\state_0,\zstrat}(\eventually \states_{\it bad}) \le
\frac{K+1}{K}(1-\valueof{\mdp,\transient}{\state_0} + \eps')
\end{equation}
For every state $\state \in \states_{\it good}$, all transitions into $\state$
have the same fixed non-zero cost. Thus every run that visits some
state $\state \in \states_{\it good}$ infinitely often has infinite cost.
Since the expected cost of playing $\zstrat$ from $\state_0$ is $\le K+1$, such runs must be a
null-set, i.e.,
\begin{equation}\label{eq:good-nullset}
\probm_{\mdp',\state_0,\zstrat}(\neg\transient\,\wedge\,\always \states_{\it
good}) = 0
\end{equation}
Thus
\begin{align*}
&\probm_{\mdp,\state_0,\zstrat} (\transient)\\
& \ge \probm_{\mdp',\state_0,\zstrat}(\transient)  & \text{by \eqref{eq:M-better-Mprime}}\\
& = 1- \probm_{\mdp',\state_0,\zstrat}(\eventually \states_{\it bad})
& \text{by \eqref{eq:good-nullset}} \\
& \ge 1- \frac{K+1}{K}(1-\valueof{\mdp,\transient}{\state_0} + \eps')
& \text{by \eqref{eq:bound-on-sbad2}}\\
& = \valueof{\mdp,\transient}{\state_0} - \eps' - (1/K)(1-\valueof{\mdp,\transient}{\state_0} + \eps')\\
& \ge \valueof{\mdp,\transient}{\state_0} - \eps' - (1/K)(1+\eps')\\
& = \valueof{\mdp,\transient}{\state_0} - 2\eps' & \text{def.\ of $K$}\\
& = \valueof{\mdp,\transient}{\state_0} - \eps   & \text{def.\ of $\eps'$}
\end{align*}

\ignore{
{\bf Old Outline:}
\begin{enumerate}
\item
By instantiating \cref{thm:MDP-one-bit-Buchi} with $I=\{\state_0\}$ and
$F=\states$, we obtain that there exists an $\eps$-optimal deterministic 1-bit
strategy $\hat{\zstrat}$ for $\transient$ from $\state_0$.
\item
We obtain the deterministic 1-bit strategy $\zstrat$ by modifying $\hat{\zstrat}$
as follows: First make every state where $\hat{\zstrat}$ attains $0$
in \emph{both} memory modes a losing sink.
Then, in any state $s$ and memory mode $x \in \{0,1\}$ where
$\hat{\zstrat}_x$ attains $0$ and $\hat{\zstrat}_{1-x}$ attains $>0$ then
in $s$ always switch the memory mode to $1-x$.
We show that that attainment of $\zstrat$ from $s_0$ (and from every other
state) is at least as high as that of $\hat{\zstrat}$, i.e., $\zstrat$
attains $\ge 1-\eps$ from $\state_0$.
(Note that there is only one round of this strategy improvement,
because of the first step.)
Then, in any state, $\zstrat$ either attains $0$ in both memory modes or
$>0$ in both memory modes.
\item
We show that for every state $\state \in \states$
where the attainment of $\zstrat$ is $>0$ (it can only be zero if it is zero
in both memory modes) the probability 
of re-visiting $\state$ is $\le \delta_\state < 1$.
Let $\states' \subseteq \states$ be the set of these states.
\item
Since $\zstrat$ attains $\ge 1-\eps$ from $\state_0$, and $0$ from states
in $\states\setminus\states'$, the probability of reaching
$\states\setminus\states'$ is $\le \eps$.
\item
When playing $\zstrat$ from $\state_0$, for every state $\state\in\states'$,
let $R(\state)$ be the expected
number of visits to $\state$.
We show that $R(\state) < \infty$ for all $\state\in\states'$.
\item
Define a modified MDP $\mdp'$ with a cost model: Entering a state in
$\states\setminus\states'$ incurs a one-time cost of $K/\eps$ (and henceforth
no cost).
To each other state $\state$ we assign a fixed cost for each visit of
$1/R(\state) \cdot 2^{-i(s)}$ where $i(s)$ is a unique index number of $\state$.
\item
  When playing $\zstrat$ from $\state_0$ in $\mdp'$, the expected cost
  is upper-bounded by the sum of $\eps \cdot (K/\eps)$ (if entering
  $\states\setminus\states'$)
  and $\sum_\state R(\state) \cdot 1/R(\state) \cdot 2^{-i(s)}$ (the states costs).
  This is bounded by $K+1$, thus proving that such a bounded cost is possible.
\item
  We define an MD strategy $\zstrat'$ on $\mdp'$ that attains a cost $\le K+1$
  by playing cost-optimally.
\item
  We show that playing $\zstrat'$ on $\mdp$ attains $\transient$ with high
  probability:
  Since the cost is $\le K+1$, the probability $p$ of entering
  $\states\setminus\states'$ satisfies $p \cdot (K/\eps) \le 1+K$
  and thus $p \le (1+K)/K \cdot \eps \le 2\eps$ (and even close to $\eps$ for large enough $K$).
  Almost all the other runs (those that are staying in $\states'$) must
  satisfy $\transient$ (since re-visiting some state infinitely often with
  some probability $>0$ would incur an infinite cost, due to the $>0$ state cost).
\item
  Thus the MD strategy $\zstrat'$ attains $\transient$ in $\mdp$ with
  probability $\ge 1-2\eps$.
\end{enumerate}
}
\end{proof}

Now we lift the result of \cref{lem:TransientMD_nonuniform} from non-uniform
to uniform strategies (and to optimal strategies) and obtain the following theorem.
The proof is a generalization of a ``plastering'' construction by
Ornstein \cite{Ornstein:AMS1969}
(see also \cite{KMSTW2020}) from reachability to tail objectives,
which works by fixing MD strategies on ever expanding subsets of the state space.

\begin{restatable}{theorem}{thmOrnsteinplastering}\label{thm:Ornstein-plastering}
Let $\mdp=\mdptuple$ be a countable MDP, and let $\formula$ be an objective that is tail in~$\mdp$.
Suppose for every $s \in S$ there exist $\eps$-optimal MD strategies for~$\formula$.
Then:
\begin{enumerate}
\item There exist uniform $\eps$-optimal MD strategies for~$\formula$.
\item There exists a single MD strategy that is optimal from every state that has an optimal strategy.
\end{enumerate} 
\end{restatable}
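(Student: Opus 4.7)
The plan is to generalize Ornstein's plastering construction \cite{Ornstein:AMS1969}, originally devised for reachability, to arbitrary tail objectives in countable MDPs. The high-level idea is to build a single MD strategy $\zstrat$ by committing choices stage by stage on an expanding ``committed'' region of the state space, using the given pointwise $\eps$-optimal MD strategies as local building blocks.

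To set up: enumerate the controlled states as $s_0,s_1,\dots$ and fix a summable sequence with $\sum_i \eps_i \le \eps$ (for instance $\eps_i \eqdef \eps \cdot 2^{-i-1}$). By hypothesis, for each $i$ there is an MD strategy $\tau_i$ with $\probm_{\mdp,s_i,\tau_i}(\formula) \ge \valueof{\mdp,\formula}{s_i} - \eps_i$. Now inductively construct a chain $C_0 \subseteq C_1 \subseteq \cdots$ of subsets of $\states$ on which the partial MD strategy is defined and closed. Start with $C_0 = \emptyset$. At stage $i$, if $s_i \in C_i$, set $C_{i+1} \eqdef C_i$. Otherwise form the hybrid MD strategy $\hat\tau_i$ that agrees with the partial $\zstrat$ on $C_i\cap\zstates$ and with $\tau_i$ on $\zstates\setminus C_i$; let $R_i$ be the set of states reachable from $s_i$ under $\hat\tau_i$; extend $\zstrat$ by $\zstrat(s) \eqdef \tau_i(s)$ for each $s\in (R_i\setminus C_i)\cap\zstates$, and put $C_{i+1} \eqdef C_i \cup R_i$. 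Since each $s_i$ enters the committed region by stage $i+1$, the limit $\zstrat$ is a well-defined MD strategy on all of $\zstates$ (extended arbitrarily at states that are never reached).

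For part (1), fix an arbitrary $s\in\states$ and let $j$ be the first stage with $s\in C_{j+1}$; by closure of $C_{j+1}$ under the committed choices, the play from $s$ under $\zstrat$ never leaves $C_{j+1}$ and in fact coincides there with $\hat\tau_j$. The target bound $\probm_{\mdp,s,\zstrat}(\formula) \ge \valueof{\mdp,\formula}{s}-\eps$ is obtained by proving, by induction on $j$, the stronger statement that for every $s\in C_{j+1}$, $\probm_{\mdp,s,\hat\tau_j}(\formula) \ge \valueof{\mdp,\formula}{s}-\sum_{k\le j}\eps_k$. The main obstacle is this inductive step: a play from $s_j$ under $\hat\tau_j$ may enter a region $C_k$ committed at some earlier stage $k<j$, where $\hat\tau_j$ disagrees with $\tau_j$ and reverts to earlier commitments. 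The tail property of $\formula$ is essential here: conditioning on the first time the play enters such a region at some state $t$, the forward probability of $\formula$ is exactly $\probm_{\mdp,t,\hat\tau_{k(t)}}(\formula)$, which is inductively $\ge \valueof{\mdp,\formula}{t}-\sum_{\ell\le k(t)}\eps_\ell$. Integrating against the entry distribution, together with the $\eps_j$-optimality of $\tau_j$ at $s_j$ (which gives $\probm_{\mdp,s_j,\tau_j}(\formula)\ge \valueof{\mdp,\formula}{s_j}-\eps_j$), and then using the supermartingale property $\valueof{\mdp,\formula}{s_j} \ge \mathbb{E}_{s_j,\tau_j}[\valueof{\mdp,\formula}{\cdot}]$ of the value function to absorb the values at the entry states into $\valueof{\mdp,\formula}{s_j}$, yields the inductive bound with exactly one new error term $\eps_j$ added per stage.

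For part (2), let $A \subseteq \states$ be the set of states admitting an optimal strategy for $\formula$. The plastering is modified: at every stage $i$ where $s_i \in A$, use an MD \emph{optimal} strategy $\tau_i^*$ from $s_i$ in place of $\tau_i$; such a strategy exists by a standard selection argument combining the assumed existence of $\eps$-optimal MD strategies (for arbitrarily small $\eps$) with the existence of some optimal strategy from $s_i$. The corresponding error contribution $\eps_i$ is effectively $0$ at these stages. The same inductive analysis then shows that the resulting MD strategy $\zstrat$ satisfies $\probm_{\mdp,s,\zstrat}(\formula) = \valueof{\mdp,\formula}{s}$ for every $s\in A$, since all stages that contribute to the error budget along the trajectory starting from such an $s$ use optimal (not merely $\eps$-optimal) substrategies. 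Taking a common sequence $\eps_i$ small enough to simultaneously witness part (1) from the $\eps$-optimal side, one obtains a single MD strategy which is optimal from every state in $A$, as required.
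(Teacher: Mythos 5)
Your part (1) has a genuine gap at the point where you extend $\zstrat$ by $\tau_j$ on \emph{all} of $(R_j\setminus C_j)\cap\zstates$. The strategy $\tau_j$ is only guaranteed to be $\eps_j$-optimal \emph{from $s_j$}; at a state $t$ reachable from $s_j$ under $\tau_j$ only with some tiny probability $\delta$, $\tau_j$ may underachieve $\valueof{\mdp,\formula}{t}$ by as much as $\eps_j/\delta$, i.e.\ by an amount arbitrarily close to~$1$. Your construction nevertheless commits $\tau_j$'s choice at $t$ permanently (and the stage $m$ with $s_m=t$ is then skipped because $t\in C_m$ already), so the limit strategy can be far from $\eps$-optimal at $t$. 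Concretely: take $\formula=\reach{\{w\}}$ with $w$ a winning sink, let $s_0\transition r$ with $r$ random going to $w$ with probability $1-\delta$ and to a controlled state $t$ with probability $\delta\le\eps_0$, and let $t\transition w$, $t\transition\ell$ with $\ell$ a losing sink. The MD strategy $\tau_0$ with $\tau_0(t)=\ell$ is $\eps_0$-optimal from $s_0$, your stage $0$ commits $\zstrat(t)=\ell$, and the final strategy attains $0$ from $t$ although $\valueof{\mdp,\formula}{t}=1$. Consequently the inductive claim ``for every $s\in C_{j+1}$, $\probm_{\mdp,s,\hat\tau_j}(\formula)\ge\valueof{\mdp,\formula}{s}-\sum_{k\le j}\eps_k$'' is false as stated. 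The paper's proof avoids exactly this: in round $i$ it takes an $\eps_i^2$-optimal MD strategy $\sigma$ and fixes it only on the set $G=\{s\mid\probm_{\mdp_i,s,\sigma}(\formula)\ge\valueof{\mdp_i}{s}-\eps_i\}$ of states where $\sigma$ provably does well; the quadratic gap guarantees $\probm_{\mdp_i,s_i,\sigma}(\reach{S\setminus G})\le\eps_i$ (so progress from $s_i$ is preserved), while restricting the fixing to $G$ guarantees that every state's value in the restricted MDP $\mdp_{i+1}$ drops by at most $\eps_i$. Some such ``fix only where the strategy is good'' device is indispensable.

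Part (2) is also not established: the assertion that an MD \emph{optimal} strategy from $s_i$ ``exists by a standard selection argument'' is essentially the statement to be proved (even for reachability this is Ornstein's nontrivial theorem), and it does not follow routinely from the existence of $\eps$-optimal MD strategies together with the existence of some optimal strategy; moreover, even granting it, the same commitment-on-badly-played-states problem as above would break uniformity. The paper instead passes to the conditioned MDP of \cite{KMSW2017}, which reduces optimality to almost-sure winning, applies item~1 with $\eps=\tfrac12$, and upgrades $\tfrac12$-optimality to almost-sure winning via L\'evy's zero-one law, using that $\formula$ is tail.
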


\begin{restatable}{theorem}{thm:TransientMD}\label{thm:TransientMD}
In every countable MDP there exist uniform $\eps$-optimal MD strategies for
$\transient$.
Moreover, there exists a single MD strategy that is optimal for $\transient$ from every state that has an optimal strategy.
\end{restatable}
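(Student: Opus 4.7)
The plan is to obtain this theorem as a direct corollary of the two main building blocks already established in the section: Lemma~\ref{lem:TransientMD_nonuniform} provides $\eps$-optimal MD strategies from any single initial state, and Theorem~\ref{thm:Ornstein-plastering} lifts such pointwise existence statements to uniform strategies (and to a single MD strategy that is optimal from every state that admits any optimal strategy), provided the objective is tail.

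First, I would recall the observation made in Section~\ref{sec:transientPre} that the objective $\transient = \bigwedge_{s \in S} \eventually\always \neg s$ is closed under adding or removing finite prefixes of runs, so it is a tail objective in every countable MDP $\mdp$. Second, I would invoke Lemma~\ref{lem:TransientMD_nonuniform}: for every state $s \in S$ and every $\eps > 0$ there exists an MD strategy $\zstrat_s$ with $\probm_{\mdp,s,\zstrat_s}(\transient) \ge \valueof{\mdp,\transient}{s} - \eps$. This verifies the hypothesis of Theorem~\ref{thm:Ornstein-plastering} with $\formula = \transient$.

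Third, applying Theorem~\ref{thm:Ornstein-plastering}(1) immediately yields the existence of a uniform $\eps$-optimal MD strategy for $\transient$, and part~(2) of the same theorem yields a single MD strategy that is optimal for $\transient$ from every state at which an optimal strategy exists.

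The only substantive step is verifying the two hypotheses of the plastering theorem; both are available off the shelf here, so there is no genuine obstacle. One minor subtlety to be careful about is that Lemma~\ref{lem:TransientMD_nonuniform} relies on the reduction from infinitely branching to finitely branching MDPs (Lemma~\ref{lem:reduction-finite-branch}), but this reduction was already absorbed into the statement of that lemma, so no extra work is needed. Hence the theorem follows.
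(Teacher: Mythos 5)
Your proposal is correct and follows exactly the paper's own argument: the paper likewise notes that $\transient$ is a tail objective and derives the theorem immediately from \cref{lem:TransientMD_nonuniform} together with both parts of \cref{thm:Ornstein-plastering}. No gaps.
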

\begin{proof} 
Immediate from \cref{lem:TransientMD_nonuniform,thm:Ornstein-plastering},
since $\transient$ is a tail objective.
\end{proof}

%


\section{Strategy Complexity in Universally Transient MDPs}
\label{sec:parity}
The strategy complexity of parity objectives in general MDPs is known \cite{KMST2020c}.
Here we show that some parity objectives have a lower strategy complexity
in universally transient MDPs.
It is known \cite{KMST:ICALP2019}
that there are acyclic (and hence universally transient) MDPs
where $\eps$-optimal strategies for $\cParity{\{1,2\}}$
(and optimal strategies for $\cParity{\{1,2,3\}}$, resp.)
require $1$ bit.

We show that, for all simpler parity objectives in the Mostowski hierarchy \cite{Mostowski:84},
universally transient MDPs admit uniformly ($\eps$-)optimal MD strategies
(unlike general MDPs \cite{KMST2020c}).
These results (\cref{thm:012quant,thm:coBuchi}) ultimately rely on the existence of
uniformly $\eps$-optimal strategies for safety objectives.
While such strategies always exist for finitely branching MDPs -- simply pick a value-maximal successor --
this is not the case for infinitely branching MDPs \cite{KMSW2017}.
However, we show that universal transience implies the existence of 
uniformly $\eps$-optimal strategies for safety objectives even for \emph{infinitely branching} MDPs.

\newcommand{\revisit}[1]{{\it Re}(#1)} 
\newcommand{\NuOfRevisits}[1]{{\it R}(#1)} 

\begin{restatable}{theorem}{thmepsoptimalsafety}\label{thm:eps-optimal-safety}
For every universally transient countable MDP, safety objective and $\eps>0$
there exists a uniformly $\epsilon$-optimal MD strategy.
\end{restatable}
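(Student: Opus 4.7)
I would split the proof into a reduction step, a non-uniform step, and an appeal to plastering. First, I would make $T$ absorbing: replacing outgoing transitions from $T$ by self-loops does not change the set of runs in $\safety{T}$ (once one has visited $T$, safety is already lost), and it turns $\safety{T}$ into a tail objective, which is what the plastering theorem requires. Then, by \cref{thm:Ornstein-plastering}(1), it suffices to show that from every $\state_0 \in \states$ and every $\eps > 0$ there exists an MD strategy that is $\eps$-optimal for $\safety{T}$ from $\state_0$.

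For the non-uniform case I would build the MD strategy greedily from the Bellman equation $v(\state) = \sup_{\state \to \state'} v(\state')$ for controlled $\state \notin T$, where $v(\state) \eqdef \valueof{\safety{T}}{\state}$. Universal transience, via \cref{lem:structural-transience}(4), supplies a function $B : \states \to [1, \infty)$ bounding the expected number of visits to $\state$ under \emph{any} strategy from \emph{any} starting state. Fix a bijection $i : \states \to \N_{\ge 1}$ and set the per-state approximation budget $\delta_\state \eqdef \eps/(2^{i(\state)+1} B(\state))$. Define $\zstrat$ at each controlled $\state \notin T$ by picking a successor with $v(\zstrat(\state)) \ge v(\state) - \delta_\state$; the choice at $\state \in T$ is irrelevant and at random states is determined. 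The analysis treats $v(X_n)$ as an approximate supermartingale along the run $(X_n)$ under $\zstrat$: Bellman gives equality at random and at absorbing target states, while at controlled states one loses at most $\delta_\state$ per visit. Iterating yields
\[
\expectval_{\state_0,\zstrat}[v(X_n)] \;\ge\; v(\state_0) - \sum_{\state} \delta_\state R_n(\state),
\]
where $R_n(\state) = \sum_{k=0}^{n-1} \probm[X_k = \state]$ is the expected number of visits to $\state$ in the first $n$ steps. Universal transience bounds $R_n(\state) \le B(\state)$, so the penalty telescopes to $\sum_\state \eps \cdot 2^{-i(\state)-1} \le \eps/2$ uniformly in $n$. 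Because $v \le 1$ and $v \equiv 0$ on the sink $T$, one has $v(X_n) \le \mathbbm{1}[X_n \notin T]$ pointwise, hence $\expectval[v(X_n)] \le \probm[X_n \notin T]$; and since $T$ is absorbing, the events $\{X_n \notin T\}$ form a decreasing sequence with intersection $\safety{T}$, so $\probm[X_n \notin T] \searrow \probm_{\state_0,\zstrat}(\safety{T})$. Letting $n \to \infty$ gives $\probm_{\state_0,\zstrat}(\safety{T}) \ge v(\state_0) - \eps/2 \ge v(\state_0) - \eps$, as required.

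The main obstacle is infinite branching: without additional hypotheses, safety in infinitely branching MDPs generally requires infinite memory \cite{KMSW2017}, because value-maximal successors may not exist and approximation errors accumulate over unboundedly many revisits. The universal-transience hypothesis is invoked exactly to produce the uniform visit bound $B(\state)$, which allows the per-visit slacks $\delta_\state$ to be chosen summable so that their total effect stays below $\eps$. Once the non-uniform case is settled, the plastering theorem for tail objectives upgrades it to a uniform MD strategy for free.
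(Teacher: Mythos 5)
Your proof is correct and follows essentially the same route as the paper: the same greedy MD strategy with per-state slack $\eps/(2^{\iota(\state)+1}B(\state))$, the same cost/supermartingale accounting of the value along the run using the transience-supplied bound on expected visits, and the same limit argument via $v(X_n)\le\mathbbm{1}[X_n\notin T]$ and the sink property. The only difference is that your appeal to \cref{thm:Ornstein-plastering} is unnecessary --- the strategy you construct is defined independently of $\state_0$ and your analysis already gives $\eps$-optimality from every start state, so it is uniform for free, which is exactly how the paper concludes.
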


%
%
\begin{proof}
Let $\mdp=\mdptuple$ be a universally transient MDP and $\eps>0$.
Assume w.l.o.g.\ that the target $\reachset\subseteq\states$
of the objective $\formula=\safety{T}$
is a (losing) sink
and let $\iota:\states\to\N$ be an enumeration of the state space $\states$.

By \cref{lem:structural-transience}(3),
for every state $\state$ we have
$\revisit{\state} \eqdef
\sup_{\zstrat}\probm_{\mdp,\state,\zstrat}(\next\eventually(\state)) < 1$
and thus
$\NuOfRevisits{\state} \eqdef \sum_{i=0}^\infty \revisit{\state}^i < \infty$.
This means that, independent of the chosen strategy,
$\revisit{\state}$ upper-bounds the chance to return to $\state$,
and $\NuOfRevisits{\state}$ bounds the expected number of visits to $\state$.

Suppose that $\zstrat$ is an MD strategy which, at any state $\state\in\zstates$, picks a
successor $\state'$ with
$$\valueof{}{\state'}\quad\ge\quad \valueof{}{\state} -
\frac{\eps}{2^{\iota(\state)+1} \cdot \NuOfRevisits{\state}}.$$
This is possible even if $\mdp$ is infinitely branching, by the definition of
value and the fact that $\NuOfRevisits{\state} < \infty$.
We show that 
$\probm_{\mdp,\state_0,\zstrat}(\safety{\reachset}) \ge \valueof{}{\state_0} -\eps$
holds for every initial state $\state_0$, 
which implies the claim of the theorem.

\medskip
Towards this, we define a function $\cost$ that labels each transition in the MDP with a real-valued cost:
For every controlled transition $\state \transition \state'$ let
$\cost((\state,\state')) \eqdef \valueof{}{\state} - \valueof{}{\state'} \ge
0$.
Random transitions have cost zero.
We will argue that when playing $\zstrat$ from any start state $\state_0$, its
attainment w.r.t.\ the objective $\safety{\reachset}$
equals the value of $\state_0$ minus the expected total cost,
and that this cost is bounded by $\eps$.

For any $i\in\N$ let us write $s_i$ for the random variable denoting the state just after step $i$,
and $\costRV(i)\eqdef \cost(\state_{i},\state_{i+1})$ for the cost of step $i$ in a random run.
%
We observe that under $\zstrat$ the expected total cost is bounded in the
limit, i.e.,
\begin{equation}\label{eq:limcost-s}
    \lim_{n \to \infty} \expectation\left(\sum_{i=0}^{n-1}\costRV(i)\right) \le \eps.
\end{equation}
We moreover note that for every $n$,
\begin{equation}\label{eq:cost-s}
    \expectation(\valueof{}{s_n}) = \expectation(\valueof{}{s_0}) - \expectation\left(\sum_{i=0}^{n-1}\costRV(i)\right).
\end{equation}
Full proofs of the above two equations can be found in \cref{app-parity}.
Together they imply 
    \begin{equation}
        \label{eq:exi-lim-s}
        \liminf_{n\to\infty} \expectation(\valueof{}{\state_n})
        =
    \valueof{}{s_0} - \lim_{n \to \infty} \expectation\left(\sum_{i=0}^{n-1}\cost(i)\right)
        \ge \valueof{}{\state_0} - \eps.
    \end{equation}
Finally, to show the claim let $[\state_n\notin\reachset] : \states^\omega \to \{0,1\}$ be the random variable that indicates that the $n$-th state is not in the target set $\reachset$.
Note that $[\state_n\notin\reachset] \ge \valueof{}{\state_n}$ because target states have value $0$.
We have:
\begin{align*}
    \probm_{\mdp,s_0,\sigma}(\safety{\reachset})
 \quad&=\quad\probm_{\mdp,s_0,\sigma}\left(\bigwedge_{i=0}^\infty{\next^i \neg \reachset}\right)
 && \text{semantics of~$\safety{\reachset}=\always\neg \reachset$} \\
 & =\quad\smashoperator{\lim_{n\to\infty}} \probm_{\mdp,s_0,\sigma}\left(\bigwedge_{i=0}^n \next^i \neg \reachset \right)
 && \text{continuity of measures} \\
 & =\quad \smashoperator{\lim_{n\to\infty}} \probm_{\mdp,s_0,\sigma}(\next^n \neg \reachset)
 && \text{$\reachset$ is a sink} \\
 & =\quad\smashoperator{\lim_{n\to\infty}} \expectation([\state_n\notin\reachset])
 && \text{definition of $[\state_n\notin\reachset]$} \\
& \ge\quad \liminf_{n\to\infty} \expectation(\valueof{}{\state_n})
 && \text{as $[\state_n\notin\reachset] \ge \valueof{}{\state_n}$}\\
 & \ge\quad \valueof{}{\state_0}-\eps
 && \text{\cref{eq:exi-lim-s}.}
\qedhere
\end{align*}
\end{proof}

We can now combine \cref{thm:eps-optimal-safety} with the results from \cite{KMST2020c}
to show the existence of MD strategies assuming universal transience.

\begin{theorem}
\label{thm:012quant}
For universally transient MDPs 
optimal strategies for $\cParity{\{0,1,2\}}$, where they exist,
can be chosen uniformly MD.

Formally, let $\mdp$ be a universally transient MDP with states $\states$,
$\coloring:\states\to \{0,1,2\}$, and
$\formula=\Parity{\coloring}$.
There exists an MD strategy $\zstrat'$ that is optimal for all states $\state$
that have an optimal strategy:
$\big(
\exists \zstrat \in \zstratset.\,
\probm_{\mdp,\state,\zstrat}(\formula) = \valueof{\mdp}{s}
\big)
\implies
\probm_{\mdp,\state,\zstrat'}(\formula) = \valueof{\mdp}{s}
$.
\end{theorem}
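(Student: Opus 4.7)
Since $\cParity{\{0,1,2\}}$ is a tail objective, the natural route is via \cref{thm:Ornstein-plastering}(2): once we know that for every state $s$ and every $\eps>0$ there is an $\eps$-optimal MD strategy for $\cParity{\{0,1,2\}}$ from $s$, the plastering construction yields a single MD strategy that is optimal from every state admitting an optimal strategy. So the real task is the pointwise existence of $\eps$-optimal MD strategies in the universally transient setting.

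I would decompose the objective into its two disjoint winning modes: $\formula_B=\buchi{C_2}$ (visit color $2$ infinitely often) and $\formula_S=\eventually\safety{C_1\cup C_2}$ (eventually stay in color $0$ forever). Since $\formula_B\cap\formula_S=\emptyset$, for every $s$ and every strategy $\zstrat$ we have $\probm_{s,\zstrat}(\Parity{\coloring}) = \probm_{s,\zstrat}(\formula_B) + \probm_{s,\zstrat}(\formula_S)$. \Cref{thm:eps-optimal-safety} already supplies uniformly $\eps$-optimal MD strategies for $\safety{C_1\cup C_2}$, and its proof technique -- pick a successor maximising the value with a summable per-state error budget of the form $\eps/(2^{\iota(s)+1}\,R(s))$, where $R(s)$ is the uniform bound on the expected number of visits supplied by \cref{lem:structural-transience}(4) -- is the template I would re-use. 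Concretely, let $v^*$ be the value function of $\cParity{\{0,1,2\}}$ and let $\zstrat$ be the MD strategy that in each controlled state $s$ picks a successor $s'$ with $v^*(s')\ge v^*(s)-\eps/(2^{\iota(s)+1}\,R(s))$.

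The martingale analysis then runs parallel to the proof of \cref{thm:eps-optimal-safety}: the process $v^*(X_n)$ is a supermartingale under $\zstrat$ whose expected total gap is bounded by $\eps$, so $\lim_n v^*(X_n)$ exists almost surely and the event that it equals $1$ has probability at least $v^*(s_0)-\eps$. The genuinely new step is to show that on almost every run where $\lim_n v^*(X_n)=1$ the parity condition $\formula_B\cup\formula_S$ actually holds. Here I would invoke the structural characterisation of optimal $\cParity{\{0,1,2\}}$ strategies from \cite{KMST2020c} together with universal transience: since every state is a.s.\ visited only finitely often, a run that fails $\formula_S$ but whose value tends to $1$ must traverse an unbounded sequence of distinct high-value states, and I would argue that infinitely many of these must carry colour $2$, yielding $\formula_B$.

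\textbf{Main obstacle.} The crux is precisely this last step: showing that climbing the value function memorylessly forces one of the two winning modes rather than merely keeping the value process close to $1$. In general countable MDPs this fails -- it is exactly why $\cParity{\{1,2\}}$ requires a bit of memory -- so the argument must use universal transience decisively. I expect the cleanest route is to split the state space according to which sub-value ($V_B$ or $V_S$) dominates and argue, using \cref{lem:structural-transience} and the MD safety strategy of \cref{thm:eps-optimal-safety}, that the value-climbing rule routes almost every winning run into the matching "trap" region, where the corresponding mode is then won almost surely. Feeding the resulting pointwise $\eps$-optimal MD strategies into \cref{thm:Ornstein-plastering}(2) then closes the theorem.
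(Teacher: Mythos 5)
There is a genuine gap, and it is architectural. You propose to establish, for every state, $\eps$-optimal MD strategies for $\cParity{\{0,1,2\}}$ and then feed these into \cref{thm:Ornstein-plastering}(2). But the hypothesis of \cref{thm:Ornstein-plastering} --- pointwise existence of $\eps$-optimal MD strategies for the objective --- is provably false for $\cParity{\{0,1,2\}}$, even in universally transient MDPs: the paper recalls at the start of \cref{sec:parity}, citing \cite{KMST:ICALP2019}, that there are acyclic (hence universally transient) MDPs in which $\eps$-optimal strategies for $\cParity{\{1,2\}}$ require one bit of memory, and a $\{1,2\}$-coloring is a special case of a $\{0,1,2\}$-coloring. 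This is precisely why the theorem is stated only for optimal strategies where they exist, not for $\eps$-optimal ones. Relatedly, the analytic step you yourself flag as the crux --- that a run along which the value tends to $1$ under a value-climbing MD rule must satisfy the parity condition --- is exactly the step that fails for B\"uchi and is only hand-waved here: a high value at the visited states bounds what could still be achieved, not what is achieved, and a run may pass through infinitely many distinct colour-$1$ states of value arbitrarily close to $1$ without ever settling into colour $0$ or seeing colour $2$ infinitely often.

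The paper's proof sidesteps both problems by first passing to the conditioned MDP $\mdp_+$ (in the sense of \cite[Lemma~6]{KMSW2017}, \cite[Def.~19]{KMST2020c}), which retains only the states possessing optimal strategies and rescales transition probabilities so that every remaining state has value $1$; this reduces the question to MD strategies for \emph{almost-sure} winning, a genuinely easier problem than $\eps$-optimality. The only ingredients proved in this paper are \cref{lem:old-conditioned-MDP-preserves-structural-transience} (conditioning preserves universal transience) and \cref{thm:eps-optimal-safety} (uniformly $\eps$-optimal MD strategies for safety in universally transient, possibly infinitely branching, MDPs); the rest is delegated to \cite[Theorem~22]{KMST2020c}. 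To salvage your plan you would have to work inside $\mdp_+$ rather than in $\mdp$, and even there the claim that value-climbing forces one of the two winning modes would need an actual proof rather than an appeal to the structure of optimal strategies.
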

\begin{proof}
    Let $\mdp_+$ be the conditioned version of $\mdp$ w.r.t.~$\formula$ (see \cite[Def.~19]{KMST2020c}
    for a precise definition).
    By \cref{lem:old-conditioned-MDP-preserves-structural-transience}, $\mdp_+$ is still a universally transient MDP
    and therefore by \cref{thm:eps-optimal-safety}, there exist uniformly
    $\eps$-optimal MD strategies for
    every safety objective and every $\eps>0$.
    The claim now follows from \cite[Theorem~22]{KMST2020c}.
\end{proof}

\begin{theorem}\label{thm:coBuchi}
For every universally transient countable MDP $\mdp$,
co-B\"uchi objective and $\eps>0$
there exists a uniformly $\eps$-optimal MD strategy.

Formally, let $\mdp$ be a universally transient countable MDP with states $\states$,
$\coloring:\states\to \{0,1\}$ be a coloring, $\formula=\Parity{\coloring}$ and $\eps>0$.

There exists an MD strategy $\zstrat'$ s.t.~for every state $\state$,
$
\probm_{\mdp,\state,\zstrat'}(\formula) \ge \valueof{\mdp}{s} - \eps
$.
\end{theorem}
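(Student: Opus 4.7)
The plan is to combine two pieces of machinery developed earlier in the paper: plastering (\cref{thm:Ornstein-plastering}) to lift a non-uniform $\eps$-optimal MD strategy into a uniform one, and the conditioned MDP construction (\cref{sec:conditioned}) to reduce the $\eps$-optimal co-B\"uchi problem to one where the value of $\formula$ is identically~$1$, which can then be attacked via \cref{thm:eps-optimal-safety}.

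Since $\formula=\Parity{\coloring}$ with $\coloring:\states\to\{0,1\}$ is a tail objective, \cref{thm:Ornstein-plastering} reduces the claim to the following non-uniform statement: for every initial state $\state_0$ there is an MD strategy $\zstrat_{\state_0}$ with $\probm_{\mdp,\state_0,\zstrat_{\state_0}}(\formula)\ge v(\state_0)-\eps$, where $v(\state)\eqdef\valueof{\mdp,\formula}{\state}$. Fix such an $\state_0$ and form the conditioned MDP $\mdp_+$ of $\mdp$ with respect to $\formula$. By \cref{lem:old-conditioned-MDP-preserves-structural-transience} (invoked already in the proof of \cref{thm:012quant}), $\mdp_+$ is again universally transient. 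MD strategies of $\mdp_+$ correspond naturally to MD strategies of $\mdp$, the co-B\"uchi value in $\mdp_+$ is identically~$1$ at every state reachable from $\state_0$, and, as emphasised in the introduction, the conditioning converts multiplicative-error bounds on $\mdp_+$ into additive-error bounds on $\mdp$. Thus it suffices to produce an MD strategy on $\mdp_+$ that attains $\formula$ from $\state_0$ with probability at least $1-\delta$, for a suitably small $\delta$ depending on $\eps$ and $v(\state_0)$.

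Set $C_1\eqdef\colorset{\states}{=1}{}$, so that $\formula=\eventually\,\safety{C_1}$. Apply \cref{thm:eps-optimal-safety} inside $\mdp_+$ to the safety objective $\safety{C_1}$ to obtain a uniformly $\delta'$-optimal MD strategy $\zstrat^{*}$, with $\delta'$ to be fixed at the end. Let $u(\state)\eqdef\valueof{\mdp_+,\safety{C_1}}{\state}$ and let $(\state_n)_{n\in\N}$ be the sequence of states visited when $\zstrat^{*}$ is played from $\state_0$ in $\mdp_+$. The Bellman inequality for safety, combined with $\delta'$-optimality of $\zstrat^{*}$, shows that $(u(\state_n))_n$ is a bounded near-submartingale (its conditional drift is non-negative up to an $O(\delta')$-correction summable over time), so by Doob it converges almost surely to some limit $Y_\infty\in[0,1]$. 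A Levy 0-1 type argument identifies $Y_\infty=\mathbf{1}_{\safety{C_1}}$ almost surely, so
\[
\probm_{\mdp_+,\state_0,\zstrat^{*}}(\formula)=\probm_{\mdp_+,\state_0,\zstrat^{*}}(\eventually\,\safety{C_1})\ge \mathbb{E}[Y_\infty].
\]
Because the co-B\"uchi value on $\mdp_+$ is identically~$1$ and $\zstrat^{*}$ is $\delta'$-optimal for safety, $\mathbb{E}[Y_\infty]$ must be at least $1-O(\delta')$. Choosing $\delta'$ small and translating back through the conditioning yields the non-uniform MD strategy; \cref{thm:Ornstein-plastering} then gives the uniform version.

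The main obstacle will be the near-submartingale step: turning additive $\delta'$-optimality of $\zstrat^{*}$ for safety into additive $O(\delta')$-optimality for co-B\"uchi inside $\mdp_+$. The subtlety is that $C_1$ may be infinite, so universal transience of $\mdp_+$ does not by itself bound the number of visits to $C_1$; the argument must rely on the summably small drift of $u(\state_n)$ (controlled by $\delta'$ via the Bellman equation) together with the fact that in $\mdp_+$ the co-B\"uchi value is identically~$1$, which is what forces $Y_\infty$ to concentrate near~$1$ in expectation.
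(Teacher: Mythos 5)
Your overall scaffolding (plastering for uniformity, conditioning to normalize the value to~$1$) matches tools the paper uses elsewhere, but the paper's own proof of this theorem is simply a citation: it combines \cref{thm:eps-optimal-safety} with an external result ([KMST2020c, Theorem~25]) that converts uniformly $\eps$-optimal MD strategies for safety into uniformly $\eps$-optimal MD strategies for co-B\"uchi. Your attempt to replace that black box with a direct argument contains a genuine gap at its central step.

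The flawed step is the claim that a uniformly $\delta'$-optimal MD strategy $\zstrat^{*}$ for $\safety{C_1}$ is automatically near-optimal for $\formula=\eventually\safety{C_1}$. Your martingale argument, done carefully, yields $\probm_{\state_0,\zstrat^{*}}(\eventually\safety{C_1}) \ge \mathbb{E}[Y_\infty] \ge u(\state_0)-O(\delta')$, where $u(\state_0)$ is the \emph{safety} value of $\state_0$ --- not the co-B\"uchi value. These can be far apart: $u(\state_0)=0$ whenever $\state_0\in C_1$, while the co-B\"uchi value can still be~$1$. Your assertion that ``$\mathbb{E}[Y_\infty]$ must be at least $1-O(\delta')$ because the co-B\"uchi value is identically~$1$'' has no supporting argument, and it is false. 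Concretely, take the acyclic (hence universally transient) MDP with controlled states $c_1,c_2,\dots$ of color~$1$ and disjoint rays $g_i^0\to g_i^1\to\cdots$ of color~$0$, with transitions $c_i\to c_{i+1}$ and $c_i\to g_i^0$. The MD strategy choosing $c_i\to c_{i+1}$ everywhere is uniformly \emph{$0$-optimal} for $\safety{C_1}$ (the safety value of every $c_i$ is already~$0$, and it is optimal on the $g$-rays), yet it attains $\formula$ with probability~$0$ from $c_1$, whose co-B\"uchi value is~$1$. Conditioning does not help: here the co-B\"uchi value is already~$1$ everywhere. The missing idea is that an $\eps$-optimal strategy for co-B\"uchi must first steer toward states of high \emph{safety} value and only then play safe; a safety-(near-)optimal strategy has no incentive to do the first phase at states where the safety value is low, and making the combined two-phase strategy memoryless is precisely the nontrivial content of the cited [KMST2020c, Theorem~25]. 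A secondary, fixable issue: you invoke \cref{lem:old-conditioned-MDP-preserves-structural-transience} for the conditioned MDP of \cite{KMSW2017}, which contains only states admitting \emph{optimal} strategies; since optimal strategies for co-B\"uchi need not exist, you would need $\pmdp$ from \cref{def:conditionedmdp} together with \cref{lem:conditioned-MDP-preserves-structural-transience} instead.
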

\begin{proof}
    This directly follows from \cref{thm:eps-optimal-safety} and \cite[Theorem~25]{KMST2020c}.
\end{proof}

\section{The Conditioned MDP}\label{sec:conditioned}
Given an MDP~$\mdp$ and an objective~$\formula$ that is tail in~$\mdp$, a construction of a \emph{conditioned} MDP~$\mdp_+$ was provided in \cite[Lemma~6]{KMSW2017} that, very loosely speaking, ``scales up'' the probability of~$\formula$ so that any strategy $\zstrat$ is optimal in~$\mdp$ if it is almost surely winning in~$\mdp_+$.
For certain tail objectives, this construction was used in~\cite{KMSW2017} to reduce the sufficiency of MD strategies for \emph{optimal} strategies to the sufficiency of MD strategies for \emph{almost surely winning} strategies, which is a special case that may be easier to handle.

However, the construction was restricted to states that \emph{have} an optimal strategy.
In fact, states in~$\mdp$ that do not have an optimal strategy do not appear in~$\mdp_+$.
In the following, we lift this restriction by constructing a more general version of the conditioned MDP, called~$\pmdp$.
The MDP~$\pmdp$ will contain all states from~$\mdp$ that have a positive value w.r.t.~$\formula$ in~$\mdp$.
Moreover, all these states will have value~$1$ in~$\pmdp$.
It will then follow from \cref{lem:conditioned-construction}(3) below that an $\eps$-optimal strategy in~$\pmdp$ is $\varepsilon \valueof{\mdp}{s_0}$-optimal in~$\mdp$.
This allows us to reduce the sufficiency of MD strategies for $\eps$-optimal strategies to the sufficiency of MD strategies for $\eps$-optimal strategies for states with value~$1$.
In fact, it also follows that if an MD strategy~$\zstrat$ is uniform $\eps$-optimal in~$\pmdp$, it is \emph{multiplicatively} uniform $\eps$-optimal in~$\mdp$, i.e., $\probm_{\mdp,s,\zstrat}(\formula) \ge (1-\eps) \cdot \valueof{\mdp}{s}$ holds for all states~$s$.

\begin{definition}
\label{def:conditionedmdp}
For an MDP $\mdp=\mdptuple$ and an objective~$\formula$ that is tail in~$\mdp$,
define the \emph{conditioned version} of~$\mdp$ w.r.t.~$\formula$
to be the MDP $\pmdp = \tuple{\pstates,\pzstates,\prstates,\ptransition,\pprobp}$ with
\begin{align*}
\pzstates \ = \ &\{s \in \zstates \mid \valueof{\mdp}{s} > 0 \} \\
\prstates \ = \ &\{s \in \rstates \mid \valueof{\mdp}{s} > 0 \} \cup \{s_\bot\} \cup \{(s,t) \in \mathord{\transition} \mid s \in \zstates,\ \valueof{\mdp}{s} > 0\} \\
\ptransition \ = \ &\{(s,(s,t)) \in (\zstates \times \mathord\transition) \mid \valueof{\mdp}{s}>0,\ s \transition t\}  \cup \mbox{} \\
                   &\{(s,t) \in \rstates \times \states \mid \valueof{\mdp}{s}>0,\ \valueof{\mdp}{t}>0\} \cup \mbox{} \\
                   &\{((s,t),t) \in (\mathord\transition \times S) \mid \valueof{\mdp}{s}>0,\ \valueof{\mdp}{t}>0\} \cup \mbox{} \\
                   &\{((s,t),s_\bot) \in (\mathord\transition \times \{s_\bot\}) \mid \valueof{\mdp}{s}>\valueof{\mdp}{t}\} \cup \mbox{} \\
                   &\{(s_\bot,s_\bot)\} \\
\pprobp(s,t) \ = \ &\probp(s,t) \cdot \frac{\valueof{\mdp}{t}}{\valueof{\mdp}{s}} \hspace{20mm} 
\pprobp((s,t),t) \ = \ \frac{\valueof{\mdp}{t}}{\valueof{\mdp}{s}} \\
\pprobp((s,t),s_\bot) \ = \ & 1 - \frac{\valueof{\mdp}{t}}{\valueof{\mdp}{s}} \hspace{27mm} 
\pprobp(s_\bot,s_\bot) \ = \ 1
\end{align*}
for a fresh state $s_\bot$. 
\end{definition}
The conditioned MDP is well-defined.
Indeed, as $\formula$ is tail in~$\mdp$, for any $s \in \rstates$ we have $\valueof{\mdp}{s} = \sum_{s \transition t} \probp(s,t) \valueof{\mdp}{t}$, and so if $\valueof{\mdp}{s} > 0$ then $\sum_{s \transition t} \pprobp(s,t) = 1$.

\begin{restatable}{lemma}{lemconditionedconstruction}\label{lem:conditioned-construction}
Let $\mdp=\mdptuple$ be an MDP, and let $\formula$ be an objective that is tail in~$\mdp$.
Let $\pmdp = \tuple{\pstates,\pzstates,\prstates,\ptransition,\pprobp}$ be the conditioned version of~$\mdp$ w.r.t.~$\formula$.
Let $s_0 \in \pstates \cap \states$.
Let $\zstrat \in \zstratset_{\pmdp}$, and note that $\zstrat$ can be transformed to a strategy in~$\mdp$ in a natural way.
Then:
\begin{enumerate}
\item
For all $n \ge 0$ and all partial runs $s_0 s_1 \cdots s_n \in s_0 \pstates^*$ in~$\pmdp$ with $s_n \in \states$:
\[
\valueof{\mdp}{s_0} \cdot 
\probm_{\pmdp,s_0,\zstrat}(s_0 s_1 \cdots s_n \pstates^\omega) \ = \ 
\probm_{\mdp,s_0,\zstrat}(\overline{s_0 s_1 \cdots s_n} \states^\omega) \cdot \valueof{\mdp}{s_n}\,,
\]
where $\overline{w}$ for a partial run~$w$ in~$\pmdp$ refers to its natural contraction to a partial run in~$\mdp$; i.e., $\overline{w}$ is obtained from~$w$ by deleting all states of the form~$(s,t)$.
\item 
For all measurable $\playset \subseteq s_0 (\pstates \setminus \{s_\bot\})^\omega$ we have
\[
\probm_{\mdp,s_0,\zstrat}(\overline{\playset})
\ \ge \
\valueof{\mdp}{s_0} \cdot \probm_{\pmdp,s_0,\zstrat}(\playset) 
\ \ge \ 
\probm_{\mdp,s_0,\zstrat}(\overline{\playset} \cap \denotationof{\formula}{s_0})\,,
\]
where $\overline{\playset}$ is obtained from~$\playset$ by deleting, in all runs, all states of the form~$(s,t)$.
\item
We have
$\valueof{\mdp}{s_0} \cdot \probm_{\pmdp,s_0,\zstrat}(\formula) = \probm_{\mdp,s_0,\zstrat}(\formula)$.
In particular, $\valueof{\pmdp}{s_0} = 1$, and, for any $\varepsilon \ge 0$, strategy~$\sigma$ is $\varepsilon$-optimal in~$\pmdp$ if and only if it is $\varepsilon \valueof{\mdp}{s_0}$-optimal in~$\mdp$.
\end{enumerate}
\end{restatable}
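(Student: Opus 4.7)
The plan is to prove the three parts in sequence: Part~1 via an induction on run length, Part~2 by extending Part~1 from cylinders to arbitrary measurable sets through a Caratheodory argument, and Part~3 by specializing Part~2 to $\playset = \formula$ restricted to the $s_\bot$-avoiding part of $\pmdp$.

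For Part~1, I would induct on $m$, the number of transitions in the contracted $\mdp$-path $\overline{s_0 \cdots s_n}$. The base case $m = 0$ forces $n = 0$ (intermediate $(s,t)$-states in $\pmdp$ only transition to $t$ or $s_\bot$), and both sides equal $\valueof{\mdp}{s_0}$. For the inductive step, let $s_k$ be the penultimate $\states$-state in the partial run; then either $k = n-1$ with $s_k \in \rstates$ (a direct random $\pmdp$-step) or $k = n-2$ with $s_k \in \zstates$ and $s_{n-1} = (s_k, s_n)$ intermediate. In both cases the combined $\pmdp$-probability of going from $s_k$ to $s_n$ equals $p \cdot \valueof{\mdp}{s_n}/\valueof{\mdp}{s_k}$, where $p$ is the underlying $\mdp$-transition probability ($\probp(s_k,s_n)$ in the random case, or the strategy-assigned probability at $s_k$ in the controlled case). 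Multiplying the induction hypothesis for $s_0 \cdots s_k$ by this factor, the $\valueof{\mdp}{s_k}$ cancels and the equation for $s_0 \cdots s_n$ drops out.

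For Part~2, observe first that $\overline{\cdot}$ is a bijection between the $\pmdp$-runs from $s_0$ avoiding $s_\bot$ and those $\mdp$-runs from $s_0$ that stay in states of positive $\formula$-value. Define two finite measures on the measurable subsets of $s_0(\pstates\setminus\{s_\bot\})^\omega$ by $\mu(\playset) \eqdef \valueof{\mdp}{s_0} \cdot \probm_{\pmdp,s_0,\zstrat}(\playset)$ and $\nu(\playset) \eqdef \probm_{\mdp,s_0,\zstrat}(\overline{\playset})$. On each cylinder $C = s_0 \cdots s_n \pstates^\omega$ with $s_n \in \states$, Part~1 gives $\mu(C) = \nu(C) \cdot \valueof{\mdp}{s_n} \le \nu(C)$; cylinders form a generating $\pi$-system, and the non-negative set function $\nu - \mu$ extends uniquely to a measure on the full $\sigma$-algebra by Caratheodory, so $\mu \le \nu$ throughout, yielding the first inequality. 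For the second inequality, tailness of $\formula$ implies that conditional on the history $\overline{s_0 \cdots s_n}$ the probability of $\formula$ is at most $\valueof{\mdp}{s_n}$, so $\probm_{\mdp,s_0,\zstrat}(\overline{C} \cap \denotationof{\formula}{s_0}) \le \nu(C) \cdot \valueof{\mdp}{s_n} = \mu(C)$; the same extension argument lifts this to all measurable $\playset$.

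For Part~3, apply Part~2 with $\playset \eqdef \formula \cap s_0(\pstates\setminus\{s_\bot\})^\omega$. Since $s_\bot$ is an absorbing losing sink in $\pmdp$, $\probm_{\pmdp,s_0,\zstrat}(\playset) = \probm_{\pmdp,s_0,\zstrat}(\formula)$; and tailness of $\formula$ forces the probability of $\formula$ to vanish from any zero-value state, so $\overline{\playset}$ and $\denotationof{\formula}{\mdp,s_0}$ agree $\probm_{\mdp,s_0,\zstrat}$-almost-surely and in particular $\overline{\playset} \cap \denotationof{\formula}{s_0} = \overline{\playset}$. Both inequalities of Part~2 then collapse to the identity $\valueof{\mdp}{s_0} \cdot \probm_{\pmdp,s_0,\zstrat}(\formula) = \probm_{\mdp,s_0,\zstrat}(\formula)$. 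Taking the supremum over $\zstrat$ via the natural correspondence between $\pmdp$- and $\mdp$-strategies yields $\valueof{\pmdp}{s_0} = 1$, and substituting into the identity turns $\eps$-optimality in $\pmdp$ into $\eps\valueof{\mdp}{s_0}$-optimality in $\mdp$ and vice versa. The main obstacle is the bookkeeping around $\overline{\cdot}$ in Part~2: keeping the bijection between $\pmdp$-runs avoiding $s_\bot$ and $\mdp$-runs staying in positive-value states clean enough for the Caratheodory extension to go through, and invoking tailness both to bound conditional probabilities in the second inequality and to identify $\overline{\playset}$ with $\denotationof{\formula}{\mdp,s_0}$ up to a null-set in Part~3.
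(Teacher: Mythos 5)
Your proof follows essentially the same route as the paper's: the same induction with the same value-cancellation for item~1 (the paper inducts on $n$ with a case split on $s_n \in \zstates$ versus $s_n \in \rstates$, which matches your penultimate-state case split), an extension from cylinders to all measurable sets for item~2, and specialization to $\playset$ corresponding to $\denotationof{\formula}{s_0}$ for item~3. The one place where your justification is thinner than the paper's is the extension step: the paper routes it through an auxiliary MDP $\pmdp'$ (so that $\probm_{\pmdp',s_0,\zstrat}(\playset)=\probm_{\mdp,s_0,\zstrat}(\overline{\playset})$ holds exactly) and the monotone class theorem, and your remark that cylinders form a generating $\pi$-system is not by itself sufficient --- the class of sets on which $\mu \le \nu$ holds is not a $\lambda$-system --- so your Carath\'eodory argument must additionally use that the algebra generated by cylinders consists of countable disjoint unions of cylinders, which is exactly the fact the paper's \cref{lem:measure-theory} relies on.
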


\Cref{lem:conditioned-construction}.3 provides a way of proving the existence of MD strategies that attain, for each state~$s$, a fixed \emph{fraction} (arbitrarily close to~$1$) of the value of~$s$:

\begin{theorem} \label{thm:multiplicative-eps-optimal}
Let $\mdp=\mdptuple$ be an MDP, and let $\formula$ be an objective that is tail in~$\mdp$.
Let $\pmdp = \tuple{\pstates,\pzstates,\prstates,\ptransition,\pprobp}$ be the conditioned version of~$\mdp$ w.r.t.~$\formula$.
Let $\eps \ge 0$.
Any MD strategy~$\zstrat$ that is uniformly $\eps$-optimal in~$\pmdp$ (i.e., $\probm_{\pmdp,s,\zstrat}(\formula) \ge \valueof{\pmdp}{s} - \eps$ holds for all $s \in \pstates$) is \emph{multiplicatively $\eps$-optimal} in~$\mdp$ (i.e.,  $\probm_{\mdp,s,\zstrat}(\formula) \ge (1-\eps) \valueof{\mdp}{s}$ holds for all $s \in S$).
\end{theorem}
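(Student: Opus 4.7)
The plan is to derive the result as a fairly direct consequence of \cref{lem:conditioned-construction}(3), splitting the state space of~$\mdp$ into two parts depending on their value w.r.t.~$\formula$. The key observation is that, by construction of~$\pmdp$, a state $s \in S$ appears in $\pstates$ if and only if $\valueof{\mdp}{s} > 0$, so the complement is trivial for the multiplicative bound.

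First, I would address the easy case $s \in S \setminus \pstates$, for which $\valueof{\mdp}{s} = 0$. Then the inequality $\probm_{\mdp,s,\zstrat}(\formula) \ge (1-\eps)\valueof{\mdp}{s} = 0$ holds trivially, so nothing needs to be proved. (Here one also notes that an MD strategy~$\zstrat$ on~$\pmdp$ extends naturally to an MD strategy on~$\mdp$: at controlled states $s \in \pzstates$ one plays the transition $s \transition t$ encoded by $\zstrat(s) = (s,t)$, and at the remaining controlled states, which all have value~$0$, one plays arbitrarily. This extension is implicit in the statement of \cref{lem:conditioned-construction}.)

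For the main case $s \in \pstates \cap \states$, I would invoke \cref{lem:conditioned-construction}(3) to obtain the identity
\[
\probm_{\mdp,s,\zstrat}(\formula) \;=\; \valueof{\mdp}{s} \cdot \probm_{\pmdp,s,\zstrat}(\formula).
\]
Since the same lemma guarantees $\valueof{\pmdp}{s} = 1$ and $\zstrat$ is by assumption uniformly $\eps$-optimal in~$\pmdp$, we have $\probm_{\pmdp,s,\zstrat}(\formula) \ge 1 - \eps$. Substituting yields
\[
\probm_{\mdp,s,\zstrat}(\formula) \;\ge\; \valueof{\mdp}{s} \cdot (1-\eps),
\]
which is precisely the desired multiplicative $\eps$-optimality bound at~$s$.

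Since this covers all states, the theorem follows. There is essentially no hard step here once \cref{lem:conditioned-construction} is in hand; the only mild subtlety is being explicit that the MD strategy on~$\pmdp$ only needs to be defined on $\pzstates$ and that the extension to controlled states of~$\mdp$ outside $\pstates$ is irrelevant because those states are value~$0$ in~$\mdp$ and therefore both sides of the target inequality are~$0$ there.
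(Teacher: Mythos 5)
Your proposal is correct and matches the paper's approach exactly: the paper's proof is simply ``Immediate from \cref{lem:conditioned-construction}.3,'' and your argument spells out precisely that deduction, including the (correct and trivially handled) case of states outside $\pstates$ with value~$0$.
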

\begin{proof}
Immediate from \cref{lem:conditioned-construction}.3.
\end{proof}

As an application of \cref{thm:multiplicative-eps-optimal}, we can strengthen the first statement of \cref{thm:TransientMD} towards \emph{multiplicatively} (see \cref{thm:multiplicative-eps-optimal}) uniform $\eps$-optimal MD strategies for $\transient$.
\begin{corollary} \label{cor:multiplicative-eps-optimal-transience}
In every countable MDP there exist multiplicatively uniform $\eps$-optimal MD strategies for $\transient$.
\end{corollary}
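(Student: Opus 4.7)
The plan is to apply \cref{thm:multiplicative-eps-optimal} to the $\transient$ objective, using the fact (already noted in \cref{sec:transientPre}) that $\transient$ is tail. Given an MDP $\mdp$, let $\pmdp$ be its conditioned version with respect to $\transient$, as constructed in \cref{def:conditionedmdp}. By \cref{lem:conditioned-construction}.3, every state $s\in\pstates$ has value $1$ in $\pmdp$ for $\transient$.

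Next, I would apply \cref{thm:TransientMD} to the MDP $\pmdp$ to obtain a uniformly $\eps$-optimal MD strategy $\zstrat$ in $\pmdp$ for $\transient$, i.e.,
\[
\forall s\in\pstates\colon\quad \probm_{\pmdp,s,\zstrat}(\transient)\ \ge\ \valueof{\pmdp}{s}-\eps\ =\ 1-\eps.
\]
Under the natural identification of MD strategies in $\pmdp$ with MD strategies in $\mdp$ discussed before \cref{lem:conditioned-construction} (a choice of successor $(s,t)$ at a controlled state $s$ of $\pmdp$ is the choice of successor $t$ at the corresponding controlled state $s$ of $\mdp$), $\zstrat$ induces an MD strategy in $\mdp$ on the subset of controlled states $s$ with $\valueof{\mdp}{s}>0$. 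On the remaining controlled states (those with value $0$ in $\mdp$), I extend $\zstrat$ to an MD strategy arbitrarily; for such states the multiplicative inequality $\probm_{\mdp,s,\zstrat}(\transient)\ge (1-\eps)\cdot 0$ holds trivially.

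Finally, \cref{thm:multiplicative-eps-optimal} applied to this $\zstrat$ yields that it is multiplicatively uniformly $\eps$-optimal in $\mdp$:
\[
\forall s\in\states\colon\quad \probm_{\mdp,s,\zstrat}(\transient)\ \ge\ (1-\eps)\cdot\valueof{\mdp}{s}.
\]
The only subtlety, which I expect to be essentially bookkeeping rather than a genuine obstacle, is verifying the identification of MD strategies between $\pmdp$ and $\mdp$ and the handling of value-$0$ states; everything else reduces to invoking the existing results \cref{thm:TransientMD} and \cref{thm:multiplicative-eps-optimal}.
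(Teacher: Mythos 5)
Your proposal is correct and follows essentially the same route as the paper: condition $\mdp$ on $\transient$, invoke \cref{thm:TransientMD} on $\pmdp$ to get a uniformly $\eps$-optimal MD strategy there, and then apply \cref{thm:multiplicative-eps-optimal}. The extra bookkeeping you flag (identifying MD strategies across $\pmdp$ and $\mdp$ and handling value-$0$ states) is implicit in the paper's two-line proof and is indeed unproblematic.
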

\begin{proof}
Let $\mdp$ be a countable MDP, and $\pmdp$ its conditioned version w.r.t. $\transient$.
Let $\eps > 0$.
By \cref{thm:TransientMD}, there is a uniform $\eps$-optimal MD strategy~$\zstrat$ for $\transient$ in~$\pmdp$.
By \cref{thm:multiplicative-eps-optimal}, strategy~$\zstrat$ is multiplicatively uniform $\eps$-optimal in~$\mdp$.
\end{proof}

The following lemma, stating that universal transience is closed under ``conditioning'', is needed for the proof of \cref{lem:old-conditioned-MDP-preserves-structural-transience} below.

\begin{restatable}{lemma}{lemconditionedMDPpreservesstructuraltransience}\label{lem:conditioned-MDP-preserves-structural-transience}
Let $\mdp=\mdptuple$ be an MDP, and let $\formula$ be an objective that is tail in~$\mdp$.
Let $\pmdp = \tuple{\pstates,\pzstates,\prstates,\ptransition,\pprobp}$ be the conditioned version of~$\mdp$ w.r.t.~$\formula$, where $s_\bot$ is replaced by an infinite chain $s_\bot^1 \transition s_\bot^2 \transition \cdots$.
If $\mdp$ is universally transient, then so is~$\pmdp$.
\end{restatable}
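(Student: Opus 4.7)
The plan is to show that for every strategy $\zstrat \in \zstratset_{\pmdp}$ and every initial state $s_0 \in \pstates$, every state is visited only finitely often almost surely. Since $\pstates$ is countable, it suffices to prove, for each $q \in \pstates$, that $\probm_{\pmdp,s_0,\zstrat}(\always\eventually q) = 0$.

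The state space $\pstates$ partitions into three kinds of states, each of which I handle differently: chain states $s_\bot^i$, edge states $(s,t)$ introduced by the conditioned construction, and original states $s \in \states \cap \pstates$. Chain states are handled trivially: since the chain $s_\bot^1 \transition s_\bot^2 \transition \cdots$ is acyclic and absorbing, each $s_\bot^i$ is visited at most once on any run. Edge states $(s,t)$ have $s$ as their unique predecessor (by inspection of $\ptransition$), so any run visiting $(s,t)$ infinitely often also visits $s$ infinitely often. Everything thus reduces to showing that every original state $s \in \states \cap \pstates$ is visited infinitely often with probability zero.

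Fix such an $s$ and let $\playset$ be the event of runs from $s_0$ that visit $s$ infinitely often. Any run in $\playset$ cannot enter the chain (once in the chain it stays there forever and never visits $s$), so $\playset \subseteq s_0(\pstates \setminus \{s_\bot^i : i \ge 1\})^\omega$. Suppose first that $s_0 \in \states$; then $\valueof{\mdp}{s_0} > 0$ by definition of $\pstates$, and \cref{lem:conditioned-construction}(2) yields
\[
\probm_{\mdp,s_0,\zstrat}(\overline{\playset}) \;\ge\; \valueof{\mdp}{s_0} \cdot \probm_{\pmdp,s_0,\zstrat}(\playset).
\]
Since the contraction $\overline{\cdot}$ only deletes edge states of the form $(s',t')$, the event $\overline{\playset}$ consists of runs in $\mdp$ that visit $s$ infinitely often. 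By universal transience of $\mdp$, its probability is zero, forcing $\probm_{\pmdp,s_0,\zstrat}(\playset) = 0$.

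For initial states outside $\states$, a short Markov-property argument reduces to the previous case: from a chain state, $s$ is never visited; from an edge state $(s_1,t_1)$, one step either absorbs the run into the chain or lands at $t_1 \in \states$, where the argument above applies to the strategy shifted by this prefix. Summing the resulting null events over all $q \in \pstates$ gives $\probm_{\pmdp,s_0,\zstrat}(\transient) = 1$, as required. The main obstacle is verifying that the contraction $\overline{\cdot}$ of \cref{lem:conditioned-construction}(2) faithfully preserves the infinite-visit event on original states, which is immediate because only edge states (all distinct from $s$) are deleted by the contraction.
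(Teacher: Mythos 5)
Your proof is correct, but it takes a genuinely different route from the paper's. The paper argues by contraposition: assuming $\pmdp$ is not universally transient, it invokes \cref{lem:structural-transience}(3) to obtain a state $s_0$ whose return-value in $\pmdp$ equals $1$, then for each $C>0$ builds a strategy that returns to $s_0$ at least $2C/\valueof{\mdp,\formula}{s_0}$ times with probability $\ge \tfrac12$, and pulls this back through the first inequality of \cref{lem:conditioned-construction}(2) to produce a strategy in $\mdp$ whose expected number of returns to $s_0$ exceeds $C$, contradicting \cref{lem:structural-transience}(4). You instead work directly with characterization (2) of \cref{lem:structural-transience}: for each state $q$ of $\pmdp$ you show the event $\always\eventually q$ is null, disposing of chain and edge states by elementary graph reasoning (chain states are visited at most once; an edge state $(s,t)$ has $s$ as its unique predecessor) and pulling the infinite-revisit event for an original state $s$ back through the very same inequality of \cref{lem:conditioned-construction}(2), where its contraction lands inside the $\mdp$-null set $\always\eventually s$. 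Both arguments hinge on the same inequality; yours avoids the quantitative detour through return probabilities and expected visit counts, at the cost of treating the auxiliary states ($s_\bot^i$ and $(s,t)$) explicitly both as initial and as target states --- which you do, and which the paper's proof leaves implicit when it asserts that the witness $s_0$ can be taken in $\pstates\cap\states$. The one point worth flagging is measurability of the contracted event $\overline{\playset}$; since it is contained in the $\probm_{\mdp,s_0,\zstrat}$-null set of runs visiting $s$ infinitely often, this is harmless (and the paper's own application of \cref{lem:conditioned-construction}(2) glosses over the same point).
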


In~\cite[Lemma~6]{KMSW2017} a variant, say~$\mdp_+$, of the conditioned MDP~$\pmdp$ from \cref{def:conditionedmdp} was proposed.
This variant~$\mdp_+$ differs from~$\pmdp$ in that $\mdp_+$ has only those states~$s$ from~$\mdp$ that have an optimal strategy, i.e., a strategy~$\zstrat$ with $\probm_{\mdp,s,\zstrat}(\formula) = \valueof{\mdp}{s}$.
Further, for any transition $s\transition t$ in~$\mdp_+$ where $s$ is a controlled state, we have $\valueof{\mdp}{s} = \valueof{\mdp}{t}$, i.e., $\mdp_+$ does not have value-decreasing transitions emanating from controlled states.
The following lemma was used in the proof of \cref{thm:012quant}:
\begin{restatable}{lemma}{lemoldconditionedMDPpreservesstructuraltransience} \label{lem:old-conditioned-MDP-preserves-structural-transience}
Let $\mdp$ be an MDP, and let $\formula$ be an objective that is tail in~$\mdp$.
Let $\mdp_+$ be the conditioned version w.r.t.~$\formula$ in the sense of~\cite[Lemma~6]{KMSW2017}.
If $\mdp$ is universally transient, then so is~$\mdp_+$.
\end{restatable}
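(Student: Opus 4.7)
My plan is to reduce the claim to \cref{lem:conditioned-MDP-preserves-structural-transience}, the analogous statement for the conditioned MDP~$\pmdp$ of \cref{def:conditionedmdp}. Since both $\pmdp$ and $\mdp_+$ use the same reweighting of random transition probabilities (namely $\probp(s,t) \cdot \valueof{\mdp}{t}/\valueof{\mdp}{s}$), the only substantive difference is that $\mdp_+$ further restricts attention to those states of~$\mdp$ admitting an optimal strategy and to those controlled transitions $s \transition t$ that satisfy $\valueof{\mdp}{s} = \valueof{\mdp}{t}$. Hence $\mdp_+$ behaves like a sub-process of~$\pmdp$ in which the ``loss channel'' to~$s_\bot$ is never triggered.

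First, I would lift any strategy $\sigma_+$ in $\mdp_+$ together with any initial state $s_0$ of $\mdp_+$ to a strategy $\sigma'$ in $\pmdp$ as follows: at every controlled state~$s$, whenever $\sigma_+$ chooses successor~$t$, let $\sigma'$ choose the intermediate random state $(s,t)$ in~$\pmdp$. Because $\mdp_+$ only contains value-preserving controlled edges, we have $\pprobp((s,t), s_\bot) = 1 - \valueof{\mdp}{t}/\valueof{\mdp}{s} = 0$, so $(s,t)$ deterministically transitions to~$t$. Combined with the agreement of random transition probabilities, this shows that the distribution of runs of $\pmdp$ under $(\sigma', s_0)$, after contracting out the intermediate $(s,t)$ vertices, coincides with the distribution of runs of $\mdp_+$ under $(\sigma_+, s_0)$.

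Assuming $\mdp$ is universally transient, \cref{lem:conditioned-MDP-preserves-structural-transience} yields universal transience of~$\pmdp$ (with $s_\bot$ expanded into the acyclic chain required by that lemma, which is irrelevant here since it is never visited almost surely under~$\sigma'$). Thus almost every run of $\pmdp$ under $(\sigma', s_0)$ visits each state only finitely often. Through the projection, the same holds for runs of $\mdp_+$ under $(\sigma_+, s_0)$: each state of~$\mdp_+$ is visited finitely often almost surely. Since $\sigma_+$ and $s_0$ were arbitrary, $\mdp_+$ is universally transient.

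The only real obstacle I foresee is bookkeeping: a single controlled step in~$\mdp_+$ corresponds to two steps in~$\pmdp$ through the intermediate edge state, so one has to state the correspondence between runs using the obvious contraction and verify that finite vs.\ infinite visit counts are preserved. Once this is pinned down, no further argument beyond \cref{lem:conditioned-MDP-preserves-structural-transience} is needed; the proof is a straightforward transfer of transience along the simulation.
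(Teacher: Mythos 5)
Your proposal is correct and follows essentially the same route as the paper: lift an arbitrary strategy of $\mdp_+$ to $\pmdp$ via the intermediate states $(s,t)$ (which transition to $t$ with probability $1$ because $\mdp_+$ has only value-preserving controlled edges), invoke \cref{lem:conditioned-MDP-preserves-structural-transience} for $\pmdp$, and transfer transience back through the contraction. The only cosmetic difference is that the paper routes the transfer through the return-probability characterization of \cref{lem:structural-transience}(3), whereas you transfer the almost-sure finiteness of visits directly; both are valid.
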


\section{Conclusion}\label{sec:conclusion}
The $\transient$ objective admits $\eps$-optimal (resp.\ optimal)
MD strategies even in \emph{infinitely} branching MDPs.
This is unusual, since $\eps$-optimal strategies for most other objectives
require infinite memory if the MDP is infinitely branching
(in particular all objectives generalizing Safety \cite{KMSW2017}).

$\transient$ encodes a notion of continuous progress, which can be used as
a tool to reason about the strategy complexity of other objectives in
countable MDPs.
E.g., our result on $\transient$ is used 
in \cite{MM:CONCUR2021} as a building block to show upper
bounds on the strategy complexity of certain threshold objectives
w.r.t.\ mean payoff, total payoff and point payoff.

\newpage
\bibliography{journals,conferences,autocleaned}

\begin{thebibliography}{10}

\bibitem{NIPS2004_2569}
Pieter Abbeel and Andrew~Y. Ng.
\newblock Learning first-order {Markov} models for control.
\newblock In {\em Advances in Neural Information Processing Systems 17}. MIT
  Press, 2004.
\newblock URL:
  \url{http://papers.nips.cc/paper/2569-learning-first-order-markov-models-for-control}.

\bibitem{Flesch:JOTA2020}
Galit Ashkenazi-Golan, J\'{a}nos Flesch, Arkadi Predtetchinski, and Eilon
  Solan.
\newblock Reachability and safety objectives in {Markov} decision processes on
  long but finite horizons.
\newblock {\em Journal of Optimization Theory and Applications}, 2020.

\bibitem{ModCheckPrinciples08}
Christel Baier and Joost-Pieter Katoen.
\newblock {\em Principles of Model Checking}.
\newblock MIT Press, 2008.

\bibitem{billingsley-1995-probability}
Patrick Billingsley.
\newblock {\em Probability and Measure}.
\newblock Wiley, 1995.
\newblock Third Edition.

\bibitem{blondel2000survey}
Vincent~D. Blondel and John~N. Tsitsiklis.
\newblock A survey of computational complexity results in systems and control.
\newblock {\em Automatica}, 2000.

\bibitem{bauerle2011finance}
Nicole Bäuerle and Ulrich Rieder.
\newblock {\em Markov Decision Processes with Applications to Finance}.
\newblock Springer-Verlag Berlin Heidelberg, 2011.

\bibitem{chatterjee2012survey}
K.~Chatterjee and T.~Henzinger.
\newblock A survey of stochastic $\omega$-regular games.
\newblock {\em Journal of Computer and System Sciences}, 2012.

\bibitem{ModCheckHB18}
Edmund~M. Clarke, Thomas~A. Henzinger, Helmut Veith, and Roderick Bloem,
  editors.
\newblock {\em Handbook of Model Checking}.
\newblock Springer, 2018.
\newblock \href {http://dx.doi.org/10.1007/978-3-319-10575-8}
  {\path{doi:10.1007/978-3-319-10575-8}}.

\bibitem{CGP:book}
E.M. Clarke, O.~Grumberg, and D.~Peled.
\newblock {\em Model Checking}.
\newblock MIT Press, Dec. 1999.

\bibitem{Feller:book}
William Feller.
\newblock {\em An Introduction to Probability Theory and Its Applications}.
\newblock Wiley \& Sons, second edition, 1966.

\bibitem{FPS:2018}
J\'{a}nos Flesch, Arkadi Predtetchinski, and William Sudderth.
\newblock Simplifying optimal strategies in limsup and liminf stochastic games.
\newblock {\em Discrete Applied Mathematics}, 2018.

\bibitem{Hill-Pestien:1987}
T.P. Hill and V.C. Pestien.
\newblock The existence of good {Markov} strategies for decision processes with
  general payoffs.
\newblock {\em Stoch. Processes and Appl.}, 1987.

\bibitem{KMST:CONCUR2021}
S.~Kiefer, R.~Mayr, M.~Shirmohammadi, and P.~Totzke.
\newblock Transience in countable {MDPs}.
\newblock In {\em International Conference on Concurrency Theory}, LIPIcs,
  2021.
\newblock Full version at \url{https://arxiv.org/abs/2012.13739}.

\bibitem{KMST:ICALP2019}
Stefan Kiefer, Richard Mayr, Mahsa Shirmohammadi, and Patrick Totzke.
\newblock {B\"uchi} objectives in countable {MDPs}.
\newblock In {\em International Colloquium on Automata, Languages and
  Programming}, LIPIcs, 2019.
\newblock Full version at \url{https://arxiv.org/abs/1904.11573}.
\newblock \href {http://dx.doi.org/10.4230/LIPIcs.ICALP.2019.119}
  {\path{doi:10.4230/LIPIcs.ICALP.2019.119}}.

\bibitem{KMST2020c}
Stefan Kiefer, Richard Mayr, Mahsa Shirmohammadi, and Patrick Totzke.
\newblock {Strategy Complexity of Parity Objectives in Countable MDPs}.
\newblock In {\em International Conference on Concurrency Theory}, 2020.
\newblock \href {http://dx.doi.org/10.4230/LIPIcs.CONCUR.2020.7}
  {\path{doi:10.4230/LIPIcs.CONCUR.2020.7}}.

\bibitem{KMSTW2020}
Stefan Kiefer, Richard Mayr, Mahsa Shirmohammadi, Patrick Totzke, and Dominik
  Wojtczak.
\newblock How to play in infinite {MDP}s (invited talk).
\newblock In {\em International Colloquium on Automata, Languages and
  Programming}, 2020.
\newblock \href {http://dx.doi.org/10.4230/LIPIcs.ICALP.2020.3}
  {\path{doi:10.4230/LIPIcs.ICALP.2020.3}}.

\bibitem{KMSW2017}
Stefan Kiefer, Richard Mayr, Mahsa Shirmohammadi, and Dominik Wojtczak.
\newblock {Parity Objectives in Countable MDPs}.
\newblock In {\em Annual IEEE Symposium on Logic in Computer Science}, 2017.
\newblock \href {http://dx.doi.org/10.1109/LICS.2017.8005100}
  {\path{doi:10.1109/LICS.2017.8005100}}.

\bibitem{MM:CONCUR2021}
Richard Mayr and Eric Munday.
\newblock {Strategy Complexity of Mean Payoff, Total Payoff and Point Payoff
  Objectives in Countable MDPs}.
\newblock In {\em International Conference on Concurrency Theory}, LIPIcs,
  2021.
\newblock The full version is available on arXiv.

\bibitem{Mostowski:84}
A.~Mostowski.
\newblock Regular expressions for infinite trees and a standard form of
  automata.
\newblock In {\em Computation Theory}, LNCS, 1984.

\bibitem{Ornstein:AMS1969}
Donald Ornstein.
\newblock On the existence of stationary optimal strategies.
\newblock {\em Proceedings of the American Mathematical Society}, 1969.
\newblock \href {http://dx.doi.org/10.2307/2035700}
  {\path{doi:10.2307/2035700}}.

\bibitem{Puterman:book}
Martin~L. Puterman.
\newblock {\em Markov Decision Processes: Discrete Stochastic Dynamic
  Programming}.
\newblock John Wiley \& Sons, Inc., 1st edition, 1994.

\bibitem{Satayana:wiki}
George Santayana.
\newblock Reason in common sense.
\newblock In {\em Volume 1 of The Life of Reason}. 1905.
\newblock URL: \url{https://en.wikipedia.org/wiki/George_Santayana}.

\bibitem{schal2002markov}
Manfred Sch{\"a}l.
\newblock {Markov} decision processes in finance and dynamic options.
\newblock In {\em Handbook of {Markov} Decision Processes}. Springer, 2002.

\bibitem{sigaud2013markov}
Olivier Sigaud and Olivier Buffet.
\newblock {\em Markov Decision Processes in Artificial Intelligence}.
\newblock John Wiley \& Sons, 2013.

\bibitem{Sudderth:2020}
William~D. Sudderth.
\newblock Optimal {Markov} strategies.
\newblock {\em Decisions in Economics and Finance}, 2020.

\bibitem{sutton2018reinforcement}
R.S. Sutton and A.G Barto.
\newblock {\em Reinforcement Learning: An Introduction}.
\newblock Adaptive Computation and Machine Learning. MIT Press, 2018.

\bibitem{Vardi:probabilistic}
Moshe~Y. Vardi.
\newblock Automatic verification of probabilistic concurrent finite-state
  programs.
\newblock In {\em Annual Symposium on Foundations of Computer Science}. {IEEE}
  Computer Society, 1985.
\newblock \href {http://dx.doi.org/10.1109/SFCS.1985.12}
  {\path{doi:10.1109/SFCS.1985.12}}.

\end{thebibliography}

\newpage
\appendix

\section{Strategy Classes}\label{app-st-classes}
We formalize the amount of \emph{memory} needed to implement strategies.
Let $\memory$ be a countable set of memory modes. An \emph{update function}
is a function $\updatefun: \memory\times \states \to \dist(\memory\times \states)$
that meets the following two conditions, for all modes $\memconf \in \memory$:
\begin{itemize}
	\item for all controlled states~$\state\in \zstates$, 
            the distribution
            $\updatefun((\memconf,\state))$ is over 
	$\memory \times \{\state'\mid \state \transition{} \state'\}$.
	\item for all random states $\state \in \rstates$, we have that
            $\sum_{\memconf'\in \memory} \updatefun((\memconf,\state))(\memconf',\state')=P(\state)(\state')$.
\end{itemize}

\newcommand{\inducedStrat}[2]{#1[#2]}

An update function~$\updatefun$ together with an initial memory~$\memconf_0$
induce a strategy~$\inducedStrat{\updatefun}{\memconf_0}:\states^*\zstates \to \dist(S)$ as follows.
Consider the Markov chain with states set $\memory \times \states$,
transition relation $(\memory\x\states)^2$ and probability function~$\updatefun$.
Any partial run $\rho=s_0 \cdots s_i$ in $\mdp$ gives rise to a
set 
{$H(\rho)=\{(\memconf_0,s_0) \cdots (\memconf_i,s_i) \mid \memconf_0,\ldots, \memconf_i\in \memory\}$}
of partial runs in this Markov chain.
Each $\rho s \in \state_0 \states^{*} \zstates$
induces a 
probability distribution~$\mu_{\rho \state}\in \dist(\memory)$, 
the probability $\mu_{\rho \state}(\memconf)$ is 
the  probability of being in state $(\memconf,\state)$
conditioned on having taken some partial run
from~$H(\rho \state)$.
We define~$\inducedStrat{\updatefun}{\memconf_0}$ such that
$\inducedStrat{\updatefun}{\memconf_0}(\rho \state)(\state')\eqdef\sum_{\memconf,\memconf'\in \memory} \mu_{\rho \state}(\memconf) \updatefun((\memconf,\state))(\memconf',\state')$
for all $\rho \state\in \states^{*} \zstates$ and~$\state' \in \states$.

We say that a strategy $\zstrat$ can be \emph{implemented} with
memory~$\memory$ (and initial memory $\memconf_0$) if there exists 
an update function $\updatefun$ such that $\zstrat=\inducedStrat{\updatefun}{\memconf_0}$.
In this case we may also write $\inducedStrat{\zstrat}{\memconf_0}$ to explicitly specify the initial
memory mode $\memconf_0$. Based on this, we can define several classes of strategies:

 A strategy $\zstrat$ is \emph{memoryless}~(M) (also called \emph{positional}) 
if it can be implemented with a memory  of size~$1$. 
We may view
M-strategies as functions $\zstrat: \zstates \to \dist(\states)$.
A strategy~$\zstrat$ is \emph{finite memory}~(F) if 
there exists a finite memory~$\memory$ implementing~$\zstrat$.
More specifically, a strategy is \emph{$1$-bit} 
if it can be implemented with a memory of size~$2$.
Such a strategy is then determined by a function
$\updatefun:\{0,1\}\x \states \to \dist(\{0,1\} \x \states)$.
\emph{Deterministic 1-bit} strategies are  are both deterministic and 1-bit.

\section{Missing Proofs from \cref{sec:transientPre}} \label{app-transientPre}

In this section, we prove \cref{lem:reduction-finite-branch} from the main body.

\lemreductionfinitebranch*

\begin{proof}

Given an infinitely branching  MDP~$\mdp=\mdptuple$ with set~$S$ of states and an initial state~$s_0\in S$, we construct a finitely branching~$\mdp'$
with set~$S'$ of states such that $s_0\in S'$.
The reduction uses the concept of ``recurrent ladders''; see  Figure~\ref{fig:no-optimal}. 

\medskip

The reduction is as follows.
\begin{itemize}
  \item For all controlled state~$s$ in~$\mdp$ with infinite branching $s \to s_i$ for all $i\geq 1$, 
  we introduce a recurrent ladder in~$\mdp'$,  consisting the controlled states $(\ell_{s,i})_{i\in \mathbb{N}}$ and random states~$(\ell'_{s,i})_{i\geq 1}$.
 The set of transitions includes $s \to \ell_{s,0}$ and 
$\ell_{s,0} \to \ell_{s,1}$,  and for all $i\geq 1$ two transitions $\ell_{s,i} \to \ell'_{s,i}$, and $\ell_{s,i} \to s_i$. Moreover, $\ell'_{s,i} \step{\frac{1}{2}} \ell_{s,i+1}$ and $\ell'_{s,i} \step{\frac{1}{2}} \ell_{s,i-1}$. 
   Here, all states of the recurrent ladder are fresh states.
 
  \item For all random states $s$ in~$\mdp$ with infinite branching $s \step{p_i} s_i$ for all $i\geq 1$,  we use  a gadget $s \step{1}{} z_1, z_i \step{1-p_i'} z_{i+1}, z_i \step{p_i'} s_i$ for all
     $i \geq 1$, with fresh random states $z_i$ and suitably adjusted probabilities
    $p_i'$ to ensure that the gadget
    is left at state $s_i$ with exact probability~$p_i$, i.e.,
    $p_i' = p_i/(\prod_{j=1}^{i-1}(1-p_j'))$. 
\end{itemize}

See Figure~\ref{fig:reduction-inf} for a partial illustration.


Given $\rho=q_0 q_1 \cdots q_n\in S^+$
denote by $\mathrm{last}(\rho)=q_n$ the last state of $\rho$.

\bigskip

For the first item, let $\alpha_1$ be a general strategy $\alpha_1:\states^*\zstates \to \dist(S)$ in $\mdp$.
We define $\beta_1$ in~$\mdp'$ with the use of memory $\memory =\states^*\times \{\bot\}\cup\{i \in \mathbb{N}\mid i \leq 1\})$ and an update function~$u$; see 
Appendix~\ref{app-st-classes}. 
The definition of  $u:\memory \times S' \to \dist( \memory \times S')$ is as follows.
For all $q,q' \in S'$ and $\rho\in S^{*}$, 
\begin{itemize}
	\item for all $\memconf=(\rho,\bot)$ and $\memconf'=(\rho q,\bot)$, 
\[
  u(\memconf,q)(\memconf',q') =
  \begin{cases}
    \probp(q)(q') & \parbox{8cm}{if  $q\in \rstates$;}
  \\\\
      \alpha_1(\rho q)(q') &  \parbox{8cm}{if  $q\in \zstates$ is finitely branching in $\mdp$;}		
  \end{cases}
\]

	\item for all $\memconf=(\rho,\bot)$ and $\memconf'=(\rho q,j)$ with $j\geq 1$, 
\[
  u(\memconf,q)(\memconf',q') =
  \begin{cases}  
   \alpha_1(\rho q)(q_j) &  \parbox{8cm}{if  $q\in \zstates$ is infinitely branching in $\mdp$ with $q\to q_i$
    for all $i\geq 1$,  and  $q'=\ell_{q,0}$;}   		
  \end{cases}
\]

	\item for all $\memconf,\memconf'=(\rho,j)$  with $j$, 
\[
 u(\memconf,q)(\memconf,q') =
  \begin{cases}
   1 &  \parbox{8cm}{if $s=\mathrm{last}(\rho)$ was infinitely branching in~$\mdp$, and if    $q=\ell_{s,i}$, $q'=\ell'_{s,i}$ and $i<j$;} 		
  \end{cases}
\]

	\item for all $\memconf=(\rho,j)$ and $\memconf'=(\rho,\bot)$ with $j\geq 1$, 
\[
  u(\memconf,q)(\memconf,q') =
  \begin{cases}  
   1 &  \parbox{8cm}{if $s=\mathrm{last}(\rho)$ was infinitely branching in~$\mdp$ with $s\to s_i$
    for all $i\geq 1$, and if   $q=\ell_{s,j}$ and $q'=s_j$;}   		
  \end{cases}
\]

\item and $u(\memconf,q)(\memconf',q')=0$ otherwise. 
\end{itemize}

The strategy $\beta_1$ consists of the above update function~$u$ and 
 initial memory~$\memconf_0=(\epsilon,\bot)$ where $\epsilon$ is the empty run.
Intuitively speaking, in every step $\beta_1$ considers the memory~$(\rho,x)$  
and the current state~$q$ to simulate what $\alpha_1$ would have played in~$\mdp$.
The memory~$(\rho,x)$ is such that $\rho$ invariantly demonstrates the history of run projected into the state space~$S$ of~$\mdp$ (omitting the introduced states due to the reduction). 
The second component $x$ in the memory is $\bot$ if the current state is in $S$, 
and otherwise it is a natural number~$j \geq 1$. Such a natural number $j$ 
indicates that the controller is currently on a recurrent ladder  and must leaves the ladder at the $j$-th controlled state on the ladder. 
Subsequently, $\beta_1$ starts with memory $(\epsilon, \bot)$  and $q=s_0$, 
\begin{itemize}
	\item when $q$ is  a random state in $\mdp$, $\beta_1$ only append $q$ to $\rho$ to keep track of the history;
  \item when $q$ is  a finitely branching state in $\mdp$, $\beta_1$ plays as $\alpha_1(\rho q)$ and append $q$ to $\rho$;
	\item when $q$ is  an infinitely branching state in $\mdp$ with successors $(q_j)_{j\geq 1}$, for every $j\geq 1$,
	 the strategy $\beta_1$ chooses the first state~$\ell_{q,0}$ of the recurrent ladder for~$q$ 
	while flipping the memory from~$(\rho,\bot)$ to $(\rho,j)$ with  probability~$\sigma(\rho q)(q_j)$. This   
	requires the ladder to be traversed to state $\ell_{q,j}$ and left from there to $q_j$, 
	the $j$-th successor of $q$ in~$\mdp$. Furthermore, $\beta_1$ append $q$ to $\rho$;
	\item when $q$ is $\ell_{s,i}$ and memory is~$(\rho,j)$ with $\mathrm{last}(\rho)=s$, 
	if $i\leq j$ then $\beta_1$ continues to stay on the recurrent ladder by picking~$\ell'_{s,i}$;
	\item when $q$ is $\ell_{s,j}$ and memory is~$(\rho,j)$ with $\mathrm{last}(\rho)=s$, 
	$\beta_1$ leaves the ladder from $\ell_{s,j}$ to $q_j$ which is the $j$-th 
	successor of state~$s$ in $\mdp;$. In addition, the memory $(\rho,j)$ is flipped back to $(\rho,\bot)$.
\end{itemize}
By the construction of $\mdp'$ and $\beta_1$, 
it follows that $\beta_1$ in~$\mdp'$ faithfully simulates~$\alpha_1$ in~$\mdp$ and 
thus $\probm_{\mdp,s_0,\alpha_1}(\transient)=\probm_{\mdp',s_0,\beta_1}(\transient)$.

\bigskip

For the second item, let $\beta_2: \zstates' \to \zstates'$ be an MD strategy in 
$\mdp'$ where $\zstates' \subseteq S$ is the set of controlled states in $\mdp'$.
We define an MD strategy $\alpha_2:S \to S$ in $\mdp$ 
 as follows. For all controlled states $s\in S'$, 
\[
  \alpha_2(s) =
  \begin{cases}
    \beta_2(q) & \parbox{10.5cm}{if  $s\in \zstates$ is finitely branching in~$\mdp$;}
  \\\\
   s_j &  \parbox{10.5cm}{if  $s\in \zstates$ is infinitely branching in $\mdp$ with the successors~$(s_i)_{i\geq 1}$, and if there exists $j\in \mathbb{N}$ such that $
	\beta_2(s)=\ell_{s,0}$, $\beta_2(\ell_{s,0})=\ell_{s,1}$ and  $\beta_2(\ell_{s,i})=\ell'_{s,i}$ for all $0 < i<j$, and $\beta_2(\ell_{s,j})=s_j$;}	
		\\\\
		 s_1 &  \parbox{10.5cm}{if  $s\in \zstates$ is infinitely branching in $\mdp$ with the successors~$(s_i)_{i\geq 1}$, and if 
	$\beta_2(s)=\ell_{s,0}$,  $\beta_2(\ell_{s,0})=\ell_{s,1}$ and  $\beta_2(\ell_{s,i})=\ell'_{s,i}$ for all $i>0$.}		
  \end{cases}
\]
Note that the above strategy is well-defined, as in every recurrent ladder in~$\mdp'$, either there  exists some $j$
such that $\beta_2$ exits the ladder at its $j$-th controller state, or 
$\beta_2$ choose to stay on the ladder forever. In the latter case,  
by a Gambler's Ruin argument, the  probability of $\transient$ for those runs staying on the ladder forever  is $0$. 
By the construction of~$\mdp'$,
$\alpha_2$ faithfully simulates $\beta_2$ unless when $\beta_2$ stays on a ladder forever and  
the prospect of  $\transient$ becomes~$0$. In those cases, $\alpha_2$ continues playing what $\beta_2$
would have played if it exited the $s$-ladder at $\ell_{s,1}$. 

It follows that
 $\probm_{\mdp,s_0,\alpha_2}(\transient)\geq \probm_{\mdp',s_0,\beta_2}(\transient)$.
\end{proof}

\section{1-Bit Strategy for $\reset(F) \cap \transient$}\label{app:buchi-transient}
\thmMDPonebitBuchi*
\begin{proof}
We prove the claim for finitely branching $\mdp$ first and transfer the result to general MDPs at the end.

Let $\mdp=\mdptuple$ be a finitely branching countable MDP,
  $\initstates \subseteq \states$ a finite set of initial states and $F \subseteq \states$ a 
  set of goal states and $\formula \eqdef \reset(F) \cap \transient$
  the objective.

  \renewcommand{\optval}[1][\formula]{\valueof{\mdp,#1}{\state}}

For every $\eps >0$ and every $\state \in \initstates$ there exists an $\eps$-optimal strategy $\zstrat_\state$
such that
\begin{equation}\label{eq:eps-opt}
\probm_{\mdp,\state,\zstrat_\state}(\formula) \ge \optval-\eps.
\end{equation}
However, the strategies $\zstrat_\state$ might differ from each other and
might use randomization and a large (or even infinite)
amount of memory.
We will construct a single deterministic strategy $\zstrat'$ that uses only 1 bit of
memory such that $\forall_{\state \in \initstates}\, \probm_{\mdp,\state,\zstrat'}(\formula) \ge \optval-2\eps$.
This proves the claim as $\eps$ can be chosen arbitrarily small.

In order to construct $\zstrat'$, we first observe the behavior of the
finitely many $\zstrat_\state$ for $\state \in \initstates$
on an infinite, increasing sequence of finite subsets of $\states$.
Based on this, we define a second stronger objective $\formula'$ with
\begin{equation}\label{eq:prime-implies-normal}
\formula' \subseteq \formula,
\end{equation}
and show that all $\zstrat_\state$ 
attain at least $\optval-2\eps$
w.r.t.\ $\formula'$, i.e.,
\begin{equation}\label{eq:observe-orig}
    \forall_{\state \in \initstates}\, \probm_{\mdp,\state,\zstrat_\state}(\formula') \ge \optval-2\eps.
\end{equation}
We construct $\zstrat'$ as a deterministic 1-bit \emph{optimal} strategy w.r.t.\ $\formula'$
from all $\state \in \initstates$ and obtain
\begin{align*}
\probm_{\mdp,\state,\zstrat'}(\formula)\ &
\ge\ \probm_{\mdp,\state,\zstrat'}(\formula') && \text{by \cref{eq:prime-implies-normal}} \\
  & \ge\ \probm_{\mdp,\state,\zstrat_\state}(\formula') && \text{by optimality of $\zstrat'$ for $\formula'$}\\
  & \ge \ \optval-2\eps && \text{by \cref{eq:observe-orig}}.
\end{align*}

\medskip
\noindent
\smallparg{Behavior of $\zstrat$, objective $\formula'$ and properties
  \cref{eq:prime-implies-normal} and \cref{eq:observe-orig}.}
We start with some notation.
Let $\bubble{k}{X}$ be the set of states that can be reached from some state
in the set $X$ within at most $k$ steps.
Since $\mdp$ is finitely branching, $\bubble{k}{X}$ is finite if $X$ is
finite.
Let
$\setfb{\le k}{X} \eqdef\{\play \in \states^{\omega} \mid \exists t \le k.\, \play(t) \in X\}$
and
$\setfb{\ge k}{X} \eqdef\{\play \in \states^{\omega} \mid \exists t \ge k.\, \play(t) \in X\}$
denote the property of visiting the set $X$ (at least once) within at most
(resp.\ at least) $k$ steps.
Moreover, let $\eps_i \eqdef \eps \, \cdot\, 2^{-(i+1)}$.

\begin{figure*}[t]
   \begin{center}			
\centering

\begin{tikzpicture}[>=latex',shorten >=1pt,node distance=1.9cm,on grid,auto,
roundnode/.style={circle, draw,minimum size=1.5mm},
squarenode/.style={rectangle, draw,minimum size=2mm}]

\draw (3.8,0) ellipse (4.1cm and 1.3cm);
\draw [green!50!black,fill=green!40!white](-.3,0) arc (180:0:4.1cm and 1.3cm);
\draw [fill=white](2.8,0) ellipse (3.1cm and 1.1cm);
\draw (2.3,0) ellipse (2.6cm and .9cm);
\draw [green!50!black,fill=green!40!white](-.3,0) arc (180:0:2.6cm and .9cm);
\draw [fill=white](1.3,0) ellipse (1.6cm and .7cm);
\draw (.8,0) ellipse (1.1cm and .5cm);
\draw [green!50!black,fill=green!40!white](-.28,0) arc (180:0:1.09cm and .5cm);

\draw (11,1.39) arc (45:-45:3cm and 2cm);
\draw  (13.05,0.05) arc (0:-50:3cm and 2.2cm);

\draw [draw=none,name path=A] (13.05,0.05) arc (0:50:3cm and 2.2cm);
\draw [draw=none,name path=B](11,1.39) arc (45:0:3cm and 2cm);
\tikzfillbetween[of=A and B]{green!40!white};
\draw [green!50!black,name path=A] (13.05,0.05) arc (0:50:3cm and 2.2cm);
\draw [green!50!black,name path=B](11,1.39) arc (45:0:3cm and 2cm);

\draw (10,1.2) arc (40:-40:3cm and 1.8cm);
\draw[-,green!50!black] (0,0)--(1.95,0.05);
\draw[-,green!50!black] (2.9,0.02)--(4.95,0.05);
\draw[-,green!50!black](5.9,0.02)--(7.95,0.05);
\draw[-,green!50!black](11.87,-0.01)--(13.1,0.05);

\node [roundnode,fill=white] (s) at (0,0) {$I$};

\node[draw=none](dot1) at (9.2,0) {{\large $\cdots$}};
\node[draw=none](dot2) at (14,0) {{\large $\cdots$}};

\node[draw=none](K1) at (1.15,-.3)  {$K_1$};
\node[draw=none](L1) at (2.3,-.3)   {$L_1$};
\node[draw=none](K2) at (4.3,-.3)   {$K_2$};
\node[draw=none](L2) at (5.4,-.3)   {$L_2$};
\node[draw=none](K3) at (7.4,-.3)   {$K_3$};
\node[draw=none](Li) at (11.4,-.3)  {$L_{i-1}$};
\node[draw=none](Kii) at (12.6,-.3) {$K_i$};
\node[draw=none](Lii) at (13.6,-.3) {$L_i$};

\end{tikzpicture}
		\end{center}
	\caption{To show the bubble construction. The green region in $K_1$ is $F_1$, and for all $i\geq 2$, the green region in
	$K_{i}\setminus L_{i-1}$ is $F_i$.
}
\label{fig:KLBubbles}
\end{figure*}
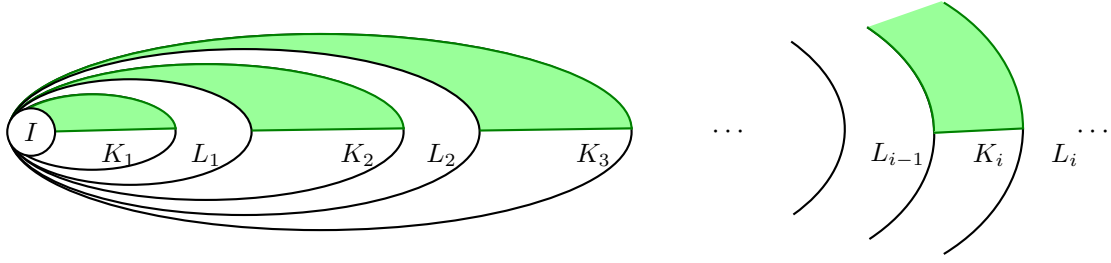

\begin{restatable}{lemma}{lembubbleextension}\label{lem-bubble-extension}
  Assume the setup of \cref{thm:MDP-one-bit-Buchi},
  $\formula \eqdef \reset(F) \cap \transient$ 
  and a strategy $\zstrat_\state$ from each $\state \in \initstates$.
  Let $X \subseteq \states$ be a finite set of states and $\eps' > 0$.
\begin{enumerate}
\item
There is $k \in \N$ such that
$
\forall_{\state\in\initstates}\,\probm_{\mdp,\state,\zstrat_\state}(\formula \cap 
\lnot(\setfb{\le k}{F \setminus X})
) \ \le \ \eps'.
$\label{lem-bubble-extension:ad1}
\item
There is $l \in \N$ such that
$\forall_{\state\in\initstates}\,
\probm_{\mdp,\state,\zstrat_\state}(\formula \cap \setfb{\ge l}{X}) \ \le \ \eps'$.
\label{lem-bubble-extension:ad2}
\end{enumerate}
\end{restatable}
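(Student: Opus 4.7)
The plan for both parts is to combine a direct consequence of $\transient$ with continuity of measure, then close over the finite index set $\initstates$ by taking a maximum. Concretely: any run $\play \in \formula = \reset(F) \cap \transient$ visits $F$ infinitely often while visiting each individual state only finitely often; since $X$ is finite, $\play$ visits $X$ only finitely often, and therefore visits $F \setminus X$ infinitely often. Both parts of the lemma will be degradations of this single observation, measured against a monotone family of events.

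For the first part, I would fix $\state \in \initstates$ and use that $\formula \subseteq \bigcup_k \setfb{\le k}{F \setminus X}$ (every run in $\formula$ visits $F \setminus X$ at some finite step, as noted above). Since the events $\setfb{\le k}{F \setminus X}$ are increasing in $k$, continuity of $\probm_{\mdp,\state,\zstrat_\state}$ from below yields
\[
\lim_{k \to \infty} \probm_{\mdp,\state,\zstrat_\state}\bigl(\formula \cap \setfb{\le k}{F \setminus X}\bigr) \;=\; \probm_{\mdp,\state,\zstrat_\state}(\formula),
\]
so some $k_\state$ makes the complementary tail $\probm_{\mdp,\state,\zstrat_\state}(\formula \setminus \setfb{\le k_\state}{F \setminus X})$ at most $\eps'$. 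Taking $k \eqdef \max_{\state \in \initstates} k_\state$ gives a single bound that works uniformly across the finitely many initial states.

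For the second part, I would again fix $\state$ and observe that the very same transience argument shows $\formula \cap \bigcap_l \setfb{\ge l}{X} = \emptyset$: a run satisfying $\transient$ visits each of the finitely many states of $X$ only finitely often, so there is a last visit to $X$. The events $\setfb{\ge l}{X}$ are decreasing in $l$, so continuity from above yields
\[
\lim_{l \to \infty} \probm_{\mdp,\state,\zstrat_\state}\bigl(\formula \cap \setfb{\ge l}{X}\bigr) \;=\; 0,
\]
and $l \eqdef \max_{\state \in \initstates} l_\state$ completes the argument. I do not anticipate any serious technical obstacle here: the entire content of the lemma is that the finiteness of $X$, mediated by $\transient$, converts the two ``eventually'' features of $\formula$ (B\"uchi visits escaping $X$, and $X$ being abandoned) into monotone measurable families whose probabilities can be truncated arbitrarily well on the two sides of $\formula$.
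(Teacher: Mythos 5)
Your proposal is correct and follows essentially the same route as the paper's proof: both reduce to a single initial state by taking a maximum over the finite set $\initstates$, use the finiteness of $X$ together with $\transient$ to conclude that runs in $\formula$ leave $X$ forever and hence visit $F \setminus X$, and then apply continuity of measures (from below for part~1, from above for part~2). No gaps.
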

\begin{proof}
It suffices to show the properties for a single $\state,\zstrat_\state$ since
one can take the maximal $k,l$ over the finitely many $\state \in \initstates$.

We observe that
$\formula \subseteq \transient = \bigcap_{\state \in \states} \eventually\always\neg(\state)
\subseteq \bigcap_{\state \in X} \eventually\always\neg(\state) =
\eventually\always\neg(X)$,
where the last equivalence is due to the finiteness of $X$.

Towards \ref{lem-bubble-extension:ad1}, we have
$\formula = \always\eventually F \cap \transient
\subseteq \always\eventually F \cap \eventually\always\neg(X)
\subseteq
\always\eventually (F \setminus X)
~\subseteq~ \eventually(F \setminus X) = \bigcup_{k \in \N} \setfb{\le k}{F \setminus X}$
and therefore that $\formula \cap \bigcap_{k \in \N}
\lnot(\setfb{\le k}{F \setminus X})
= \emptyset$.
It follows from the continuity of measures that $\lim_{k \to \infty}
\probm_{\mdp,\state,\zstrat_\state}(\formula \cap
\lnot(\setfb{\le k}{F \setminus X})
) = 0$.

Towards \ref{lem-bubble-extension:ad2},
we have
$\formula \cap \cap_l \setfb{\ge l}{X}
\subseteq 
\eventually\always\neg(X) \cap \cap_l \setfb{\ge l}{X}
=
\emptyset$.
By continuity of measures we obtain
$\lim_{l \rightarrow \infty} \probm_{\mdp,\state,\zstrat_\state}(\formula \cap \setfb{\ge l}{X})
= 0$.
\end{proof}

In the following, let us write $\complementof{X}$ to denote the complement of a set $X\subseteq S^\omega$ of runs.

By Lemma~\ref{lem-bubble-extension}(\ref{lem-bubble-extension:ad1}) there is a
$k_1$ such that for $K_1 \eqdef \bubble{k_1}{\initstates}$ and $F_1 \eqdef F \cap K_1$ we
have $\forall_{\state\in\initstates}\,\probm_{\mdp,\state,\zstrat_\state}(\formula \cap \complementof{K_1^* F_1 S^\omega}) \le \eps_1$.
We define the pattern
$$R_1 \eqdef (K_1 \setminus F_1)^*F_1$$
and obtain $\forall_{\state\in\initstates}\,\probm_{\mdp,\state,\zstrat_\state}(\formula \cap \complementof{R_1 S^\omega}) \le \eps_1$.
By Lemma~\ref{lem-bubble-extension}(\ref{lem-bubble-extension:ad2})
there is an $l_1 > k_1$ such that $\forall_{\state\in\initstates}\,\probm_{\mdp,\state,\zstrat_\state}(\setfb{\ge l_1}{K_1}) \le \eps_1$.
Define $L_1 \eqdef \bubble{l_1}{\initstates}$.
By Lemma~\ref{lem-bubble-extension}(\ref{lem-bubble-extension:ad1}) there is a
$k_2 > l_1$ such that for $K_2 \eqdef \bubble{k_2}{\initstates}$ and
$F_2 \eqdef F \cap K_2 \setminus L_1$ we have $\forall_{\state\in\initstates}\,\probm_{\mdp,\state,\zstrat_\state}(\formula \cap \complementof{K_2^* F_2 S^\omega}) \le \eps_2$.
We define the pattern
$$R_2 \eqdef (K_2 \setminus F_2)^*F_2$$
and obtain
$\forall_{\state\in\initstates}\,\probm_{\mdp,\state,\zstrat_\state}(\formula \cap \complementof{R_2
S^\omega}) \le \eps_2$ and, via a union bound,
$\forall_{\state\in\initstates}\,\probm_{\mdp,\state,\zstrat_\state}(\formula \cap \complementof{R_2 (S \setminus K_1)^\omega}) \le \eps_1 + \eps_2$.
By another union bound it follows that $\forall_{\state\in\initstates}\,\probm_{\mdp,\state,\zstrat_\state}(\formula \cap \complementof{R_1 R_2 (S \setminus K_1)^\omega}) \le 2 \eps_1 + \eps_2$.

Proceed inductively for $i = 2, 3, \ldots$ as follows (see \cref{fig:KLBubbles} for an illustration).
By Lemma~\ref{lem-bubble-extension}(\ref{lem-bubble-extension:ad2})
there is an $l_i > k_i$ such that $\forall_{\state\in\initstates}\,\probm_{\mdp,\state,\zstrat_\state}(\setfb{\ge l_i}{K_i}) \le \eps_i$.
Define $L_i \eqdef \bubble{l_i}{\initstates}$.
By Lemma~\ref{lem-bubble-extension}(\ref{lem-bubble-extension:ad1}) there is
$k_{i+1} > l_i$ such that for
$K_{i+1} \eqdef \bubble{k_{i+1}}{\initstates}$ and $F_{i+1} \eqdef F \cap
K_{i+1} \setminus L_i$ we have
$\forall_{\state\in\initstates}\,\probm_{\mdp,\state,\zstrat_\state}(\formula \cap \complementof{(K_{i+1} \setminus F_{i+1})^* F_{i+1} S^\omega}) \le \eps_{i+1}$.
By a union bound, $\forall_{\state\in\initstates}\,\probm_{\mdp,\state,\zstrat_\state}(\formula \cap \complementof{(K_{i+1} \setminus F_{i+1})^* F_{i+1} (S \setminus K_i)^\omega}) \le \eps_i + \eps_{i+1}$.
By an induction hypothesis we have $\forall_{\state\in\initstates}\,\probm_{\mdp,\state,\zstrat_\state}(\formula \cap \complementof{R_1 R_2 \ldots R_i (S \setminus K_{i-1})^\omega}) \le 2 \eps_1 + \cdots + 2 \eps_{i-1} + \eps_i$.
We define the pattern
$$R_{i+1} \eqdef (K_{i+1} \setminus (F_{i+1} \cup K_{i-1}))^*F_{i+1}.$$
Using that
$
 (K_{i+1} \setminus F_{i+1})^* F_{i+1} (S \setminus K_i)^\omega \ \cap \ R_1 R_2 \ldots R_i (S \setminus K_{i-1})^\omega
\subseteq
R_1 R_2 \ldots R_{i+1} (S \setminus K_i)^\omega,
 $
we get
\begin{equation}\label{eq:eps-bound}
    \forall_{\state\in\initstates}\,\probm_{\mdp,\state,\zstrat_\state}(\formula \cap \complementof{R_1 R_2 \ldots R_{i+1} (S \setminus K_i)^\omega})
\quad\le\quad
2 \eps_1 + \cdots + 2 \eps_{i} + \eps_{i+1}
\quad\le\quad
\eps.
\end{equation}

We now define the Borel objectives
$R_{\le i} \eqdef R_1 R_2 \dots R_i \states^\omega$ and
\[
\formula' \eqdef \bigcap_{i \in \N} R_{\le i}.
\]
Since $F_i \cap F_k = \emptyset$ for $i \neq k$ and $\formula'$ implies a visit
to the set $F_i$ for all $i \in \N$, we have
$\formula' \subseteq \reset(F)$.
Now we show that $\formula' \subseteq \transient$.
Let $\state$ be an arbitrary state and $\rho$ a run from some state in
$\initstates$ that satisfies $\formula'$.
If $\state$ is not reachable from $\initstates$ then $\rho$ never visits
$\state$.
Otherwise, there exists some minimal $j$ such that $\state \in K_j$.
The run $\rho$ must eventually visit $F_{j+1}$ and after visiting $F_{j+1}$
it cannot visit $K_j$ (and thus $\state$) any more. Therefore $\rho$ visits
$\state$ only finitely often.
Thus $\formula' \subseteq \transient$.
Together we have $\formula' \subseteq \reset(F) \cap \transient = \formula$ and obtain \cref{eq:prime-implies-normal}.

Moreover, $R_{\le 1} \supseteq R_{\le 2} \supseteq R_{\le 3} \dots$ is an
infinite decreasing sequence of Borel objectives.
For every $\state \in \initstates$ we have
\begin{align*}
\probm_{\mdp,\state,\zstrat_\state}(\formula')\ &
=\ \probm_{\mdp,\state,\zstrat_\state}(\cap_{i=1}^\infty R_{\le i}) && \text{by def.\ of $\formula'$}\\
& = \ \lim_{i \to \infty} \probm_{\mdp,\state,\zstrat_\state}(R_{\le i}) && \text{by cont.\ of measures}\\
& = \ \lim_{i \to \infty} 1-\probm_{\mdp,\state,\zstrat_\state}(\complementof{R_{\le i}}) && \text{by duality}\\
& = \ \lim_{i \to \infty} 1-(\probm_{\mdp,\state,\zstrat_\state}(\complementof{R_{\le i}}\cap
\formula) + \probm_{\mdp,\state,\zstrat_\state}(\complementof{R_{\le i}} \cap \complementof{\formula})) && \text{case split}\\
                                                                                                 & \ge \ \lim_{i \to \infty} 1-(\eps + \probm_{\mdp,\state,\zstrat_\state}(\complementof{R_{\le i}} \cap  \complementof{\formula})) &&
\text{by \cref{eq:eps-bound}}\\
& \ge \ \lim_{i \to \infty} 1-(\eps + \probm_{\mdp,\state,\zstrat_\state}(\complementof{\formula'}\cap \complementof{\formula})) &&
\text{since $\formula' \subseteq R_{\le i}$}\\
& = \ 1-(\eps + 1-\probm_{\mdp,\state,\zstrat_\state}(\formula' \cup \formula)) &&
\text{by duality}\\
& = \ \probm_{\mdp,\state,\zstrat_\state}(\formula) - \eps &&
\text{by \cref{eq:prime-implies-normal}}\\
& \ge \ \optval-2\eps && \text{by \cref{eq:eps-opt}}
\end{align*}
Thus we obtain property \cref{eq:observe-orig}.

\medskip
\noindent
\smallparg{Definition of the 1-bit strategy $\zstrat'$.}
We now define our deterministic 1-bit strategy $\zstrat'$ that is optimal for
objective $\formula'$ from \emph{every} $\state \in \initstates$.
First we define certain ``suffix'' objectives of $\formula'$.
Recall that $R_i = (K_{i} \setminus (F_{i} \cup K_{i-2}))^*F_{i}$.
Let $R_{i,j} \eqdef R_i R_{i+1} \dots R_j \states^\omega$
and $R_{\ge i} \eqdef \bigcap_{j \ge i} R_{i,j}$.
In particular, this means that $\formula' = R_{\ge 1}$.
Every run $w$ from some state $\state \in \initstates$ that satisfies $\formula'$ can be split into parts
before and after the first visit to set $F_i$, i.e.,
$w = w_1\state'w_2$ where $w_1\state' \in R_{\le i}$, $\state' \in F_i$ and
$\state' w_2 \in R_{\ge i+1}$.
(Note also that $w_2$ cannot visit any states in $K_{i-1}$.)
Thus it will be useful to consider the
objectives $R_{\ge i+1}$ for runs that start in states $\state' \in F_i$.
For every state $\state' \in F_i$ we consider its value w.r.t.\ the
objective $R_{\ge i+1}$, i.e.,
$\valueof{\mdp,R_{\ge i+1}}{\state'} \eqdef \sup_{\hat{\zstrat}} \probm_{\mdp,\state',\hat{\zstrat}}(R_{\ge i+1})$.

For every $i \ge 1$ we consider the finite subspace $K_i\setminus K_{i-2}$.
In particular, it contains the sets $F_{i-1}$ and $F_i$.
(For completeness let $K_0 \eqdef F_0 \eqdef \initstates$ and
$K_{-1} \eqdef \emptyset$.)
It is not enough to maximize the probability of reaching the set $F_i$ in each
$K_i$ individually. One also needs to maximize the potential of visiting
further sets $F_{i+1}, F_{i+2}, \dots$ in the indefinite future.
Thus we define the bounded total reward
objective $B_i$ for runs starting in $F_{i-1}$ as follows.
Runs that exit the subspace (either by leaving $K_i$ or by visiting $K_{i-2}$)
before visiting $F_i$ get reward $0$.
When some run reaches the set $F_i$ \emph{for the first time}
in some state $\state'$ then this run gets the reward of $\valueof{\mdp,R_{\ge i+1}}{\state'}$.
We can consider an induced finite MDP $\hat{\mdp}$ with state space $K_i\setminus K_{i-2}$,
plus a sink state (with reward $0$) that is reached immediately after visiting
any state in $F_i$ and whenever one exits the set $K_i\setminus K_{i-2}$.
In $\hat{\mdp}$ one gets a reward of
$\valueof{\mdp,R_{\ge i+1}}{\state'}$ for visiting $\state' \in F_i$ as above.
By \cite[Theorem 7.1.9]{Puterman:book}, there exists a uniform optimal MD strategy $\zstrat_i$ for this bounded total reward objective on the induced finite MDP $\hat{\mdp}$,
which can be directly applied for objective $B_i$ on the subspace $K_i\setminus K_{i-2}$ in $\mdp$.
(The strategy $\zstrat_i$ is not necessarily unique, but our results
hold regardless of which of them is picked.)

We now define $\zstrat'$ by combining
different MD strategies $\zstrat_i$, depending on the current state and on the
value of the 1-bit memory.
The intuition is that the strategy $\zstrat'$ has two modes: normal-mode
and next-mode.
In a state $\state' \in K_i \setminus K_{i-1}$, if the memory is $i\pmod 2$
then the strategy is in normal-mode and plays towards reaching $F_i$.
Otherwise, the strategy is in next-mode and plays towards reaching $F_{i+1}$
(normally this happens because $F_i$ has already been seen).

Initially $\zstrat'$ starts in a state $\state \in \initstates$ with the 1-bit memory set to
$1$.
We define the behavior of $\zstrat'$ in a state $\state' \in K_i \setminus K_{i-1}$
for every $i \ge 1$.
\begin{itemize}
\item
If the 1-bit memory is $i \pmod 2$ and $\state' \notin F_i$ then $\zstrat'$ plays like
$\zstrat_i$.
(Intuitively, one plays towards $F_i$, since one has not yet visited it.)
\item
If the 1-bit memory is $i \pmod 2$ and $\state' \in F_i$ then the 1-bit memory
is set to $(i+1) \pmod 2$, and $\zstrat'$ plays like $\zstrat_{i+1}$.
(Intuitively, one records the fact that one has already seen $F_i$ and then
targets the next set $F_{i+1}$.)
\item
If the 1-bit memory is $(i+1) \pmod 2$ then $\zstrat'$ plays like
$\zstrat_{i+1}$.
(Intuitively, one plays towards $F_{i+1}$, since one has already visited $F_i$.)
\end{itemize}
\begin{figure}[t]
   \begin{center}		
\centering

\makeatletter
\tikzset{
  use path for main/.code={%
    \tikz@addmode{%
      \expandafter\pgfsyssoftpath@setcurrentpath\csname tikz@intersect@path@name@#1\endcsname
    }%
  },
  use path for actions/.code={%
    \expandafter\def\expandafter\tikz@preactions\expandafter{\tikz@preactions\expandafter\let\expandafter\tikz@actions@path\csname tikz@intersect@path@name@#1\endcsname}%
  },
  use path/.style={%
    use path for main=#1,
    use path for actions=#1,
  }
}
\makeatother

\begin{tikzpicture}[>=latex',shorten >=1pt,node distance=1.9cm,on grid,auto,
fshade/.style={draw=none,fill=green!40!white,rotate=0},
bitone/.style={red,very thick},
bitzero/.style={blue, very thick},
lost/.style={densely dotted, ->, line width=1.5pt},
roundnode/.style={circle, draw,minimum size=1.5mm},
squarenode/.style={rectangle, draw,minimum size=2mm},
cross/.style={cross out,  fill=none, minimum size=2*(#1-\pgflinewidth), inner sep=0pt, outer sep=0pt}, cross/.default={1pt}]

\begin{scope}
\clip (5.8,0) ellipse (6.1cm and 2.3cm);
\draw[fshade] (-1,12) rectangle (12,0);
\end{scope}
\draw (5.8,0) ellipse (6.1cm and 2.3cm);

\draw [fill=white](4.2,0) ellipse (4.5cm and 1.9cm);

\begin{scope}
\clip (3.5,0) ellipse (3.8cm and 1.5cm);
\draw[fshade] (-1,10) rectangle (10,0);
\end{scope}
\draw (3.5,0) ellipse (3.8cm and 1.5cm);

\draw [fill=white](2.1,0) ellipse (2.4cm and 1.1cm);

\begin{scope}
\clip (1.2,0) ellipse (1.5cm and .7cm);
\draw[fshade] (-1,2) rectangle (3,0);
\end{scope}
\draw (1.2,0) ellipse (1.5cm and .7cm);


\node[draw=none](dot1) at (13,0)  {{\large $\cdots$}};
\node[draw=none](K1) at (2,-.3)   {$K_1$};
\node[draw=none](L1) at (4,-.3)   {$L_1$};
\node[draw=none](K2) at (6.8,-.3) {$K_2$};
\node[draw=none](L2) at (8.3,-.3) {$L_2$};
\node[draw=none](K3) at (11.5,-.3){$K_3$};

\coordinate (s) at (0,0);

\path[name path=firstrun] plot [smooth] coordinates {
    (s)
    (0.75,-.1)
    (1,0.8)
    (2.1,.75)
    (3,1.2)
    (3.8,.5)
    (6,2.1)
    (5.95,0.7)
    (8,2.5)
};
\begin{scope}
    \clip (s) rectangle (1,-0.5);
    \draw[bitzero,use path=firstrun];
\end{scope}
\begin{scope}
    \clip(s) rectangle (2.65,1.2);
    \draw[bitone,use path=firstrun];
\end{scope}
\begin{scope}
    \clip(2.65,-1.2) rectangle (5.54,2);
    \draw[bitzero,use path=firstrun];
\end{scope}
\begin{scope}
    \clip(5.54,2.5) rectangle (6.5,1.15);
    \draw[bitone,use path=firstrun];
\end{scope}
\begin{scope}
    \clip (5.9,1.15) -- (5,0) -- (7,0) -- (9,3) -- (8,3) --cycle;
    \draw[->,lost,use path=firstrun];
\end{scope}
\node[lost] at (8.25,2.75){$\pi_3$};

\draw[->,name path=secondrun,bitone] plot [smooth] coordinates {
    (s)
    (0.8,-.3)
    (2,.2)
    (3,-0.75)
    (4.5,0.6)
    (4.25,-0.25)
    (6,-2)
    (7,-0.75)
    (9,-1)
    (11,0.5)
    (7.7,.5)
    (10,2)
};
\begin{scope}
    \clip (0,0) -- (2,0) -- (1,-1.5) --cycle;
    \draw[bitzero,use path=secondrun];
\end{scope}
\begin{scope}
\clip(4.3,0.4) -- (4.25,1.5) -- (8,0) -- (10.9,0) -- (8,-2)-- (5,-2.1) -- (4,-0.5) --cycle;
    \draw[bitzero,use path=secondrun];
\end{scope}
\node[bitone] at (10.5,2.25){$\pi_2$};

\path [name path=thirdrun] plot [smooth] coordinates {
    (s)
    (0.7,-.5)
    (1.5,-0.5)
    (0.5,-2)
};
\begin{scope}
\clip (1.2,0) ellipse (1.5cm and .7cm);
    \draw[bitzero,use path=thirdrun];
\end{scope}
\begin{scope}
    \clip (0,-0.7) rectangle (2,-2);
    \draw[lost,->,use path=thirdrun];
\end{scope}
\node[lost] at (0.5,-2.25){$\pi_1$};

\node [roundnode,fill=white] at (s) {$I$};
\end{tikzpicture}
		\end{center}
                \caption{Memory updates along runs $\pi_1,\pi_2,\pi_3$,
                    drawn in blue while the memory-bit is one and in red while the bit is zero.
                    Both $\pi_1$ and $\pi_3$ violate $\varphi'$ and are drawn as dotted lines once they do.
}
\label{fig:flipingBit}
\end{figure}

Observe that if a run according to $\zstrat'$ exits some set $K_i$
(and thus enters $K_{i+1}\setminus K_i$) with the bit still set to $i \pmod 2$
(normal-mode) then this run has not visited $F_i$ and thus does not satisfy the objective
$\formula'$. (Or the same has happened earlier for some $j < i$, in which case
also the objective $\formula'$ is violated.)
An example is the run $\pi_1$ in \cref{fig:flipingBit}.

However, if a run according to $\zstrat'$ exits some set $K_i$
(and thus enters $K_{i+1} \setminus K_i$) with the bit set to $(i+1) \pmod 2$
(thus $\zstrat_{i+1}$ in next-mode)
then in the new set $K_{i'} \setminus K_{i'-1}$ with $i'=i+1$ the bit is set to
$i' \pmod 2$ and $\zstrat'$ continues to play like $\zstrat_{i+1}$ in normal-mode.
Even if this run returns (temporarily) to $K_i$ (but not to $K_{i-1}$)
the strategy $\zstrat'$ continues to play like $\zstrat_{i+1}$ in next-mode.
An example is the run $\pi_2$ in \cref{fig:flipingBit}.

Finally, if a run returns to $K_{i-1}$ after having visited $F_i$
then it fails the objective $\formula'$.
An example is the run $\pi_3$ in \cref{fig:flipingBit}.

\medskip
\noindent
\smallparg{The 1-bit strategy $\zstrat'$ is optimal for $\formula'$ from every
$\state \in \initstates$.}
In the following let $\state \in \initstates$ be an arbitrary initial state in $\initstates$.
For any run from $\state$, let $\firstinset{F_i}$ be the first state
$\state'$ in $F_i$ that is visited (if any).
We define a bounded reward objective $B_i'$ for runs starting
at $\state$ as follows. Every run that does not satisfy the objective
$R_{\le i}$ gets assigned reward $0$.
Otherwise, consider a run from $\state$ that satisfies $R_{\le i}$.
When this run reaches the set $F_i$ for the first time
in some state $\state'$ then this run gets a reward of
$\valueof{\mdp,R_{\ge i+1}}{\state'}$. Note that this reward is $\le 1$.

We show that for all $i \in \N$
\begin{equation}\label{eq:Bi-prime}
\valueof{\mdp,\formula'}{\state} = \valueof{\mdp,B_i'}{\state}
\end{equation}
Towards the $\ge$ inequality,
let $\hat{\zstrat}$ be an $\hat{\eps}$-optimal strategy for
$B_i'$ from $\state$.
We define the strategy $\hat{\zstrat}'$ to play like $\hat{\zstrat}$
until a state $\state' \in F_i$ is reached and then to switch to
some $\hat{\eps}$-optimal strategy for objective $R_{\ge i+1}$
from $\state'$.
Every run from $\state$ that satisfies $\formula'$ can be split into parts,
before and after the first visit to the set $F_i$, i.e.,
$\formula' = \{w_1\state'w_2\ |\ w_1\state' \in R_{\le i}, \state' \in F_i,
\state'w_2 \in R_{\ge i+1}\}$.
Therefore we obtain that
$\probm_{\mdp,\state,\hat{\zstrat}'}(\formula') \ge
\expectval_{\mdp,\state,\hat{\zstrat}}(B_i') - \hat{\eps} \ge
\valueof{\mdp,B_i'}{\state} - 2\hat{\eps}$.
Since this holds for every
$\hat{\eps} >0$, we obtain
$\valueof{\mdp,\formula'}{\state} \ge \valueof{\mdp,B_i'}{\state}$.

Towards the $\le$ inequality,
let $\hat{\zstrat}$ be any strategy for
$\formula'$ from $\state$. We have
$\probm_{\mdp,\state,\hat{\zstrat}}(\formula')
\le
\sum_{\state' \in F_i}
\probm_{\mdp,\state,\hat{\zstrat}}(R_{\le i} \cap \firstinset{F_i}=\state')
\cdot \valueof{\mdp,R_{\ge i+1}}{\state'}
=
\expectval_{\mdp,\state,\hat{\zstrat}}(B_i')
$.
Thus $\valueof{\mdp,\formula'}{\state} \le \valueof{\mdp,B_i'}{\state}$.
Together we obtain \cref{eq:Bi-prime}.

For all $i \in \N$ and every state $\state' \in F_i$ we show that
\begin{equation}\label{eq:Ri-eq-Bi}
\valueof{\mdp,R_{\ge i+1}}{\state'} = \valueof{\mdp,B_{i+1}}{\state'}
\end{equation}
Towards the $\ge$ inequality,
let $\hat{\zstrat}$ be an $\hat{\eps}$-optimal strategy for
$B_{i+1}$ from $\state' \in F_i$.
We define the strategy $\hat{\zstrat}'$ to play like $\hat{\zstrat}$
until a state $\state'' \in F_{i+1}$ is reached and then to switch to
some $\hat{\eps}$-optimal strategy for objective $R_{\ge i+2}$
from $\state''$. We have that
$\probm_{\mdp,\state',\hat{\zstrat}'}(R_{\ge i+1}) \ge
\expectval_{\mdp,\state',\hat{\zstrat}}(B_{i+1}) - \hat{\eps} \ge
\valueof{\mdp,B_{i+1}}{\state} - 2\hat{\eps}$.
Since this holds for every $\hat{\eps} >0$, we obtain
$\valueof{\mdp,R_{\ge i+1}}{\state'} \ge \valueof{\mdp,B_{i+1}}{\state'}$.

Towards the $\le$ inequality,
let $\hat{\zstrat}$ be any strategy for
$R_{\ge i+1}$ from $\state' \in F_i$.
We have
\begin{align*}
\probm_{\mdp,\state',\hat{\zstrat}}(R_{\ge i+1})
~&\le
\sum_{\state'' \in F_{i+1}}
\probm_{\mdp,\state',\hat{\zstrat}}(R_{i+1}\states^\omega \cap \firstinset{F_{i+1}}=\state'')
\cdot \valueof{\mdp,R_{\ge i+2}}{\state''}\\
 &=
\expectval_{\mdp,\state',\hat{\zstrat}}(B_{i+1}).
\end{align*}
Thus $\valueof{\mdp,R_{\ge i+1}}{\state'} \le \valueof{\mdp,B_{i+1}}{\state'}$.
Together we obtain \cref{eq:Ri-eq-Bi}.

We show, by induction on $i$, that $\zstrat'$ is optimal for $B_i'$ for
all $i \in \N$ from start state $\state$, i.e.,
\begin{equation}\label{eq:opt-Bi-prime}
\expectval_{\mdp,\state,\zstrat'}(B_i') = \valueof{\mdp,B_i'}{\state}
\end{equation}
In the base case of $i=1$ we have that $B_1' = B_1$. The strategy $\zstrat'$ plays
$\zstrat_1$ until reaching $F_1$, which is optimal for objective $B_1$ and
thus optimal for $B_1'$.
For the induction step we assume (IH) that $\zstrat'$ is optimal for $B_i'$.
\begin{align*}
\valueof{\mdp,B_{i+1}'}{\state}\ & =\ \valueof{\mdp,B_i'}{\state} && \text{by \cref{eq:Bi-prime}}\\
                           & =\ \expectval_{\mdp,\state,\zstrat'}(B_i') && \text{by (IH)}\\
& =\ \sum_{\state' \in F_i}
\probm_{\mdp,\state,\zstrat'}(R_{\le i} \cap \firstinset{F_i}=\state')
\cdot \valueof{\mdp,R_{\ge i+1}}{\state'} && \text{by def.\ of $B_i'$}\\
& =\ \sum_{\state' \in F_i}
\probm_{\mdp,\state,\zstrat'}(R_{\le i} \cap \firstinset{F_i}=\state')
\cdot \valueof{\mdp,B_{i+1}}{\state'} && \text{by \cref{eq:Ri-eq-Bi}}\\
& =\ \sum_{\state' \in F_i}
\probm_{\mdp,\state,\zstrat'}(R_{\le i} \cap \firstinset{F_i}=\state')
\cdot \expectval_{\mdp,\state',\zstrat_{i+1}}(B_{i+1}) && \text{opt.\ of $\zstrat_{i+1}$ for $B_{i+1}$}\\
& =\ \expectval_{\mdp,\state,\zstrat'}(B_{i+1}') && \text{by def.\ of
  $\zstrat'$ and $B_{i+1}'$}
\end{align*}
So $\zstrat'$ attains the value $\valueof{\mdp,B_{i+1}'}{\state}$ of the
objective $B_{i+1}'$ from $\state$ and is optimal. Thus \cref{eq:opt-Bi-prime}.

Now we show that $\zstrat'$ performs well on the objectives $R_{\le i}$ for
all $i \in \N$.
\begin{equation}\label{eq:1-bit-val}
\probm_{\mdp,\state,\zstrat'}(R_{\le i}) \ge \valueof{\mdp,\formula'}{\state}
\end{equation}
We have
\begin{align*}
\probm_{\mdp,\state,\zstrat'}(R_{\le i})\ &
\ge\ \expectval_{\mdp,\state,\zstrat'}(B_i') && \text{since $B_i'$ gives rewards
  $0$ for runs $\notin R_{\le i}$ and $\le 1$ otherwise} \\
  & =\ \valueof{\mdp,B_i'}{\state} && \text{by \cref{eq:opt-Bi-prime}}\\
  & = \  \valueof{\mdp,\formula'}{\state} && \text{by \cref{eq:Bi-prime}}
\end{align*}
So we get \cref{eq:1-bit-val}. Now we are ready to prove the optimality of $\zstrat'$ for $\formula'$ from $\state$.
\begin{align*}
\probm_{\mdp,\state,\zstrat'}(\formula')\ &
  =\ \probm_{\mdp,\state,\zstrat'}(\cap_{i \in \N} R_{\le i}) && \text{by def.\ of $\formula'$}\\
  & = \  \lim_{i \to \infty}\probm_{\mdp,\state,\zstrat'}(R_{\le i}) &&
\text{by continuity of measures from above}\\
  & \ge \ \lim_{i \to \infty}\valueof{\mdp,\formula'}{\state} && \text{by \cref{eq:1-bit-val}}\\
  & = \ \valueof{\mdp,\formula'}{\state}
\end{align*}
This concludes the proof that $\zstrat'$ is optimal for $\formula'$ and hence
$2\eps$-optimal for $\formula$ for every initial state
$\state \in \initstates$.

\medskip
\noindent
\smallparg{From finitely to infinitely branching MDPs.}
Let $\mdp$ be an infinitely branching MDP
with a finite set of initial states $\initstates$ and $\eps >0$. We derive a
finitely branching MDP $\mdp'$ with sufficiently similar behavior
wrt.\ our objective $\formula = \reset(F) \cap \transient$.
Every controlled state $x$ with infinite branching
$x \to y_i$ for all $i \in \N$ is replaced by a gadget
$x \to z_1, z_i \to z_{i+1}, z_i \to y_i$ for all $i \in \N$
with fresh controlled states $z_i$.
Infinitely branching random states with $x \step{p_i}{} y_i$ for all $i \in \N$
are replaced by a gadget
$x \step{1}{} z_1, z_i \step{1-p_i'} z_{i+1}, z_i \step{p_i'} y_i$ for all
$i \in \N$, with fresh random states $z_i$ and suitably adjusted probabilities
$p_i'$ to ensure that the gadget
is left at state $y_i$ with probability $p_i$, i.e.,
$p_i' = p_i/(\prod_{j=1}^{i-1}(1-p_j'))$.

We apply the above result for
finitely branching MDPs to $\mdp'$
and obtain a 1-bit deterministic $\eps$-optimal strategy $\zstrat'$
for our objective $\formula = \reset(F) \cap \transient$ from all states $\state \in \initstates$.
We construct a 1-bit deterministic $\eps$-optimal strategy $\zstrat''$
for $\mdp$ as follows.
Consider some state $x$ that is infinitely branching in $\mdp$
and its associated gadget in $\mdp'$.
Whenever a run in $\mdp'$ according to $\zstrat'$ reaches $x$ with some
memory value $\alpha \in \{0,1\}$ there exist
values $p_i$ for the probability that the gadget is left at state
$y_i$. Let $p \eqdef 1-\sum_{i\in \N} p_i$ be the probability that the gadget is
never left. (If $x$ is controlled then only one $p_i$ (or $p$) is nonzero,
since $\zstrat'$ is deterministic. If $x$ is random then $p=0$.)
Since $\zstrat'$ is deterministic, the memory updates are deterministic,
and thus there are values $\alpha_i' \in \{0,1\}$ such that whenever
the gadget is left at state $y_i$ the memory will be $\alpha_i'$.
We now define the behavior of the 1-bit deterministic strategy $\zstrat''$ at state $x$ with
memory $\alpha$ in $\mdp$.

If $x$ is controlled and $p\neq 1$ then $\zstrat''$ picks the successor state $y_i$ where
$p_i=1$ and sets the memory to $\alpha_i'$.
If $p=1$ then any run according to $\zstrat'$ that enters the gadget
does not satisfy the objective $\formula = \reset(F) \cap \transient$,
since the states in the gadget are disjoint from $F$. I.e., every run that
eventually stays in some gadget forever does not even satisfy $\reset(F)$, and
thus does not satisfy $\formula$.
Thus $\zstrat''$ performs at least as
well in $\mdp$ regardless of its choice, e.g., pick successor $y_1$ and
$\alpha' = \alpha$.

If $x$ is random then $p=0$ and the successor is chosen according to the defined
distribution (which is the same in $\mdp$ and $\mdp'$)
and $\zstrat''$ can only update its memory.
Whenever the successor $y_i$ is chosen, $\zstrat''$ updates the memory to
$\alpha_i'$.

In states that are not infinitely branching in $\mdp$, $\zstrat''$ does
exactly the same in $\mdp$ as $\zstrat'$ in $\mdp'$.

Since the gadgets do not intersect $F$, $\zstrat''$ performs
at least as well in $\mdp$ as $\zstrat'$ in $\mdp'$ and is thus
$\eps$-optimal from every $\state \in \initstates$.
\end{proof}

\begin{remark}
Note that the last step in the proof of \cref{thm:MDP-one-bit-Buchi},
lifting the result from finitely branching MDPs to infinitely branching MDPs,
does require this particular construction.
It cannot be shown by applying \cref{lem:reduction-finite-branch}.
The construction used for \cref{lem:reduction-finite-branch}
(i.e., Figure~\ref{fig:reduction-inf}) can only lift MD strategies,
but not deterministic 1-bit strategies.
The problem is that the construction in Figure~\ref{fig:reduction-inf}
introduces extra randomness and multiple paths to the same exit from the
ladder.
While an MD strategy on the finitely branching MDP $\mdp'$ induces
a corresponding MD strategy on the
infinitely branching MDP~$\mdp$, the same does not hold for
deterministic 1-bit strategies.
In contrast, the different construction in the last part of the proof
of \cref{thm:MDP-one-bit-Buchi}
preserves deterministic 1-bit strategies, but works only for the
$\reset(F) \cap \transient$ objective, not for $\transient$ alone.
\end{remark}

\section{Missing Proofs from \cref{transientMD}} \label{app-transientMD}
We prove \cref{thm:Ornstein-plastering} from the main body:
\thmOrnsteinplastering*

\subsection{Proof of Item~1 of \cref{thm:Ornstein-plastering}}

\begin{proof}
We follow Ornstein's proof~\cite{Ornstein:AMS1969} as presented in \cite{KMSTW2020}.
Recall that an MD strategy~$\sigma$ can be viewed as a function $\sigma : \zstates \to S$ such that for all $s \in \zstates$, the state~$\sigma(s)$ is a successor state of~$s$.
Starting from the original MDP~$\mdp$ we successively \emph{fix} more and more controlled states, by which we mean select an outgoing transition and remove all others.
While this is in general an infinite (but countable) process, it defines an MD strategy in the limit.
Visually, we ``plaster'' the whole state space by the fixings.

Put the states in some order, i.e., $s_1, s_2, \ldots$ with $S = \{s_1, s_2, \ldots\}$.
The plastering proceeds in \emph{rounds}, one round for every state.
Let $\mdp_i$ be the MDP obtained from~$\mdp$ after the fixings of the first $i-1$ rounds (with $\mdp_1 = \mdp$).
In round~$i$ we fix controlled states in such a way that
\begin{enumerate}
\item[(A)]
the probability, starting from~$s_i$, of~$\formula$ using only random and \emph{fixed} controlled states is not much less than the value $\valueof{\mdp_i}{s_i}$; and
\item[(B)] for all states~$s$, the value $\valueof{\mdp_{i+1}}{s}$ is almost as high as $\valueof{\mdp_{i}}{s}$.
\end{enumerate}
The purpose of goal~(A) is to guarantee good progress towards~$\formula$ when starting from~$s_i$.
The purpose of goal~(B) is to avoid fixings that would cause damage to the values of other states.
 

Now we describe round~$i$.
Consider the MDP~$\mdp_i$ after the fixings from the first $i-1$~rounds, and let $\varepsilon_i > 0$.
Recall that we wish to fix a part of the state space so that $s_i$ has a high probability of~$\formula$ using only random and fixed controlled states.
By assumption there is an MD strategy~$\sigma$ such that $\probm_{\mdp_i, s_i,\sigma}(\formula) \ge \valueof{\mdp_i}{s_i} - \varepsilon_i^2$.
Fixing~$\sigma$ everywhere would accomplish goal~(A), but potentially compromise goal~(B).
So instead we are going to fix~$\sigma$ only for states where $\sigma$~does well: define
\[
G \ \eqdef\ \{s \in S \mid \probm_{\mdp_i, s,\sigma}(\formula) \ge \valueof{\mdp_i}{s} - \varepsilon_i\}
\]
and obtain~$\mdp_{i+1}$ from~$\mdp_i$ by fixing~$\sigma$ on~$G$.
(Note that $\sigma$ does not ``contradict'' earlier fixings, because in the MDP~$\mdp_i$ the previously fixed states have only one outgoing transition left.)
%

We have to check that with this fixing we accomplish the two goals above.
Indeed, we accomplish goal~(A): by its definition strategy~$\sigma$ is $\varepsilon_i^2$-optimal from~$s_i$, so the probability of ever entering~$S\setminus G$ (where $\sigma$ is less than $\varepsilon_i$-optimal) cannot be large:
\begin{equation} \label{eq-goalA}
\probm_{\mdp_i, s_i,\sigma}(\reach{S \setminus G}) \ \le\ \varepsilon_i
\end{equation}
In slightly more detail, this inequality holds because the probability that the $\varepsilon_i^2$-optimal strategy~$\sigma$ enters a state whose value is underachieved by~$\sigma$ by at least~$\varepsilon_i$ can be at most~$\varepsilon_i$.
We give a detailed proof of \eqref{eq-goalA} in \cref{lem:app-proof-goalA} below.
It follows from the $\varepsilon_i^2$-optimality of~$\sigma$ and from~\eqref{eq-goalA} that we have $\probm_{\mdp_i, s_i,\sigma}(\formula \land \neg\reach{S \setminus G}) \ge \valueof{\mdp_i}{s_i} - \varepsilon_i - \varepsilon_i^2$.
So in~$\mdp_{i+1}$ we obtain for \emph{all} strategies~$\sigma'$:
\begin{equation} \label{eq-goalAA}
\probm_{\mdp_{i+1}, s_i,\sigma'}(\formula) \ \ge \ \valueof{\mdp_i}{s_i} - \varepsilon_i - \varepsilon_i^2 
\end{equation}

We also accomplish goal~(B): the difference between $\mdp_i$ and~$\mdp_{i+1}$ is that $\sigma$ is fixed on~$G$, but $\sigma$ performs well from $G$ on.
So we obtain for \emph{all} states~$s$:
\begin{equation} \label{eq-goalB}
\valueof{\mdp_{i+1}}{s} \ \ge \ \valueof{\mdp_{i}}{s} - \varepsilon_i
\end{equation}
In slightly more detail, this inequality holds because any strategy in~$\mdp_i$ can be transformed into a strategy in~$\mdp_{i+1}$, with the difference that once the newly fixed part $G$ is entered, the strategy switches to the strategy~$\sigma$, which (by the definition of~$\mdp_{i+1}$) is consistent with the fixing and (by the definition of~$G$) is $\varepsilon_i$-optimal from there.
We give a detailed proof of \eqref{eq-goalB} in \cref{lem:app-proof-goalB} below.
This completes the description of round~$i$.

Let $\varepsilon \in (0,1)$, and for all $i \ge 1$, choose $\varepsilon_i \eqdef \frac{\varepsilon}{2} \cdot 2^{-i}$.
Let $\sigma$ be an arbitrary MD strategy that is compatible with all fixings.
(This strategy~$\sigma$ is actually unique.)
It follows that $\sigma$ is playable in all~$\mdp_i$.
We have for all $i \ge 1$:
\begin{align*}
\probm_{\mdp, s_i,\sigma}(\formula)
\ &\ge \ \valueof{\mdp_i}{s_i} - \varepsilon_i - \varepsilon_i^2 && \text{by~\eqref{eq-goalAA}} \\
\ &\ge \ \valueof{\mdp_i}{s_i} - 2 \varepsilon_i && \text{as $\varepsilon_i < 1$} \\
\ &\ge \ \valueof{\mdp_i}{s_i} - \frac{\varepsilon}{2} && \text{choice of~$\varepsilon_i$} \\
\ &\ge \ \valueof{\mdp}{s_i} - \sum_{j=1}^{i-1} \varepsilon_j - \frac{\varepsilon}{2} && \text{by~\eqref{eq-goalB}} \\
\ &\ge \ \valueof{\mdp}{s_i} - \varepsilon && \text{choice of~$\varepsilon_j$}
\end{align*}
Thus, the MD strategy~$\sigma$ is $\varepsilon$-optimal for all states.
\end{proof}

\begin{lemma} \label{lem:app-proof-goalA}
\cref{eq-goalA} holds.
\end{lemma}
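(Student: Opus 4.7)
The plan is to exhibit, under the strategy~$\sigma$, a non-negative bounded supermartingale whose initial value is at most $\varepsilon_i^2$ yet whose value strictly exceeds $\varepsilon_i$ the first time the process enters $S\setminus G$; a Markov-type inequality will then yield the claimed probability bound.

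Concretely, I will write $V(s)\eqdef\valueof{\mdp_i,\formula}{s}$ and $U(s)\eqdef\probm_{\mdp_i,s,\sigma}(\formula)$, and consider the deficit $d\eqdef V-U\ge 0$. Because $\sigma$ is MD and $\formula$ is tail in $\mdp_i$, a one-step decomposition shows that under the Markov chain induced by $\sigma$ the process $(U(s_n))_n$ is a martingale. A standard Bellman argument (using $\varepsilon$-optimal successor strategies, which exists by tailness) gives $V(s)=\sum_{s'}P(s,s')V(s')$ at random states, while at controlled states $V(s)=\sup_{s\transition s'}V(s')\ge V(\sigma(s))$ is immediate from the definition of $V$ as a supremum. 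Hence $(V(s_n))_n$ is a supermartingale under $\sigma$, so $(d(s_n))_n$ is a non-negative $[0,1]$-valued supermartingale starting at $d(s_i)=V(s_i)-U(s_i)\le \varepsilon_i^2$ by the hypothesis on $\sigma$.

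Let $\tau$ be the first time the $\sigma$-induced chain from $s_i$ enters $S\setminus G$, so $\{\tau<\infty\}$ coincides with the event $\reach{S\setminus G}$. Applying optional stopping to $(d(s_{\tau\wedge n}))_n$ and passing to $n\to\infty$ via Fatou's lemma (since $d$ is non-negative and bounded) will give
\[
\expectation[\mdp_i,s_i,\sigma]\bigl[d(s_\tau)\cdot\mathbf{1}_{\{\tau<\infty\}}\bigr]\;\le\;d(s_i)\;\le\;\varepsilon_i^2.
\]
By the very definition of $G$, on $\{\tau<\infty\}$ we have $d(s_\tau)>\varepsilon_i$, so the left-hand side is strictly greater than $\varepsilon_i\cdot\probm_{\mdp_i,s_i,\sigma}(\reach{S\setminus G})$; this yields $\probm_{\mdp_i,s_i,\sigma}(\reach{S\setminus G})\le\varepsilon_i$, as required.

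The only genuine subtlety will be justifying the optional-stopping step when $\tau$ can be infinite; however, this is routine given that $d$ is non-negative and uniformly bounded by~$1$, so Fatou handles the contribution of $\{\tau=\infty\}$ harmlessly. The Bellman identity for $V$ at random states, even for infinitely branching $\mdp_i$, is the other place where tailness of~$\formula$ is used essentially, but a standard $\varepsilon$-optimal selection argument suffices there.
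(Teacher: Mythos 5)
Your proof is correct and is essentially the paper's argument recast in martingale language: the paper's two key displayed inequalities are precisely the stopped versions of your martingale property for $U$ and supermartingale property for $V$, and both arguments conclude by observing that the deficit $d=V-U$ exceeds $\varepsilon_i$ on first entry to $S\setminus G$ while its expected value at that entry time is at most $d(s_i)\le\varepsilon_i^2$. The one-step (super)martingale properties you establish (using tailness of $\formula$ and memorylessness of $\sigma$, including at random states via $\varepsilon$-optimal successor strategies) are sound, and the optional-stopping/Fatou step is routine as you say.
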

\begin{proof}
For a state $s \in S \setminus G$, define the event $L_s$ as the set of runs that leave~$G$ such that $s$ is the first visited state in~$S \setminus G$.
Then we have:
\begin{equation*} \label{eq-goalA-0}
\probm_{\mdp_i, s_i,\sigma}(\reach{S \setminus G}) \ =\ \sum_{s \in S \setminus G} \probm_{\mdp_i, s_i,\sigma}(L_s)
\end{equation*}
Since $\formula$ is tail and using the Markov property:
\begin{equation*} \label{eq-goalA-1}
\begin{aligned} 
\probm_{\mdp_i, s_i,\sigma}(\formula)
\ &=\ \probm_{\mdp_i, s_i,\sigma}(\neg\reach{S \setminus G} \land \formula) \ + \\
  &\qquad \sum_{s \in S \setminus G} \probm_{\mdp_i, s_i,\sigma}(L_s) \cdot \probm_{\mdp_i, s,\sigma}(\formula)
\end{aligned}
\end{equation*}
By the definition of~$G$ it follows:
\begin{equation} \label{eq-goalA-2}
\begin{aligned}
\probm_{\mdp_i, s_i,\sigma}(\formula)
\  &\le\ \probm_{\mdp_i, s_i,\sigma}(\neg\reach{S \setminus G} \land \formula) \ + \\
   &\qquad \sum_{s \in S \setminus G} \probm_{\mdp_i, s_i,\sigma}(L_s) \cdot (\valueof{\mdp_i}{s} - \varepsilon_i) 
\end{aligned}
\end{equation}
On the other hand, $\sigma$ is $\varepsilon_i^2$-optimal for~$s_i$, hence:
\begin{equation} \label{eq-goalA-3}
\begin{aligned}
\probm_{\mdp_i, s_i,\sigma}(\formula)
\ &\ge\ -\varepsilon_i^2 + \valueof{\mdp_i}{s} \\
\ &\ge\ -\varepsilon_i^2 + \probm_{\mdp_i, s_i,\sigma}(\formula \land \neg\reach{S \setminus G}) \ + \\
  &\qquad \sum_{s \in S \setminus G} \probm_{\mdp_i, s_i,\sigma}(L_s) \cdot \valueof{\mdp_i}{s}
\end{aligned}
\end{equation}
By combining \eqref{eq-goalA-2} and~\eqref{eq-goalA-3} we obtain:
\[
\varepsilon_i^2 \ \ge\ \varepsilon_i \cdot \sum_{s \in S \setminus G} \probm_{\mdp_i, s_i,\sigma}(L_s)\ = \ \varepsilon_i \cdot \probm_{\mdp_i, s_i,\sigma}(\reach{S \setminus G}) \tag*{\qedhere}
\]
\end{proof}

\begin{lemma} \label{lem:app-proof-goalB}
\cref{eq-goalB} holds.
\end{lemma}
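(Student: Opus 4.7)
The plan is to show \cref{eq-goalB} by taking any near-optimal strategy in~$\mdp_i$ from state~$s$, and transforming it into a strategy in~$\mdp_{i+1}$ that respects the new fixings on~$G$ by switching to~$\sigma$ upon the first entry to~$G$. Concretely, fix $\delta > 0$ and pick a strategy $\tau$ in~$\mdp_i$ with $\probm_{\mdp_i, s, \tau}(\formula) \ge \valueof{\mdp_i}{s} - \delta$. Define a strategy $\tau'$ playable in~$\mdp_{i+1}$ that mimics $\tau$ as long as no state in~$G$ has been visited, and upon the first visit to some $t \in G$ switches to $\sigma$ from then on. Since $\sigma$ is MD and the fixings in~$\mdp_{i+1}$ exactly record $\sigma$ on~$G$, the strategy $\tau'$ is consistent with those fixings. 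Outside of~$G$ the two MDPs agree, so $\tau'$ is well defined in~$\mdp_{i+1}$.

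Next, partition runs by whether they ever enter~$G$, and if so by the first visited state $t \in G$. Let $L_t$ denote the event that $t$ is the first state in~$G$ visited. By the Markov property and the tail property of~$\formula$,
\begin{align*}
\probm_{\mdp_{i+1}, s, \tau'}(\formula)
\ =\ &\probm_{\mdp_{i+1}, s, \tau'}(\neg\reach{G} \land \formula)
\ + \sum_{t \in G} \probm_{\mdp_{i+1}, s, \tau'}(L_t) \cdot \probm_{\mdp_i, t, \sigma}(\formula)\\
\ \ge\ &\probm_{\mdp_{i}, s, \tau}(\neg\reach{G} \land \formula)
\ + \sum_{t \in G} \probm_{\mdp_i, s, \tau}(L_t) \cdot (\valueof{\mdp_i}{t} - \varepsilon_i),
\end{align*}
where in the first summand we used that before entering~$G$ the behaviour of $\tau'$ in~$\mdp_{i+1}$ coincides with that of $\tau$ in~$\mdp_i$, and in the second summand we used the definition of~$G$.

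On the other hand, applying the same decomposition to $\tau$ in~$\mdp_i$, and using $\probm_{\mdp_i, t, \tau}(\formula) \le \valueof{\mdp_i}{t}$, we get
\begin{equation*}
\valueof{\mdp_i}{s} - \delta
\ \le\ \probm_{\mdp_i, s, \tau}(\formula)
\ \le\ \probm_{\mdp_{i}, s, \tau}(\neg\reach{G} \land \formula)
\ + \sum_{t \in G} \probm_{\mdp_i, s, \tau}(L_t) \cdot \valueof{\mdp_i}{t}.
\end{equation*}
Subtracting the two inequalities and using $\sum_{t \in G} \probm_{\mdp_i, s, \tau}(L_t) \le 1$ yields
\begin{equation*}
\probm_{\mdp_{i+1}, s, \tau'}(\formula) \ \ge\ \valueof{\mdp_i}{s} - \delta - \varepsilon_i,
\end{equation*}
hence $\valueof{\mdp_{i+1}}{s} \ge \valueof{\mdp_i}{s} - \delta - \varepsilon_i$. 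Letting $\delta \to 0$ gives \cref{eq-goalB}.

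The only delicate point is verifying that $\tau'$ is a legitimate strategy in~$\mdp_{i+1}$: this is where one must check that $\sigma$ respects all prior fixings (which it does, since those earlier fixings left a unique outgoing transition and $\sigma$ is playable in~$\mdp_i$), and that the two probability measures truly agree on cylinders that avoid~$G$ (since outside of~$G$ the transition structures of $\mdp_i$ and $\mdp_{i+1}$ coincide). Once these consistency checks are in place, the tail property of~$\formula$ does the rest.
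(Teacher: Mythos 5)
Your proposal is correct and follows essentially the same route as the paper: the same transformation (mimic the given strategy until the first entry into $G$, then switch to the MD strategy $\sigma$) and the same decomposition of $\formula$ by the first state of $G$ visited, using the Markov and tail properties. The only cosmetic difference is that the paper proves the pointwise bound $\probm_{\mdp_{i+1},s,\sigma_{i+1}}(\formula) \ge \probm_{\mdp_{i},s,\sigma_{i}}(\formula) - \varepsilon_i$ for an \emph{arbitrary} strategy $\sigma_i$ (using $\valueof{\mdp_i}{s'} \ge \probm_{\mdp_i,s',\sigma_i}(\formula)$ inside the sum) and then takes suprema, whereas you start from a $\delta$-optimal strategy, subtract two decompositions, and let $\delta \to 0$ --- both are valid.
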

\begin{proof}
For a state $s' \in G$, define the event $E_{s'}$ as the set of runs that enter~$G$ such that $s'$ is the first visited state in~$G$.
Fix any state $s \in S$ and any strategy~$\sigma_i$ in~$\mdp_i$.
We transform~$\sigma_i$ into a strategy~$\sigma_{i+1}$ in~$\mdp_{i+1}$ such that $\sigma_{i+1}$ behaves like~$\sigma_i$ until $G$ is entered, at which point $\sigma_{i+1}$ switches to the MD strategy~$\sigma$, which we recall is compatible with~$\mdp_{i+1}$ and is $\varepsilon_i$-optimal from~$G$ in~$\mdp_i$.
To show~\eqref{eq-goalB} it suffices to show that $\probm_{\mdp_{i+1},s,\sigma_{i+1}}(\formula) \ge \probm_{\mdp_{i},s,\sigma_{i}}(\formula) - \varepsilon_i$.
We have:
\begin{align*}
\probm_{\mdp_{i+1},s,\sigma_{i+1}}(\formula)
\ &=\ \probm_{\mdp_{i+1},s,\sigma_{i+1}}(\neg \reach{G} \land \formula) \ + &&\text{$\formula$ is tail}\\
  &\qquad \sum_{s' \in G} \probm_{\mdp_{i+1}, s,\sigma_{i+1}}(E_{s'}) \cdot \probm_{\mdp_{i+1}, s',\sigma_{i+1}}(\formula) &&\text{Markov property} \\
\ &=\ \probm_{\mdp_{i},s,\sigma_{i}}(\neg \reach{G} \land \formula) \ + && \text{using def.\ of~$\sigma_{i+1}$} \\
  &\qquad \sum_{s' \in G} \probm_{\mdp_{i}, s,\sigma_{i}}(E_{s'}) \cdot \probm_{\mdp_{i}, s',\sigma}(\formula)
\intertext{Further we have for all $s' \in G$:}
\probm_{\mdp_{i}, s',\sigma}(\formula)
\ &\ge\ \valueof{\mdp_{i}}{s'} - \varepsilon_i && \text{as $s' \in G$} \\
\ &\ge\ \probm_{\mdp_{i}, s',\sigma_i}(\formula) - \varepsilon_i
\intertext{Plugging this in above, we obtain:}
\probm_{\mdp_{i+1},s,\sigma_{i+1}}(\formula)
\ &\ge\ \probm_{\mdp_{i},s,\sigma_{i}}(\neg \reach{G} \land \formula) \ + \\
  &\qquad \sum_{s' \in G} \probm_{\mdp_{i}, s,\sigma_{i}}(E_{s'}) \cdot (\probm_{\mdp_{i}, s',\sigma_i}(\formula) - \varepsilon_i) \\
\ &\ge\ \probm_{\mdp_{i},s,\sigma_{i}}(\neg \reach{G} \land \formula) \ + \\
  &\qquad \Big(\sum_{s' \in G} \probm_{\mdp_{i}, s,\sigma_{i}}(E_{s'}) \cdot \probm_{\mdp_{i}, s',\sigma_i}(\formula)\Big) - \varepsilon_i \\
\ &=\ \probm_{\mdp_{i},s,\sigma_{i}}(\formula) - \varepsilon_i \tag*{\qedhere}
\end{align*}
\end{proof}

\subsection{Proof of Item~2 of \cref{thm:Ornstein-plastering}}
\begin{proof}
As discussed in \cref{sec:conditioned}, in \cite[Lemma~6]{KMSW2017} there is a construction of a certain \emph{conditioned version} of~$\mdp$ (similar to~$\pmdp$ from \cref{def:conditionedmdp}), say $\mdp_+$.
The construction is such that $\formula$ is tail also in~$\mdp_+$.
By \cite[Lemma~6, item~2]{KMSW2017} it suffices to exhibit a single MD strategy in~$\mdp_+$ that is \emph{almost surely winning} from all states that have an \emph{almost surely winning} strategy.

Obtain from~$\mdp_+$ an MDP~$\mdp'$ by restricting the state space to those states that have an almost surely winning strategy, and eliminating all transitions leaving these states.
In $\mdp'$ all states have an almost surely winning strategy, as an almost surely winning strategy may never enter a state that does not have an almost surely winning strategy (using the fact that $\formula$ is tail).
Let $\zstrat$ be a uniform $\frac12$-optimal MD strategy (in~$\mdp'$), which exists by item~1.
It suffices to show that $\zstrat$ is (in~$\mdp'$) almost surely winning from all states that have an almost surely winning strategy.

We follow the argument from \cite[Theorem~6]{KMSTW2020}.
We have $\probm_{\mdp',s,\sigma}(\formula) \ge \frac12$ for all states~$s$.
Thus, for any run $s_0 s_1 \cdots$ in~$\mdp'$ we have $\probm_{\mdp',s_i,\sigma}(\neg\formula) \le \frac12$ for all~$i$; in particular, the sequence $\left(\probm_{\mdp',s_i,\sigma}(\neg\formula)\right)_i$ does not converge to~$1$.
As a consequence of L\'evy's zero-one law, since $\neg\formula$ is tail, the events $\neg\formula$ and $\left\{ s_0 s_1 \cdots \;\middle\vert\; \lim_{i \to \infty} \probm_{\mdp',s_i,\sigma}(\neg\formula) = 1 \right\}$ are equal up to a null set.
Thus, for all states~$s$ we have $\probm_{\mdp',s,\sigma}(\neg\formula) = 0$; hence, $\probm_{\mdp',s,\sigma}(\formula) = 1$.
\end{proof}

\section{Missing Proofs from \cref{sec:parity}} \label{app-parity}
\thmepsoptimalsafety*
\begin{proof}
    Let $\mdp=\mdptuple$ be a universally transient MDP and $\eps>0$.
Assume w.l.o.g.\ that the target $\reachset\subseteq\states$
of the objective $\formula=\safety{T}$
is a (losing) sink
and let $\iota:\states\to\N$ be an enumeration of the state space $\states$.

By \cref{lem:structural-transience}(3),
for every state $\state$ we have
$\revisit{\state} \eqdef
\sup_{\zstrat}\probm_{\mdp,\state,\zstrat}(\next\eventually(\state)) < 1$
and thus
$\NuOfRevisits{\state} \eqdef \sum_{i=0}^\infty \revisit{\state}^i < \infty$.
This means that, independent of the chosen strategy,
$\revisit{\state}$ upper-bounds the chance to return to $\state$,
and $\NuOfRevisits{\state}$ bounds the expected number of visits to $\state$.

Suppose that $\zstrat$ is an MD strategy which, at any state $\state\in\zstates$, picks a
successor $\state'$ with
$$\valueof{}{\state'}\quad\ge\quad \valueof{}{\state} -
\frac{\eps}{2^{\iota(\state)+1} \cdot \NuOfRevisits{\state}}.$$
This is possible even if $\mdp$ is infinitely branching, by the definition of
value and the fact that $\NuOfRevisits{\state} < \infty$.
We show that 
$\probm_{\mdp,\state_0,\zstrat}(\safety{\reachset}) \ge \valueof{}{\state_0} -\eps$
holds for every initial state $\state_0$, 
which implies the claim of the theorem.

Towards this, we define a function $\cost$ that labels each transition in the MDP with a real-valued cost:
For every controlled transition $\state \transition \state'$ let
$\cost((\state,\state')) \eqdef \valueof{}{\state} - \valueof{}{\state'} \ge
0$.
Random transitions have cost zero.
We will argue that when playing $\zstrat$ from any start state $\state_0$, its
attainment w.r.t.\ the objective $\safety{\reachset}$
equals the value of $\state_0$ minus the expected total cost,
and that this cost is bounded by $\eps$.

For any $i\in\N$ let us write $s_i$ for the random variable denoting the state just after step $i$,
and $\costRV(i)\eqdef \cost(\state_{i},\state_{i+1})$ for the cost of step $i$ in a random run.
%
We now show that under $\zstrat$ the expected total cost is bounded in the
limit, i.e.,
\begin{equation}\label{eq:limcost}
    \lim_{n \to \infty} \expectation\left(\sum_{i=0}^{n-1}\costRV(i)\right) \le \eps.
\end{equation}
To show this, let us decompose the cost function as $\cost = \sum_\state \cost_\state$
where $\cost_\state$ is a local cost function for state $\state$ that assigns
$\valueof{}{\state} - \valueof{}{\state'}$ to all controlled transitions 
$\state \transition \state'$ \emph{starting in $\state$} and zero otherwise.
Similarly, we let $\costRV(n) \eqdef \sum_\state \costRV_\state(n)$,
where $\costRV_\state(n)$ is the random variable
denoting the cost incurred on in step $n$ from $\state$.
We thus have 
\begin{equation*}
\lim_{n \to \infty} \expectation\left(\sum_{i=0}^{n-1}\costRV(i)\right)
=
\lim_{n \to \infty} \expectation\left(\sum_{\state\in\states} \sum_{i=0}^{n-1}\costRV_s(i)\right)
=
\sum_{\state\in\states} \lim_{n \to \infty} \expectation\left(\sum_{i=0}^{n-1}\costRV_s(i)\right)
\end{equation*}
where the last equality holds by convergence of monotone series.

We now show an upper bound on $\lim_{n \to \infty} \expectation\left(\sum_{i=0}^{n-1}\costRV_\state(i)\right)$
for some fixed state $\state$.
Costs are only incurred at state $\state$, and each time they are upper-bounded by 
$\frac{\eps}{2^{\iota(\state)+1} \cdot \NuOfRevisits{\state}}$. Moreover, the probability of
returning from $\state$ to $\state$ is upper-bounded by $\revisit{\state}$. This means that
$   %
\lim_{n \to \infty} \expectation\left(\sum_{i=0}^{n-1}\costRV_\state(i)\right)
\le
\sum_{i=0}^\infty \revisit{\state}^i \frac{\eps}{2^{\iota(\state)+1} \cdot \NuOfRevisits{\state}}
=
\frac{\eps}{2^{\iota(\state)+1}}
$,
which in turn implies Eq.~\eqref{eq:limcost} as then
$\lim_{n \to \infty} \expectation\left(\sum_{i=0}^{n-1}\costRV(i)\right) =
\sum_{\state\in\states} \lim_{n \to \infty} \expectation\left(\sum_{i=0}^{n-1}\costRV_\state(i)\right)
\le \sum_\state \frac{\eps}{2^{\iota(\state)+1}} = \eps$.

Next, we show that for every $n$,
\begin{equation}\label{eq:cost}
    \expectation(\valueof{}{s_n}) = \expectation(\valueof{}{s_0}) - \expectation\left(\sum_{i=0}^{n-1}\costRV(i)\right).
\end{equation}
By induction on $n$ where the base case $n=0$ trivially holds.
For the induction step,
\begin{align*}
\expectation(\valueof{}{\state_{n+1}})
\quad&=\quad \expectation(\valueof{}{\state_{n}} + \valueof{}{\state_{n+1}} - \valueof{}{\state_{n}})\\
\quad&=\quad \expectation(\valueof{}{\state_{n}}) + \expectation(\valueof{}{\state_{n+1}} - \valueof{}{\state_{n}})\\
\quad&=\quad \expectation(\valueof{}{\state_{n}}) + \probm(s_n\in \rstates)\expectation(\valueof{}{\state_{n+1}} - \valueof{}{\state_{n}} \mid s_n \in \rstates)\\
     &\qquad\qquad\qquad~~  + \probm(s_n\in \zstates)\expectation(\valueof{}{\state_{n+1}} - \valueof{}{\state_{n}} \mid s_n \in \zstates)\\
\quad&=\quad \expectation(\valueof{}{\state_{n}}) + 0 - \probm(s_n\in \zstates)\expectation(\costRV({n}) \mid s_n \in \zstates)\\
\quad&=\quad \expectation(\valueof{}{\state_{n}}) - \probm(s_n\in \rstates)\expectation(\costRV({n}) \mid s_n \in \rstates) \\ 
     &\qquad\qquad\qquad~~  - \probm(s_n\in \zstates)\expectation(\costRV({n}) \mid s_n \in \zstates)\\
\quad&=\quad \expectation(\valueof{}{\state_{n}}) - \expectation(\costRV({n}))\\
\quad&=\quad \expectation(\valueof{}{\state_{0}}) - \sum_{i=0}^{n-1} \expectation(\costRV({i})) - \expectation(\costRV({n}))\\
\quad&=\quad \expectation(\valueof{}{\state_{0}}) - \expectation\left(\sum_{i=0}^{n} \costRV({i})\right).
\end{align*}

From \cref{eq:limcost,eq:cost} 
we get
    \begin{equation}
        \label{eq:exi-lim}
        \liminf_{n\to\infty} \expectation(\valueof{}{\state_n})
        =
    \valueof{}{s_0} - \lim_{n \to \infty} \expectation\left(\sum_{i=0}^{n-1}\cost(i)\right)
        \ge \valueof{}{\state_0} - \eps.
    \end{equation}
Finally, to show the claim let $[\state_n\notin\reachset] : \states^\omega \to \{0,1\}$ be the random variable that indicates that the $n$-th state is not in the target set $\reachset$.
Note that $[\state_n\notin\reachset] \ge \valueof{}{\state_n}$ because target states have value $0$.
We have:
\begin{align*}
    \probm_{\mdp,s_0,\sigma}(\safety{\reachset})
 \quad&=\quad\probm_{\mdp,s_0,\sigma}\left(\bigwedge_{i=0}^\infty{\next^i \neg \reachset}\right)
 && \text{semantics of~$\safety{\reachset}=\always\neg \reachset$} \\
 & =\quad\smashoperator{\lim_{n\to\infty}} \probm_{\mdp,s_0,\sigma}\left(\bigwedge_{i=0}^n \next^i \neg \reachset \right)
 && \text{continuity of measures} \\
 & =\quad \smashoperator{\lim_{n\to\infty}} \probm_{\mdp,s_0,\sigma}(\next^n \neg \reachset)
 && \text{$\reachset$ is a sink} \\
 & =\quad\smashoperator{\lim_{n\to\infty}} \expectation([\state_n\notin\reachset])
 && \text{definition of $[\state_n\notin\reachset]$} \\
& \ge\quad \liminf_{n\to\infty} \expectation(\valueof{}{\state_n})
 && \text{as $[\state_n\notin\reachset] \ge \valueof{}{\state_n}$}\\
 & \ge\quad \valueof{}{\state_0}-\eps
 && \text{\cref{eq:exi-lim}.}
\qedhere
\end{align*}
\end{proof}

\section{Missing Proofs from \cref{sec:conditioned}} \label{app-conditioned}
We prove \cref{lem:conditioned-construction} from the main body:

\lemconditionedconstruction*

\begin{proof}
We prove the equality in item~1 by induction on~$n$.
For $n=0$ it is trivial.
For the step, suppose the equality holds for some~$n$.
Let $s_0 s_1 \cdots s_n \in s_0 \pstates^*$ be a partial run in~$\pmdp$ with $s_n \in \states$.

Let $s_n \in \zstates$ and $s_{n+1} \in \pstates \cap \states$.
We have:
\begin{align*}
& \valueof{\mdp}{s_0} \cdot \probm_{\pmdp,s_0,\zstrat}(s_0 s_1 \cdots s_n (s_n,s_{n+1}) s_{n+1} \pstates^\omega) \\
& = \valueof{\mdp}{s_0} \cdot \probm_{\pmdp,s_0,\zstrat}(s_0 s_1 \cdots s_n \pstates^\omega) \cdot \zstrat(s_0 s_1 \ldots s_n)((s_n,s_{n+1})) \cdot \frac{\valueof{\mdp}{s_{n+1}}}{\valueof{\mdp}{s_n}} && \text{def.~of~$\pprobp$} \\
& = \probm_{\mdp,s_0,\zstrat}(\overline{s_0 s_1 \cdots s_n} \states^\omega) \cdot \zstrat(s_0 s_1 \ldots s_n)((s_n,s_{n+1})) \cdot \valueof{\mdp}{s_{n+1}} && \text{ind.\ hyp.} \\
& = \probm_{\mdp,s_0,\zstrat}(\overline{s_0 s_1 \cdots s_n} \states^\omega) \cdot \zstrat(\overline{s_0 s_1 \ldots s_n})(s_{n+1}) \cdot \valueof{\mdp}{s_{n+1}} && \text{$\zstrat$ in~$\mdp$} \\
& = \probm_{\mdp,s_0,\zstrat}(\overline{s_0 s_1 \cdots s_n (s_n s_{n+1}) s_{n+1}} \states^\omega) \cdot \valueof{\mdp}{s_{n+1}}
\end{align*}

Let $s_n \in \rstates$ and $s_{n+1} \in \pstates \cap \states$.
We have:
\begin{align*}
& \valueof{\mdp}{s_0} \cdot \probm_{\pmdp,s_0,\zstrat}(s_0 s_1 \cdots s_n s_{n+1} \pstates^\omega) \\
& =\ \valueof{\mdp}{s_0} \cdot \probm_{\pmdp,s_0,\zstrat}(s_0 s_1 \cdots s_n \pstates^\omega) \cdot \pprobp(s_n)(s_{n+1}) \\
& =\ \probm_{\mdp,s_0,\zstrat}(\overline{s_0 s_1 \cdots s_n} \states^\omega) \cdot \pprobp(s_n)(s_{n+1}) \cdot \valueof{\mdp}{s_n} && \text{ind.\ hyp.} \\
& =\ \probm_{\mdp,s_0,\zstrat}(\overline{s_0 s_1 \cdots s_n} \states^\omega) \cdot \probp(s_n)(s_{n+1}) \cdot \valueof{\mdp}{s_{n+1}} && \text{def.~of~$\pprobp$} \\
& =\ \probm_{\mdp,s_0,\zstrat}(\overline{s_0 s_1 \cdots s_n s_{n+1}} \states^\omega) \cdot \valueof{\mdp}{s_{n+1}}
\end{align*}
This completes the inductive step, and we have proved item~1.

Towards item~2, define an MDP $\pmdp' = \tuple{\pstates',\pzstates',\prstates',\ptransition',\pprobp'}$ with ``intermediate'' states like $(s,t)$ in~$\pmdp$, but with transition probabilities as in~$\mdp$; more precisely:
\begin{align*}
\pzstates' \ = \ &\zstates \\
\prstates' \ = \ &\rstates \cup \{(s,t) \in \mathord{\transition} \mid s \in \zstates\} \\
\ptransition' \ = \ &\{(s,(s,t)) \in (\zstates \times \mathord\transition) \mid s \transition t\}  \cup (\rstates \times \states) \cup \mbox{} \\ &\{((s,t),t) \in (\mathord\transition \times S) \mid s \in \zstates\} \\
\pprobp'(s,t) \ = \ &\probp(s,t) \\
\pprobp'((s,t),t) \ = \ & 1 
\end{align*}
Then we have 
\begin{equation}
\probm_{\pmdp',s_0,\zstrat}(\playset) \ = \ \probm_{\mdp,s_o,\zstrat}(\overline{\playset}) \qquad \text{for all measurable $\playset \subseteq s_0 (\pstates')^\omega$.}
  \label{eq-intermediate-conditioned-MDP}
\end{equation}
Let $s_0 s_1 \cdots s_n \in s_0 (\pstates')^*$.
If $s_0 s_1 \cdots s_n$ is a partial run in~$\pmdp$, then we have:
\begin{align*}
& \probm_{\pmdp',s_0,\zstrat}(s_0 s_1 \cdots s_n (\pstates')^\omega) \\
&\ = \ \probm_{\mdp,s_0,\zstrat}(\overline{s_0 s_1 \cdots s_n} \states^\omega) && \text{\cref{eq-intermediate-conditioned-MDP}}\\
&\ \ge \ \probm_{\mdp,s_0,\zstrat}(\overline{s_0 s_1 \cdots s_n} \states^\omega) \cdot \valueof{\mdp}{s_n}  \\
&\ = \ \valueof{\mdp}{s_0} \cdot \probm_{\pmdp,s_0,\zstrat}(s_0 s_1 \cdots s_n \pstates^\omega) && \text{item~1} \\
&\ = \ \valueof{\mdp}{s_0} \cdot \probm_{\pmdp,s_0,\zstrat}(s_0 s_1 \cdots s_n (\pstates')^\omega \cap \pstates^\omega)
\end{align*}
Otherwise (i.e., $s_0 s_1 \cdots s_n$ is not a partial run in~$\pmdp$), the same inequality holds trivially.
Invoking \cref{lem:measure-theory} below with $S := \pstates'$ and $s := s_0$ and $\mu(\playset) := \probm_{\pmdp,s_0,\zstrat}(\playset \cap \pstates^\omega)$ and $\mu'(\playset) := \probm_{\pmdp',s_0,\zstrat}(\playset)$ and $x := \valueof{\mdp}{s_0}$ yields
\[
\probm_{\pmdp',s_0,\zstrat}(\playset) 
\ \ge \ 
\valueof{\mdp}{s_0} \cdot \probm_{\pmdp,s_0,\zstrat}(\playset \cap \pstates^\omega) \quad \text{for all measurable } \playset \subseteq s_0 (\pstates')^\omega\,.
\]
By \cref{eq-intermediate-conditioned-MDP}, the first inequality of item~2 follows.

Towards the second inequality of item~2, define $\denotationof{\formula}{s_0}_- \eqdef \{\play \in s_0 (\pstates')^\omega \mid \overline{\play} \in \denotationof{\formula}{s_0}\}$.
If $\probm_{\pmdp',s_0,\zstrat}(s_0 s_1 \cdots s_n (\pstates')^\omega \cap \denotationof{\formula}{s_0}_-) > 0$, then $s_0 s_1 \cdots s_n$ is a partial run in~$\pmdp$ and we have:
\begin{align*}
&\valueof{\mdp}{s_0} \cdot \probm_{\pmdp,s_0,\zstrat}(s_0 s_1 \cdots s_n (\pstates')^\omega \cap \pstates^\omega) \\
& =\ \valueof{\mdp}{s_0} \cdot \probm_{\pmdp,s_0,\zstrat}(s_0 s_1 \cdots s_n \pstates^\omega) \\
& =\ \probm_{\mdp,s_0,\zstrat}(\overline{s_0 s_1 \cdots s_n} \states^\omega) \cdot\valueof{\mdp}{s_n} && \text{item~1} \\
& \ge\ \probm_{\mdp,s_0,\zstrat}(\overline{s_0 s_1 \cdots s_n} \states^\omega) \cdot \probm_{\mdp,s_0,\zstrat}(\denotationof{\formula}{s_0} \mid \overline{s_0 s_1 \cdots s_n} \states^\omega) && \text{$\formula$ is tail} \\
& =\ \probm_{\mdp,s_0,\zstrat}(\overline{s_0 s_1 \cdots s_n} \states^\omega \cap \denotationof{\formula}{s_0}) \\
& =\ \probm_{\pmdp',s_0,\zstrat}(s_0 s_1 \cdots s_n (\pstates')^\omega \cap \denotationof{\formula}{s_0}_-) && \text{\cref{eq-intermediate-conditioned-MDP}}
\end{align*}
Otherwise (i.e., $\probm_{\pmdp',s_0,\zstrat}(s_0 s_1 \cdots s_n (\pstates')^\omega \cap \denotationof{\formula}{s_0}_-) = 0$), the same inequality holds trivially.
Invoking \cref{lem:measure-theory} with $S := \pstates'$ and $s := s_0$ and $\mu(\playset) := \probm_{\pmdp',s_0,\zstrat}(\playset \cap \denotationof{\formula}{s_0}_-)$ and $\mu'(\playset) := \probm_{\pmdp,s_0,\zstrat}(\playset \cap \pstates^\omega)$ and $x := 1 / \valueof{\mdp}{s_0}$ yields
\[
\valueof{\mdp}{s_0} \cdot \probm_{\pmdp,s_0,\zstrat}(\playset \cap \pstates^\omega) 
\ \ge \ 
\probm_{\pmdp',s_0,\zstrat}(\playset \cap \denotationof{\formula}{s_0}_-) \quad \text{for all measurable } \playset \subseteq s_0 (\pstates')^\omega\,.
\]
By \cref{eq-intermediate-conditioned-MDP}, the second inequality of item~2 follows.

Item~3 follows from item~2, with $\overline{\playset} = \denotationof{\formula}{s_0}$.
\end{proof}

The following lemma was used in the preceding proof.

\begin{lemma} \label{lem:measure-theory}
Let $\states$ be countable and $s \in \states$.
Call a set of the form $s w \states^\omega$ for $w \in \states^*$ a \emph{cylinder}.
Let $\mu, \mu'$ be measures on $s S^\omega$ defined in the standard way, i.e., first on cylinders and then extended to all measurable sets $\playset \subseteq s \states^\omega$.
Suppose there is $x \ge 0$ such that $x \cdot \mu(\cyl) \le \mu'(\cyl)$ for all cylinders~$\cyl$.
Then $x \cdot \mu(\playset) \le \mu'(\playset)$ holds for all measurable $\playset \subseteq s S^\omega$.
\end{lemma}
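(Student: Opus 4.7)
The plan is to exploit the outer measure representation of $\mu'$ that comes from Carath\'eodory's extension theorem. Since both $\mu$ and $\mu'$ are defined in the standard way from a pre-measure on cylinders, for every measurable $\playset \subseteq sS^\omega$ one has
\[
\mu'(\playset) \;=\; \inf\left\{\sum_{i=1}^{\infty} \mu'(\cyl_i) \;:\; \cyl_i \text{ cylinders},\; \playset \subseteq \bigcup_i \cyl_i\right\}.
\]
Thus, given any $\eps > 0$, one can choose a countable cover of $\playset$ by cylinders $\cyl_1,\cyl_2,\ldots$ with $\sum_i \mu'(\cyl_i) \le \mu'(\playset) + \eps$.

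Given such a cover, the claim follows from a short chain that combines the hypothesis with monotonicity and countable subadditivity of $\mu$:
\[
x \cdot \mu(\playset) \;\le\; x \cdot \sum_i \mu(\cyl_i) \;=\; \sum_i x \cdot \mu(\cyl_i) \;\le\; \sum_i \mu'(\cyl_i) \;\le\; \mu'(\playset) + \eps.
\]
Letting $\eps \to 0$ yields $x \cdot \mu(\playset) \le \mu'(\playset)$, as required.

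The only point that deserves care is justifying the outer measure representation of $\mu'$: this relies on the fact that the family of cylinders, together with $\emptyset$, forms a semi-algebra generating the Borel $\sigma$-algebra on $sS^\omega$, so that the Carath\'eodory construction produces $\mu'$ precisely as the outer measure of its restriction to cylinders. I would avoid the alternative of attempting a $\pi$--$\lambda$ argument on $\mathcal{D} \eqdef \{R : x\cdot\mu(R) \le \mu'(R)\}$, since inequalities do not interact well with the proper-difference axiom of a Dynkin class (checking $B\setminus A \in \mathcal{D}$ for $A\subseteq B$ in $\mathcal{D}$ would require $\mu' - x\mu$ to be monotone, which is exactly what we are trying to prove). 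The covering argument sidesteps this subtraction issue entirely, and is the main (and essentially only) idea of the proof.
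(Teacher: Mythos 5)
Your proof is correct, but it takes a genuinely different route from the paper. The paper applies the monotone class theorem to the class $\classmon = \{\playset : x\cdot\mu(\playset) \le \mu'(\playset)\}$: it first notes that the algebra generated by the cylinders consists of countable disjoint unions of cylinders (so the inequality propagates from $\classcyl$ to that algebra by countable additivity), and then checks that $\classmon$ is closed under increasing unions and decreasing intersections via continuity of measures from below and above. Your worry about a Dynkin/$\pi$--$\lambda$ argument is well taken --- the proper-difference axiom indeed fails for inequality classes --- but note that the monotone class theorem over an \emph{algebra} avoids exactly that pitfall, since it never requires complements or differences; so the obstruction you identify does not apply to the paper's actual argument. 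Your covering argument is a clean alternative: once you have the outer-measure representation $\mu'(\playset) = \inf\{\sum_i \mu'(\cyl_i)\}$, the conclusion is a two-line computation. The one point to tighten is your justification of that representation: for countably infinite $\states$ the cylinders do not form a semi-algebra in the classical sense, because the complement of a cylinder is a \emph{countable} (not finite) disjoint union of cylinders. This does not break your proof --- ``defined in the standard way'' means $\mu'$ on $\sigma(\classcyl)$ is the Carath\'eodory outer measure restricted to the measurable sets, and since every element of the generated algebra is a countable disjoint union of cylinders, the infimum over cylinder covers coincides with the infimum over algebra covers --- but the appeal should be to this refinement argument (or directly to the uniqueness/approximation part of the extension theorem for finite measures) rather than to a finite semi-algebra structure. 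With that caveat repaired, both proofs are sound; yours is arguably more elementary once the outer-measure fact is granted, while the paper's is self-contained given only continuity of measures and the cited structure of the cylinder algebra.
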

\begin{proof}
Let $\classcyl = \{\cyl \subseteq s \states^\omega \mid \cyl \text{ cylinder}\}$ denote the class of cylinders.
This class generates an algebra $\classcyl_* \supseteq \classcyl$, which is the closure of~$\classcyl$ under finite union and complement.
The classes $\classcyl$ and $\classcyl_*$ generate the same $\sigma$-algebra $\sigma(\classcyl)$.
The class~$\classcyl_*$ is a set of countable disjoint unions of cylinders~\cite[Section~2]{billingsley-1995-probability}.
Hence $x \cdot \mu(\playset) \le \mu'(\playset)$ for all $\playset \in \classcyl_*$.

Define
\[
 \classmon = \{\playset \in \sigma(\classcyl) \mid x \cdot \mu(\playset) \le \mu'(\playset) \}\,.
\]
We have $\classcyl \subseteq \classcyl_* \subseteq \classmon \subseteq \sigma(\classcyl)$.
We show that $\classmon$ is a \emph{monotone} class, i.e., if $\playset_1, \playset_2, \ldots \in \classmon$, then $\playset_1 \subseteq \playset_2 \subseteq \cdots$ implies $\bigcup_i \playset_i \in \classmon$, and $\playset_1 \supseteq \playset_2 \supseteq \cdots$ implies $\bigcap_i \playset_i \in \classmon$.
Suppose $\playset_1, \playset_2, \ldots \in \classmon$ and $\playset_1 \subseteq \playset_2 \subseteq \cdots$.
Then:
\begin{align*}
x \cdot \mu\Big(\bigcup_i \playset_i\Big) 
& = \sup_i x \cdot \mu(\playset_i) && \text{measures are continuous from below} \\
& \le \sup_i \mu'(\playset_i) && \text{definition of~$\classmon$} \\
& = \mu'\Big(\bigcup_i \playset_i\Big) && \text{measures are continuous from below}
\end{align*}
So $\bigcup_i \playset_i \in \classmon$.
Using the fact that measures are continuous from above, one can similarly show that if $\playset_1, \playset_2, \ldots \in \classmon$ and $\playset_1 \supseteq \playset_2 \supseteq \cdots$ then $\bigcap_i \playset_i \in \classmon$.
Hence $\classmon$ is a monotone class.

Now the \emph{monotone class theorem} (see, e.g., \cite[Theorem~3.4]{billingsley-1995-probability}) implies that $\sigma(\classcyl) \subseteq \classmon$, thus $\classmon = \sigma(\classcyl)$.
Hence $x \cdot \mu(\playset) \le \mu'(\playset)$ for all $\playset \in \sigma(\classcyl)$.
\end{proof}

We prove \cref{lem:conditioned-MDP-preserves-structural-transience} from the main body:

\lemconditionedMDPpreservesstructuraltransience*

\begin{proof}
For any state $s_0 \in \pstates \cap \states$, let
\[
 \playset_{s_0} \eqdef \{s_0 s_1 \cdots \in s_0 \pstates^\omega \mid \exists\,i \ge 1: s_0 = s_i\}
\]
denote the event of returning to~$s_0$.
Suppose $\pmdp$ is not universally transient.
By \cref{lem:structural-transience}(3) there exists $s_0 \in \pstates \cap \states$ such that $\valueof{\pmdp,\playset_{s_0}}{s_0} = 1$.
We show that, in~$\mdp$, for any $C>0$ there exists a strategy under which the expected number of returns to~$s_0$ is at least~$C$.
By \cref{lem:structural-transience}(4) this implies that $\mdp$ is not universally transient.

Let $C>0$.
Let $\playset$ be the event, in~$\pmdp$, starting in~$s_0$, of returning to~$s_0$ at least $2 C / \valueof{\mdp,\formula}{s_0}$ times, and denote by $X$ the random variable counting the number of returns to~$s_0$.
Since $\valueof{\pmdp,\playset_{s_0}}{s_0} = 1$, we also have $\valueof{\pmdp,\playset}{s_0} = 1$, and so there exists a strategy~$\zstrat$ with $\probm_{\pmdp, s_0, \zstrat}(\playset) \ge \frac12$.
By the first inequality of \cref{lem:conditioned-construction}.2 we have $\probm_{\mdp,s_0,\zstrat}(\overline{\playset}) \ge \valueof{\mdp,\formula}{s_0} \cdot \frac12$.
It follows:
\[
 \expectval\textstyle_{\mdp,s_0,\zstrat}(X) \ \ge\ \probm_{\mdp,s_0,\zstrat}(\overline{\playset}) \cdot 2 C / \valueof{\mdp,\formula}{s_0} \ \ge\  C \qedhere
\]
\end{proof}

In~\cite[Lemma~6]{KMSW2017} a variant, say~$\mdp_+$, of the conditioned MDP~$\pmdp$ from \cref{def:conditionedmdp} was proposed.
This variant~$\mdp_+$ differs from~$\pmdp$ in that $\mdp_+$ has only those states~$s$ from~$\mdp$ that have an optimal strategy, i.e., a strategy~$\zstrat$ with $\probm_{\mdp,s,\zstrat}(\formula) = \valueof{\mdp}{s}$.
Further, for any transition $s\transition t$ in~$\mdp_+$ where $s$ is a controlled state, we have $\valueof{\mdp}{s} = \valueof{\mdp}{t}$, i.e., $\mdp_+$ does not have value-decreasing transitions emanating from controlled states.

As a consequence, in contrast to~$\pmdp$, in~$\mdp_+$ there is no need for intermediate states of the form $(s,t)$:
Since $\valueof{\mdp}{s} = \valueof{\mdp}{t}$, an intermediate state $(s,t)$ would transition to~$t$ with probability~$1$.
Therefore, such intermediate states do not appear in~$\mdp_+$.
Instead, in~$\mdp_+$ there is a direct transition from~$s$ to~$t$ like in the original MDP~$\mdp$.
As a further consequence, the state~$s_\bot$ does not appear in~$\mdp_+$ (it would not be reachable).

Any strategy~$\zstrat$ in~$\mdp_+$ can be naturally applied also in~$\pmdp$: whenever $\zstrat$ moves from a controlled state~$s$ to a state~$t$ (hence $s$ and~$t$ have the same value), in~$\pmdp$ strategy~$\zstrat$ moves instead to the random state $(s,t)$ (from which $\pmdp$ transitions to~$t$ with probability~$1$).

This correspondence is exploited in the proof of the following lemma from the main body:

\lemoldconditionedMDPpreservesstructuraltransience*
\begin{proof}
Suppose $\mdp$ is universally transient.
We show that $\mdp_+$ is universally transient.
Indeed, let $s_0$ be any state in~$\mdp_+$, and let $\zstrat$ be any strategy in~$\mdp_+$.
Write $\playset$ for the event of returning to~$s_0$ in~$\pmdp$, and $\overline{\playset}$ for the event of returning to~$s_0$ in~$\mdp_+$.
We have $\probm_{\mdp_+,s_0,\zstrat}(\overline{\playset}) = \probm_{\pmdp,s_0,\zstrat}(\playset)$.
Since $\pmdp$ is universally transient by \cref{lem:conditioned-MDP-preserves-structural-transience}, by \cref{lem:structural-transience}(3) this probability is less than~$1$.
Applying \cref{lem:structural-transience}(3) again, it follows that $\mdp_+$ is universally transient.
\end{proof}

\end{document}